\documentclass{scrarticle}
\RequirePackage{amsthm, amsmath, amsfonts, amssymb}
\RequirePackage[authoryear]{natbib}
\RequirePackage[colorlinks, citecolor=blue, urlcolor=blue]{hyperref}
\RequirePackage{graphicx, xcolor}
\usepackage[T1]{fontenc}
\usepackage{lmodern}
\usepackage{bbm}
\usepackage{tikz-cd}
\usepackage[margin=2.7cm]{geometry}
\usetikzlibrary{arrows.meta, positioning, calc}
\usepackage{comment}

\theoremstyle{plain}
\newtheorem{axiom}{Axiom}
\newtheorem{claim}[axiom]{Claim}
\newtheorem{theorem}{Theorem}[section]
\newtheorem{lemma}[theorem]{Lemma}
\newtheorem{coro}[theorem]{Corollary}

\theoremstyle{definition}

\newtheorem{example}[theorem]{Example}

\newtheorem*{remark}{Remark}
\newcommand{\R}{\mathbb{R}}

\newcommand{\Prob}{\mathbb{P}}
\newcommand{\E}{\mathbb{E}}
\DeclareMathOperator{\Var}{Var}
\DeclareMathOperator{\Aut}{Aut}
\DeclareMathOperator{\Cay}{Cay}
\DeclareMathOperator{\im}{im}

\DeclareMathOperator{\ext}{ext}
\DeclareMathOperator{\coker}{coker}

\usepackage{authblk}
\begin{document}
  \title{Topology of Percolation Clusters: Central Limit Theorems beyond the Lattice}

\author[1]{Luciano Henrique Lacerda de Araújo}
\author[1]{Daniel Miranda Machado}
\author[1]{Cristian Favio Coletti}

\affil[1]{Center for Mathematics, Computation and Cognition, Universidade Federal do ABC (UFABC)\\
\texttt{henrique.luciano@aluno.ufabc.edu.br}, \texttt{daniel.miranda@ufabc.edu.br}, \texttt{cristian.coletti@ufabc.edu.br}}

\date{} 

\maketitle
\begin{abstract}
  We prove central limit theorems (CLTs) for topological functionals of
  Bernoulli bond percolation on infinite graphs beyond the Euclidean lattice
  $\mathbb{Z}^{d}$. For quasi-transitive graphs of subexponential
  growth, we show that the number $K_{r}$ of open clusters intersecting
  the metric ball $B_{r}$ satisfies a CLT as $r\to\infty$. For
  amenable Cayley graphs, we prove a general CLT for stationary percolation
  functionals along Følner sequences under sequential stabilization and
  a finite-moment assumption, provided the group admits a left-orderable
  finite-index subgroup. This applies in particular to groups of polynomial
  growth. As an application, we obtain CLTs for Betti numbers of graph-generated
  random simplicial complexes, including clique and neighbor complexes.
  The proofs combine invariant edge orderings, martingale
  decompositions, and stabilization estimates for single-edge perturbations.
  
  \vspace{1em}
  \noindent\textbf{MSC2020:} Primary 60K35; Secondary 60F05, 05C80, 20F65, 60D05, 55N35.
  
  \noindent\textbf{Keywords:} Bernoulli percolation, amenable Cayley graphs, central limit theorem, stabilizing functionals, random simplicial complexes.
\end{abstract}

\section{Introduction}

Bernoulli percolation on infinite graphs has long been studied from
probabilistic, geometric, and group-theoretic perspectives. While fluctuation
results are well understood on the lattice $\mathbb{Z}^{d}$, much less is known
for more general infinite graphs, even under strong symmetry assumptions. In
this paper, we prove central limit theorems for large-scale observables of
Bernoulli bond percolation beyond the lattice setting. Our main examples are
cluster counts in large regions and Betti numbers of simplicial complexes
associated with the percolation subgraph.

Our approach combines martingale decompositions built from invariant orderings
of the edge set with stabilization estimates for single-edge perturbations.
This yields a general CLT for stationary percolation functionals along Følner
sequences, and in particular CLTs for cluster counts and homological
observables. The argument applies to amenable Cayley graphs with a
left-orderable finite-index subgroup; in particular, this includes groups of
polynomial growth.

Let $G=(V,E)$ be an infinite, connected, locally finite graph. Fix
$p\in(0,1)$ and let $\omega\in\{0,1\}^{E}$ be an i.i.d.\ percolation
configuration in which each edge is open with probability $p$. For $r\ge 0$,
write $B_{r}$ for the closed ball of radius $r$ centered at a fixed origin $o$.
Let $G(\omega;B_{r})$ be the random subgraph of $G$ restricted to $B_{r}$, with
vertex set $B_{r}$ and edge set
\[
  \{\,\{x,y\}\in E:\ x,y\in B_{r},\ \omega(\{x,y\})=1\,\}.
\]
Let $K_{r}(\omega)$ denote the number of connected components of
$G(\omega;B_{r})$. In addition to $K_{r}$, we consider a broader class of
observables on amenable Cayley graphs: stationary functionals evaluated along
an exhaustive Følner sequence $(A_{r})$ and satisfying a stabilization
property under single-edge perturbations. Finally, given a local rule that
assigns to each finite subgraph $H\subseteq G$ a simplicial complex
$\Delta(H)$, we study the $n$-th Betti numbers $\beta_{n}^{r}$ of
$\Delta(G(\omega;A_{r}))$.

We prove three main results.

\smallskip
\noindent
\textbf{(1) CLT for the number of clusters on quasi-transitive graphs.}
If $G$ is quasi-transitive and of subexponential growth, then the number of
connected components $K_{r}$ of the restricted percolation subgraph
$G(\omega;B_{r})$ satisfies a CLT after centering and normalization by its
variance (Theorem~\ref{teo:perc}). This extends Zhang's martingale method from
$\mathbb{Z}^{d}$ to quasi-transitive graphs of subexponential growth, replacing
lattice translations with averages over large sets.

\smallskip
\noindent
\textbf{(2) A CLT for stabilizing percolation functionals on Cayley graphs.}
Let $G=\Cay(\Gamma,S)$ be an amenable Cayley graph. We prove a CLT for
stationary percolation functionals along an exhaustive Følner sequence under
sequential stabilization and a finite-moment assumption
(Theorem~\ref{TLC_func}). We then show that the required left-orderability
hypothesis holds for every group of polynomial growth
(Theorem~\ref{TLC_funcionais_polinomial}), and hence for virtually nilpotent
Cayley graphs.

\smallskip
\noindent
\textbf{(3) CLTs for Betti numbers of graph-generated random complexes.}
As an application, we obtain CLTs for the Betti numbers of local simplicial
complexes generated from the percolation subgraph, including the clique and
neighbor complexes (Theorem~\ref{TLC_hom}). The main input is a uniform local
bound on the effect of changing a single edge on $\beta_{n}$.

\medskip
\noindent
\textbf{Related work.}
For $\mathbb{Z}^{d}$, asymptotic results and CLTs for the number and size of
percolation clusters were obtained in
\cite{CoxGrimmett1984,KestenZhang1990,SugimineTakei2006}; see also
\cite{GrimmettBook}. For percolation on transitive and Cayley graphs, see
\cite{BenjaminiSchramm1996}. The martingale methods used here go back to
McLeish \cite{mcleish1974dependent} and to the work of Kesten and Zhang
\cite{KestenZhang1997,zhang2001martingale}. Theorem~\ref{teo:perc} extends
this circle of ideas to quasi-transitive graphs of subexponential growth.

Central limit theorems for stabilizing functionals were developed in
stochastic geometry by Penrose \cite{Penrose2001CLT} and in later works such
as \cite{PenrosePeres2011,LRS19}; related results for quasi-local statistics
on Cayley graphs appear in \cite{AVY18}. Our Theorem~\ref{TLC_func} is in the
same spirit, but is tailored to Bernoulli percolation on amenable Cayley
graphs through invariant edge orderings and averaging along Følner sequences.

Limit theorems for topological invariants of random complexes are known in
several Euclidean and mean-field models; see, for example,
\cite{Kahle2009,BobrowskiKahle2018,KM13,YSA17,HST18,KH22,KP25,OT21,OS25}.
Here we treat clique-type complexes generated by Bernoulli percolation on
Cayley graphs and prove CLTs for the Betti numbers $\beta_{n}$ along
increasing windows.

\medskip
\noindent
\textbf{Organization of the paper.}
Section~\ref{sec:prel} introduces the graph-theoretic and probabilistic setup,
including amenability and Følner sequences, the percolation model, and the
local homological constructions we consider. Section~\ref{sec:main} states the
main theorems. The proof of the CLT for the number of clusters
(Theorem~\ref{teo:perc}) is given in Section~\ref{sec:clusters}.
Section~\ref{sec:functionals} establishes the general CLT for stabilizing
functionals on amenable Cayley graphs (Theorem~\ref{TLC_func}) and proves the
polynomial-growth criterion (Theorem~\ref{TLC_funcionais_polinomial}).
Finally, Section~\ref{sec:homology} applies the functional theorem to Betti
numbers and proves Theorem~\ref{TLC_hom}.

  \section{Preliminaries: Graphs, Percolation and Homology}
  \label{sec:prel} In this section, we introduce the graph-theoretic framework,
  the percolation model, and the homological background required for our results.

  \subsection{Graph Structure and Geometry}

  Throughout this paper, $G=(V,E)$ denotes an infinite, connected, locally finite
  graph. We denote the set of neighbors of a vertex $x$ in $G$ by $N_{G}(x)$. We
  also use the shorthand $N(x):=N_{G}(x )$. Given a subset $A \subset V$, let $G
  [A]$ denote the induced subgraph of $G$ with vertex set $A$ and edge set
  \[
    E(A) :=\{\{u,v\} \in E : u,v \in A\}.
  \]

  Fix an origin $o \in V$ and let $d_{G}$ denote the standard graph distance (shortest-path
  metric) on $V$. The closed ball (with respect to $d_{G}$) of radius $r\ge 0$
  centered at $u\in V$ is
  \[
    B_{r}(u) := \{v \in V : d_{G}(u,v) \leq r\},
  \]
  and we write $B_{r}:= B_{r}(o)$ for the ball centered at the origin.

  Our results depend on the asymptotic geometry of the graph. We say that $G$
  has:
  \begin{itemize}
    \item \emph{Polynomial growth} if there exist constants $C>0$ and
      $D\ge 1$ such that
      \[
        |B_{r}(v)| \le C r^{D}\qquad \text{for all }r\ge 1 \text{ and all
        }v\in V.
      \]

    \item \emph{Subexponential growth} if
      \[
        \lim_{r\to\infty}\frac{1}{r}\log |B_{r}(v)| = 0 \qquad \text{for
        all }v\in V.
      \]

    \item \emph{Exponential growth} if
      \[
        \liminf_{r\to\infty}\frac{1}{r}\log |B_{r}(v)| > 0 \qquad \text{for
        all }v\in V.
      \]
  \end{itemize}

  Note that polynomial growth is a strict subcase of subexponential growth. In
  quasi-transitive graphs, these asymptotic growth classes do not depend on
  the choice of the reference vertex $v$, allowing us to fix an origin $o\in V$
  and simply write $B_{r}:= B_{r}(o)$.

  The \emph{$\mathbb{Z}^{d}$ lattice} is a classical example of a graph with
  polynomial growth. Further examples can be found in section ``A. Generalized
  lattices'' in \cite{woess2000random}. \emph{Regular trees} and the Cayley
  graph generated by the \emph{lamplighter group} are examples of graphs with exponential
  growth that are, respectively, non-amenable and amenable; see Section~3.4 on
  Cayley graphs in \cite{lyons2017probability}. In \cite{grigorchuk1985degrees},
  it was proven that the \emph{Grigorchuk group} has subexponential growth but
  grows faster than any polynomial, answering Milnor's question
  \cite{milnor1968} on the existence of groups with such growth
  characteristics.

  For $W \subset V$, we define the (inner) \emph{vertex boundary} and the
  \emph{edge boundary}, respectively, by
  \[
    \partial_{V}W := \{\, u \in W : \exists v \in V \setminus W \text{ with }
    \{u,v\} \in E \,\}
  \]
  and
  \[
    \partial_{E}W := \{\, \{u,v\} \in E : u \in W,\ v \in V \setminus W \,\}.
  \]
  Unless otherwise specified, we write $\partial W$ as shorthand for
  $\partial_{V}W$.

  The \emph{isoperimetric constant} (or \emph{Cheeger constant}) of $G$ is
  given by
  \[
    \Phi(G) := \inf \left\{ \frac{|\partial W|}{|W|}: \varnothing \neq W \subset
    V,\ |W| < \infty \right\}.
  \]
  The graph $G$ is \emph{amenable} if $\Phi(G) = 0$, and \emph{non-amenable} otherwise.
  In bounded-degree quasi-transitive graphs, $|\partial_{V}W|$ and $|\partial_{E}
  W|$ are comparable up to constants, uniformly over finite $W$: there exist constants
  $c_{1},c_{2}>0$ such that
  \[
    c_{1}\,|\partial_{E}W| \le |\partial_{V}W| \le c_{2}\,|\partial_{E}W| \qquad
    \text{for all finite }W \subset V,
  \]
  so either boundary notion can be used in the definition of $\Phi(G)$.

  A classical characterization states that $G$ is amenable if and only if it admits
  a \emph{F{\o}lner sequence}: a sequence of finite, non-empty subsets
  $(A_{r})_{r \geq 1}$ of $V$ such that
  \begin{equation*}
    \lim_{r \to \infty}\frac{|\partial A_{r}|}{|A_{r}|}= 0.
  \end{equation*}
  We say that the F{\o}lner sequence $(A_{r})_{r \geq 1}$ is \emph{exhaustive}
  if $A_{r}\subseteq A_{r+1}$ for all $r \geq 1$ and $\bigcup_{r=1}^{\infty}A_{r}
  = V$.

  \subsection{Symmetries and Cayley Graphs}
  The algebraic structure of Cayley graphs is essential for our later results,
  while Theorem \ref{teo:perc} holds in the more general quasi-transitive
  setting. Let $\Gamma$ be a finitely generated group. We say that $G = (V,E)$
  is a \emph{Cayley graph} of $\Gamma$ if there exists a fixed, finite, symmetric
  generating set $S$ (where $S = S^{-1}$ and $1 \notin S$) of $\Gamma$ such that
  $V=\Gamma$, and the edge set is defined by the group action: $E := \{\{g,gs\}
  : g \in \Gamma, s \in S\}$. In this case, we denote $G = \Cay(\Gamma, S)$.
  Throughout this paper, we assume $\Gamma$ to be infinite.

  In the context of Cayley graphs, amenability can be equivalently
  characterized by the algebraic F{\o}lner condition. Specifically, a finitely
  generated group $\Gamma$ is amenable (and hence, so is any Cayley graph
  generated by it) if and only if there exists a sequence of finite, non-empty
  subsets $(A_{r})_{r \geq 1}$ of $\Gamma$ such that for every $g \in \Gamma$,
  \[
    \lim_{r \to \infty}\frac{|gA_{r}\mathbin{\triangle}A_{r}|}{|A_{r}|}= 0,
  \]
  where $\mathbin{\triangle}$ denotes the symmetric difference.

  The symmetry of a Cayley graph is generalized to arbitrary graphs via their automorphism
  groups. An automorphism of a graph $G$ is an adjacency-preserving bijection
  $\varphi: V \to V$. The set of all such bijections forms the \emph{automorphism
  group}, denoted by $\Aut(G)$. The orbit of a vertex $v \in V$ under this
  group is $\mathcal{O}_{v}:= \{\varphi(v) :\varphi \in \Aut(G)\}$. We say that
  $G$ is \emph{vertex-transitive} if it has a single orbit, and \emph{quasi-transitive}
  if the action of $\Aut(G)$ on $V$ has finitely many orbits. Throughout this paper,
  we restrict our attention to graphs that are transitive or quasi-transitive.
  If $G$ is quasi-transitive, then the growth rate does not depend on the
  choice of the reference vertex; hence we measure growth from the fixed origin
  $o$.

  Given an action of a group $H$ on $E$, a subset $\tilde{E}\subseteq E$ is
  called an \emph{$H$-fundamental set of edges} for this action if for every $e
  \in E$, there exists $h \in H$ such that $h\cdot e \in \tilde{E}$. Note that
  this $h$ need not be unique: as we shall see, depending on the choice of $H$
  and $\tilde{E}$, an edge $e \in E$ might be mapped into $\tilde{E}$ by both
  $h$ and $h^{-1}$.

  For a Cayley graph, given $v \in V$, let $\varphi_{v}:V \to V$ denote the
  left-translation automorphism $\varphi_{v}(u) = v\cdot u$. Given a F{\o}lner
  sequence $(A_{r})_{r \geq 1}$ in $V$, we call the \emph{F{\o}lner sequence
  orbit} the set
  \[
    \mathcal{A}:= \{vA_{r}: v \in V, r \geq 1\},
  \]
  where $vA_{r}= \{\varphi_{v}(u) : u \in A_{r}\}$.

  Finally, we consider groups admitting a particular invariant structure. A group
  $\Gamma$ is \emph{left-orderable} (LO) if it admits a strict total ordering
  $<$ that is left-invariant: $g < h \implies kg < kh$ for all $g,h,k \in \Gamma$.
  For a comprehensive treatment of the properties and characterizations of LO
  groups, we refer the reader to \cite{clay2023orderable}.

  \subsection{Percolation and Functionals}
  We consider Bernoulli bond percolation on $G$ with parameter $p \in (0,1)$.
  Let $\Omega := \{0,1\}^{E}$ be the configuration space, equipped with the product
  measure $\Prob_{p}$ under which edges are declared ``open'' ($\omega( e) =1$)
  independently with probability $p$. We denote by $\E_{p}$ and $\Var_{p}$ the
  corresponding expectation and variance.

  Let $E(\omega) := \{e \in E: \omega(e) = 1\}$ be the set of open edges, which
  defines the random subgraph $G(\omega) := (V,E(\omega))$. The connected
  component of a vertex $v$ in the configuration $\omega$ is called the \emph{cluster}
  of $v$, denoted by $\mathcal{C}(\omega; v)$. When $\omega$ is clear, we
  write $\mathcal{C}(v)$. The \emph{phase transition} of the model is
  characterized by the \emph{critical parameter}:
  \[
    p_{c}:=\sup\bigl\{p\in[0,1]:\Prob_{p}\bigl(|\mathcal{C}(\omega;o)|=\infty
    \bigr)=0\bigr\}.
  \]
  If $G$ is quasi-transitive, this definition does not depend on the choice of
  $o$.

  The process is said to be in the \emph{subcritical phase} if $p < p_{c}$ and
  in the \emph{supercritical phase} if $p > p_{c}$.

  Given a subset $A \subseteq V$, we denote by $G(\omega;A)$ the random
  subgraph $(A, E(\omega)\cap E(A))$, and by $\mathcal{C}_{A}(\omega;v)$ the cluster
  of $v$ restricted to $G(\omega;A)$. We let $K_{r}(\omega)$ denote the number
  of connected components of the restricted random subgraph $G(\omega;B_{r})$.

  While the number of clusters is a fundamental quantity, our analysis extends
  to a broader class of geometric observables in amenable Cayley graphs. Let
  $( A_{r})_{r\geq 1}$ be a F{\o}lner sequence and let $\mathcal{A}$ be its
  orbit. An \emph{$\mathcal{A}$-functional} is a measurable map
  $F:\Omega\times\mathcal{A}\to\mathbb{R}$. When $\omega$ is clear from the context,
  we write $F(A):=F(\omega,A)$.

  We require the functional $F$ to be compatible with the underlying
  symmetries and to satisfy stabilization assumptions under local
  perturbations. The automorphism $\varphi_{u}$ acts on edges by
  \[
    \varphi_{u}(\{x,y\})=\{\varphi_{u}(x),\varphi_{u}(y)\}.
  \]
  We define the induced action on configurations by
  \[
    (\tilde\varphi_{u}\omega)(e):=\omega(\varphi_{u}^{-1}e),\qquad e\in E.
  \]
  That is, the map $\tilde\varphi_{u}:\Omega\to\Omega$ translates the
  configuration $\omega$ by the automorphism $\varphi_{u}$.

  An $\mathcal{A}$-functional $F$ is called \emph{stationary} if it is invariant
  under the diagonal action of any translation; that is, for any $u \in \Gamma$
  and $A \in \mathcal{A}$,
  \[
    F(\omega, A) = F(\tilde{\varphi}_{u}(\omega), \varphi_{u}(A)) \quad \Prob
    _{p}\text{-a.s.}
  \]

  We quantify the impact of a single edge on the functional via the difference operators.
  For an edge $e\in E$, we set
  \[
    D_{e}(\omega,A):=F(\omega,A)-F(\omega^{e},A),
  \]
  where $\omega^{e}$ is obtained from $\omega$ by replacing the state of $e$ with
  an independent copy; that is, $\omega^{e}(e)$ is independent of $\omega(e)$,
  and $\omega^{e}(e')=\omega(e')$ for $e'\neq e$. We say that an $\mathcal{A}$-functional
  $F$ \emph{stabilizes in sequence} in $K \subseteq E$ if, for any edge $e \in
  K$, there exists a random variable $D^{\infty}_{e}$ such that, for any $x \in
  V$, the sequence of random variables $D_{e}(\omega , xA_{r})$ converges in
  probability to a limit $D^{\infty}_{e}(\omega)$ as $r \to \infty$. In particular,
  this limit does not depend on $x$.

  We say that $F$ satisfies the \emph{finite moment condition} in $K$ if there
  exists $\gamma >2$ such that
  \[
    \max_{e\in K}\ \sup_{A\in\mathcal{A}}\ \E_{p}\bigl[|D_{e}(\omega,A)|^{\gamma}
    \bigr]<\infty.
  \]

  The number of clusters, the number of isolated vertices, and the total
  perimeter of clusters are three examples of stationary $\mathcal{A}$-functionals
  in this setting. These functionals are clearly stationary by definition.
  Moreover, they satisfy the stabilization and moment conditions, since
  the effect of a single-edge perturbation on the functional is localized within a random, finite radius.

  \subsection{Simplicial Homology of Graphs}
  A primary application of this functional framework is the study of random
  graph homology. A \emph{simplicial complex} on a vertex set $V$ is a
  collection $\Delta \subseteq \mathcal{P}(V)$, the power set of $V$, such
  that if $A \in \Delta$ and $B \subseteq A$, then $B \in \Delta$. The
  elements of $\Delta$ are called \emph{simplices}. If $\sigma$ is a simplex, its
  \emph{dimension} is defined as $\dim \sigma = |\sigma| - 1$. A simplex of dimension
  $n$ is referred to as an \emph{$n$-simplex}. Given a simplicial complex $\Delta
  = \{\sigma_{\alpha}\}_{\alpha \in \Lambda}$, a \emph{simplicial subcomplex} is
  a sub-collection
  \[
    \Delta^{\prime}= \{\sigma_{\alpha}\}_{\alpha \in \mathcal{B}},
  \]
  where $\mathcal{B}\subseteq \Lambda$ is such that $\Delta^{\prime}$ itself satisfies
  the structure of a simplicial complex.

  Two simplicial complexes $\Delta$ and $\Delta^{\prime}$ are said to be \emph{isomorphic}
  if there exists a bijection $\varphi: \Delta \to \Delta^{\prime}$ that
  preserves both inclusion and dimension; that is, for any
  $\sigma, \rho \in \Delta$, $\sigma \subseteq \rho \iff \varphi(\sigma) \subseteq
  \varphi(\rho)$, and for every $\sigma \in \Delta$, $\dim \sigma = \dim \varphi
  (\sigma)$. In this case, we denote the isomorphism by $\Delta \simeq \Delta^{\prime}$.

  Given a graph $G = (V, E)$, a \emph{graph simplicial complex} on $G$ is a mapping
  $\Delta$ that assigns to each subgraph $H = (W, F)$ a simplicial complex $\Delta
  (H) \subseteq \mathcal{P}(W)$ such that for any $\varphi\in\Aut(G)$ and any subgraph
  $H\subseteq G$,
  \[
    \Delta(\varphi(H))=\varphi(\Delta(H)),
  \]
  where $\varphi(\Delta(H)):=\{\varphi(\sigma):\sigma\in\Delta(H)\}$ and
  $\varphi (\sigma):=\{\varphi(v):v\in\sigma\}$. (\emph{Equivariance property}.)
  To illustrate, we give some examples:

  \begin{example}
    A set of vertices $\sigma = \{v_{0}, \dots, v_{n}\}$ is a \emph{simplex
    of cliques} of $G$ if $\{v_{i}, v_{j}\} \in E$ for all distinct
    $i, j \in \{0, \dots, n\}$. The \emph{clique simplicial complex}
    $\Delta(G)$ is the collection of all such simplices. Figure \ref{fig:simplex_cliques_graph}
    shows examples of simplices of cliques.
  \end{example}

  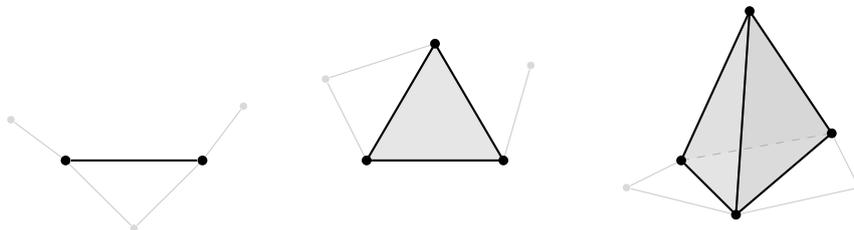
\begin{figure}[htbp]
    \centering
    \begin{tikzpicture}[
      scale=1.8,
      main node/.style={circle, fill=black, inner sep=1.3pt},
      ext node/.style={circle, fill=gray!30, inner sep=1pt},
      main edge/.style={thick},
      graph edge/.style={thin, gray!40},
      hidden edge/.style={dashed, gray!60, thin}
    ]
      \begin{scope}[xshift=0cm]
        \node[ext node] (E1) at (-0.4, 0.3) {};
        \node[ext node] (E2) at (0.5, -0.5) {};
        \node[ext node] (E3) at (1.3, 0.4) {};

        \draw[graph edge] (E1) -- (0,0);
        \draw[graph edge] (E2) -- (0,0);
        \draw[graph edge] (E2) -- (1,0);
        \draw[graph edge] (E3) -- (1,0);

        \node[main node] (A1) at (0,0) {};
        \node[main node] (A2) at (1,0) {};
        \draw[main edge] (A1) -- (A2);
      \end{scope}

      \begin{scope}[xshift=2.2cm]
        \coordinate (B1) at (0,0);
        \coordinate (B2) at (1,0);
        \coordinate (B3) at (0.5,0.86);

        \node[ext node] (E4) at (-0.3, 0.6) {};
        \node[ext node] (E5) at (1.2, 0.7) {};

        \draw[graph edge] (E4) -- (B1);
        \draw[graph edge] (E4) -- (B3);
        \draw[graph edge] (E5) -- (B2);

        \fill[gray!20] (B1) -- (B2) -- (B3) -- cycle;
        \draw[main edge] (B1) -- (B2) -- (B3) -- cycle;
        \node[main node] at (B1) {};
        \node[main node] at (B2) {};
        \node[main node] at (B3) {};
      \end{scope}

      \begin{scope}[xshift=4.5cm]
        \coordinate (C1) at (0,0);
        \coordinate (C2) at (1.1,0.2);
        \coordinate (C3) at (0.5,1.1);
        \coordinate (C4) at (0.4,-0.4);

        \node[ext node] (E6) at (-0.4, -0.2) {};
        \node[ext node] (E7) at (1.3, -0.2) {};

        \draw[graph edge] (E6) -- (C1);
        \draw[graph edge] (E6) -- (C4);
        \draw[graph edge] (E7) -- (C2);
        \draw[graph edge] (E7) -- (C4);

        \fill[gray!10] (C1) -- (C2) -- (C3) -- cycle;
        \fill[gray!30, opacity=0.7] (C1) -- (C3) -- (C4) -- cycle;
        \fill[gray!40, opacity=0.7] (C2) -- (C3) -- (C4) -- cycle;

        \draw[hidden edge] (C1) -- (C2);
        \draw[main edge] (C1) -- (C3);
        \draw[main edge] (C1) -- (C4);
        \draw[main edge] (C2) -- (C3);
        \draw[main edge] (C2) -- (C4);
        \draw[main edge] (C3) -- (C4);

        \node[main node] at (C1) {};
        \node[main node] at (C2) {};
        \node[main node] at (C3) {};
        \node[main node] at (C4) {};
      \end{scope}
    \end{tikzpicture}
    \caption{Realization of a $1$-simplex, a $2$-simplex and a $3$-simplex of
    cliques highlighted in a graph.}
    \label{fig:simplex_cliques_graph}
  \end{figure}

  \begin{example}
    We say that $\sigma$ is a \emph{neighbor simplex} of the graph $G$ if
    $\sigma \subseteq \{v\} \cup N_{G}(v)$, for some $v \in V$. The \emph{neighbor
    simplicial complex} of $G$ is then defined as the collection of all such
    neighbor simplices in $G$.
  \end{example}

  \begin{example}[Weighted Vietoris--Rips complexes]
    Let $G=(V,E)$ be a locally finite, quasi-transitive graph with
    automorphism group $\Gamma\leq \Aut(G)$. Since $\Gamma$ has finitely many
    edge-orbits, any assignment of positive values to these orbits induces a
    $\Gamma$-invariant weight function $w\colon E\to(0,\infty)$. For a subgraph
    $H\subseteq G$, let $d_{H,w}$ denote the weighted path distance on
    $V(H)$,
    \[
      d_{H,w}(u,v):=\inf\Big\{\sum_{e\in P}w(e): P \text{ a path in }H \text{
      from }u \text{ to }v\Big\}.
    \]
    Fix $r>0$. The \emph{(weighted) Vietoris--Rips complex} $\Delta_{r}(H)$ is
    the simplicial complex on $V(H)$ whose simplices are the finite sets $\sigma
    \subseteq V(H)$ such that $d_{H,w}(u,v)\le r$ for all $u,v\in \sigma$.
  \end{example}

  \begin{example}[Weighted graph-\v{C}ech complexes]
    In the setting of Example~2.3, fix $r>0$. The \emph{(weighted) graph-\v{C}ech
    complex} $\check\Delta_{r}(H)$ is the simplicial complex on $V(H)$ whose
    simplices are the finite sets $\sigma\subseteq V(H)$ for which there
    exists $z\in V(H)$ such that $d_{H,w}(z,x)\le r$ for all $x\in \sigma$.
  \end{example}

  \begin{example}[Plaquette complexes]
    Let $G$ be a Cayley graph with a $\Gamma$-invariant collection $\mathcal{P}$
    of cycles (plaquettes) of length at most $k$. For any subgraph
    $H\subseteq G$, the \emph{plaquette complex} $\Delta_{\mathcal{P}}(H)$
    is the simplicial complex generated by the vertex sets of all subgraphs
    of $H$ that are isomorphic to an element of $\mathcal{P}$.
  \end{example}

  \begin{example}[Path complexes]
    Fix $k\ge 1$. For a subgraph $H\subseteq G$, the \emph{path complex}
    $\Delta^{\mathrm{path}}_{k}(H)$ is the simplicial complex generated by
    the vertex sets of simple paths in $H$ of length at most $k$.
  \end{example}

  We say that $\Delta$ is a \emph{local graph simplicial complex rule} over
  $G$ if it assigns to each subgraph $H\subseteq G$ a simplicial complex
  $\Delta(H)$ satisfying:

  \begin{itemize}
    \item \emph{Monotonicity:} If $H\subseteq K$ are subgraphs of $G$, then $\Delta
      (H)\subseteq \Delta(K)$.

    \item \emph{Connectivity:} For every subgraph $H\subseteq G$ and every
      simplex $\sigma\in\Delta(H)$, all vertices of $\sigma$ lie in a single
      connected component of $H$.

    \item \emph{Confinement:} There exists $T\ge0$ such that for every
      subgraph $H\subseteq G$ and every simplex $\sigma\in\Delta(H)$, there
      exists a non-empty subgraph $H_{\sigma}\subseteq H$ with
      \[
        \sup_{u,v\in V(H_\sigma)}d_{G}(u,v)\le T \quad\text{and}\quad \sigma
        \in\Delta(H_{\sigma}).
      \]
  \end{itemize}
  The smallest $T$ with this property is called the \emph{basic diameter} of the
  simplicial complex. Any subgraph $H$ that minimizes
  $\sup_{u,v \in H}d_{G}(u,v)$ among all subgraphs satisfying
  $\sigma \in \Delta(H)$ is called a minimal subgraph associated with $\sigma$.
  Throughout this paper, we restrict our focus to local simplicial complexes. Note
  that the examples of simplicial complexes presented above satisfy the locality
  requirements.

  An \emph{ordered simplex} is a simplex with a fixed ordering of its vertices.
  Two orderings are considered equivalent if one can be obtained from the other
  by an even number of permutations. This equivalence relation defines two
  classes, referred to as the \emph{orientations} of a simplex. A negative sign
  preceding a simplex indicates the reversal of its orientation. Henceforth, oriented
  simplices will be referred to simply as simplices, and the notation
  $[v_{0}, \dots, v_{n}]$ denotes the simplex $\{v_{0}, \dots, v_{n}\}$
  equipped with an orientation.

  Given a finite subgraph $K$ of $G$, let $C_{n}(K)$ be the set of $n$-simplices
  in $\Delta(K)$, and let $S_{n}(K)$ denote the $n$-th \emph{chain space},
  defined as the $\mathbb{R}$-linear span of the oriented $n$-simplices
  associated with $C_{n}(K)$. If $K$ contains no $n$-simplices, $S_{n}(K)$ is
  defined as the trivial vector space. For our purposes, we will define
  homology with respect to a fixed sequence of finite subsets of $V$, denoted $(
  A_{r})_{r \geq 1}$. When the sequence in question is clear from context, we will
  simplify the notation to $C_{n}^{r}(G)$ or $C_{n}^{r}$ to indicate $C_{n}(G[A
  _{r}])$, and $S_{n}^{r}(G)$ or $S_{n}^{r}$ to indicate $S_{n}(G[A_{r}])$.

  The \emph{boundary operator} is defined in the standard way: the $n$-th boundary
  homomorphism $\partial_{n}^{r}\colon S_{n}^{r}\to S_{n-1}^{r}$ acts on oriented
  simplices $[v_{0}, \dots, v_{n}]$ via
  \[
    \partial_{n}^{r}([v_{0}, \dots, v_{n}]) := \sum_{i=0}^{n}(-1)^{i}[v_{0},
    \dots, \widehat{v}_{i}, \dots, v_{n}],
  \]
  and extended linearly, where the hat symbol $\widehat{v}_{i}$ denotes the
  omission of the $i$-th vertex. The $n$-th \emph{simplicial homology group}
  is defined as the quotient space
  \[
    H_{n}^{r}:= \frac{\ker \partial^{r}_{n}}{\operatorname{im}\partial^{r}_{n+1}}
    ,
  \]
  and the $n$-th \emph{Betti number} is its dimension,
  $\beta_{n}^{r}:= \dim H_{n}^{r}$. An element of $H_{n}^{r}$ is a \emph{homology
  class}, and any cycle $z \in \ker \partial_{n}^{r}$ representing such a class
  is called a \emph{homology representative}. We say that a chain $\sum_{j \in
  J}\rho_{j}\sigma_{j}\in S_{n}^{r}(G)$ is a \emph{connected chain} if, for any
  $k, l \in J$, the simplices $\sigma_{k}$ and $\sigma_{l}$ belong to the same
  connected component of the graph. If such a chain constitutes a homology generator,
  it is referred to as a \emph{connected homology generator}.

  Given finite subgraphs $K$ and $L$ of $G$ such that $K \subseteq L$, the
  inclusion $i: K \hookrightarrow L$ induces a homomorphism in homology. By the
  monotonicity property, $\Delta(K) \subseteq \Delta(L)$, which implies that $S
  _{n}(K)$ is a vector subspace of $S_{n}(L)$ and that the inclusion commutes with
  the boundary operators. We define the \emph{induced homomorphism in homology}
  $i_{*}^{n}: H_{n}(K) \to H_{n}(L)$ by
  \[
    z + \partial_{n+1}(S_{n+1}(K)) \mapsto z + \partial_{n+1}(S_{n+1}(L)),
  \]
  where $z \in \ker \partial_{n}(K)$ is an $n$-cycle. This induced
  homomorphism is well-defined. Indeed, the monotonicity property guarantees
  that $\partial_{n+1}(S_{n+1}(K)) \subseteq \partial_{n+1}(S_{n+1}(L))$. So,
  if two cycles represent the same homology class in $H_{n}(K)$, their difference
  is a boundary in $K$. Since this boundary is the image of an element in
  $S_{n+1}(K)$, it is also the image of that same element in $S_{n+1}(L)$,
  ensuring that the mapping is independent of the choice of representative.

  \section{Main Results}
  \label{sec:main} In the following theorems, $\overset{d}{\Longrightarrow}$ denotes
  convergence in distribution. Our first result adapts the martingale method
  of \cite{zhang2001martingale}, which established a Central Limit Theorem (CLT)
  for $K_{r}$ in $\mathbb{Z}^{d}$.

  \begin{theorem}
    \label{teo:perc} Let $G$ be an infinite, connected, locally finite, and
    quasi-transitive graph of subexponential growth. Consider Bernoulli bond
    percolation with parameter $p \in(0,1)$. Then, as $r \to \infty$,
    \[
      \frac{K_{r}- \E_{p}[K_{r}]}{\sqrt{\Var_{p}(K_{r})}}\overset{d}{\Longrightarrow}
      \mathcal{N}(0,1).
    \]
  \end{theorem}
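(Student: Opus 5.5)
We follow Zhang's martingale approach and replace the lattice translations of $\mathbb{Z}^{d}$ with automorphisms of $G$ and averages over large sets.

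\textbf{Martingale decomposition.} Enumerate the edges of $E(B_{r})$ as $e_{1},\dots,e_{m}$, $m=|E(B_{r})|$, and let $\mathcal{F}_{k}=\sigma(\omega(e_{1}),\dots,\omega(e_{k}))$. Writing
\[
K_{r}-\E_{p}[K_{r}]=\sum_{k=1}^{m}\Delta_{k},\qquad \Delta_{k}:=\E_{p}[K_{r}\mid\mathcal{F}_{k}]-\E_{p}[K_{r}\mid\mathcal{F}_{k-1}],
\]
gives a martingale difference array. Since opening or closing one edge changes the number of components by at most one, $|\Delta_{k}|\le 1$. We then invoke McLeish's CLT for martingale difference arrays \cite{mcleish1974dependent}. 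After normalizing by $\sigma_{r}:=\sqrt{\Var_{p}(K_{r})}$, it suffices to check:
\begin{itemize}
\item[(i)] $\max_{k}|\Delta_{k}|/\sigma_{r}\to 0$ in $L^{2}$;
\item[(ii)] $\sigma_{r}^{-2}\sum_{k}\Delta_{k}^{2}\to 1$ in probability.
\end{itemize}

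\textbf{Variance lower bound.} Condition (i) follows from $|\Delta_{k}|\le1$ once we show $\Var_{p}(K_{r})\ge c|B_{r}|$. To get this, observe that $\E[\Delta_{k}^{2}]$ is at least a constant times the probability that $e_{k}$ is \emph{pivotal for the count}, i.e.\ its endpoints are not otherwise connected in $G(\omega;B_{r})$. For an edge whose endpoints are both isolated apart from $e_{k}$, this probability is bounded below by $(1-p)^{2\deg_{\max}}$, using local finiteness and the bounded degree that quasi-transitivity gives. Orthogonality of martingale increments then gives $\Var_{p}(K_{r})=\sum_{k}\E[\Delta_{k}^{2}]\ge c\,|E(B_{r})|\ge c|B_{r}|$.

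\textbf{Quadratic variation.} Condition (ii) is the main obstacle. In Zhang's argument each $\Delta_{k}$ is approximated, up to a small error, by a local quantity $\E[D_{e_{k}}^{\infty}\mid \mathcal{F}_{k}^{\infty}]$. Here $D_{e}^{\infty}$ is the change in the number of clusters (per vertex, via the Burton--Keane style limit) in the infinite graph, and the filtration is generated by edges preceding $e$ in an ordering. I would use the following plan.

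\begin{itemize}
\item[(a)] Choose the enumeration of $E(B_{r})$ as the restriction of a single fixed ordering of $E$ that is equivariant along each of the finitely many edge orbits. This makes the "past" of an edge look locally the same for all edges of one orbit, up to automorphism.
\item[(b)] Show that for edges $e$ at distance $\ge R$ from $\partial B_{r}$, $\Delta_{k}$ is close in $L^{2}$ to a stationary variable $Y_{e}$, with error $\epsilon(R)\to 0$. The key input is that in the finite window, the effect of $e$ stabilizes: the event that its endpoints are connected inside $B_{r}$ but not within $B_{R}(e)$ has probability tending to $0$. This holds in all phases, since uniqueness of the infinite cluster holds on amenable quasi-transitive graphs (Burton--Keane), and subexponential growth implies amenability.
\item[(c)] The edges within distance $R$ of the boundary number at most $|B_{r}\setminus B_{r-R}|$. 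Subexponential growth yields a subsequence, or more carefully all $r$ via $|B_{r+R}|/|B_{r}|\to1$ along most $r$, on which this is $o(|B_{r}|)$.
\item[(d)] Obtain $|B_{r}|^{-1}\sum_{e}Y_{e}^{2}\to$ a constant by an $L^{2}$ ergodic averaging argument. Since $Y_{e}$ depends mostly on a finite neighbourhood, covariances of $Y_{e}^{2}$ and $Y_{f}^{2}$ decay with $d(e,f)$, so the variance of the average is $o(1)$ by a direct second-moment computation.
\end{itemize}

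The delicate point I expect is that the martingale ordering is not translation invariant on a general quasi-transitive graph, so the stationary approximation $Y_{e}$ may depend on the orbit and on the ordering. I would first try to sidestep this, as Zhang does, by showing that $\E[\Delta_{k}^{2}\mid\ldots]$ averages correctly without needing exact stationarity, controlling covariances through stabilization alone. Combining (a)--(d) with the variance lower bound then gives (ii), and McLeish's theorem yields the stated CLT.
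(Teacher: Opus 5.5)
The martingale set-up, the bound $|\Delta_k|\le 1$ and the reduction to McLeish's conditions match the paper. Your variance lower bound is a valid, more elementary alternative to the paper's FKG argument, which writes $K_r=\sum_{x\in B_r}|\mathcal{C}_r(x)|^{-1}$ and uses positive correlation of decreasing functions. One correction there. Let $\mathrm{Piv}_k$ be the event that the endpoints of $e_k$ are not joined in $G(\omega;B_r)$ without $e_k$. Then $\Delta_k=-(\omega(e_k)-p)\,\Prob_p(\mathrm{Piv}_k\mid\mathcal{F}_{k-1})$, so Jensen gives $\E_p[\Delta_k^2]\ge p(1-p)\,\Prob_p(\mathrm{Piv}_k)^2$. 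That is the square of the pivotality probability, not the probability itself. This is harmless, because the probability is bounded below uniformly.

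The genuine gap is condition (ii). Your plan (a)--(d) tries to show that $|B_r|^{-1}\sum_k\Delta_k^2$ converges to a constant through a stationary approximation $Y_e$. That requires an edge ordering invariant under translations by a subgroup acting with finitely many orbits, and in the generality of the theorem no such ordering exists. For example, on a Cayley graph of the Grigorchuk group (torsion, intermediate growth), every translation $h\neq 1$ has finite order $k$ and moves some edge $e$. Iterating $e\prec he$ (or $he\prec e$) $k$ times returns to $e$, a contradiction. This is exactly why Theorem~\ref{TLC_func} assumes a left-orderable finite-index subgroup, and your ``sidestep'' is left unspecified.

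The missing idea is that neither stationarity nor a limiting constant is needed, because McLeish's condition is relative to the exact variance. Since $\Var_p(K_r)=\sum_k\E_p[\Delta_k^2]\ge c|B_r|$, condition (ii) reduces to the concentration statement $\sum_k\bigl(\Delta_k^2-\E_p[\Delta_k^2]\bigr)=o(|B_r|)$ in probability. For that statement it does not matter that the laws of the $\Delta_k$ depend on the ordering.

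The paper proves this concentration with a truncation that is local but not stationary. The variable $\delta'_{r,j}$ keeps $\Delta_j$ only on $D_r^c(e_j)$, the event that the effect of $e_j$ is decided inside $B_m(e_j)$: either the endpoints are joined there, or one endpoint's cluster is trapped strictly inside the ball. So $\delta'_{r,j}$ depends only on those edges among $e_1,\dots,e_j$ that lie in $B_m(e_j)$, whatever the ordering. The argument then has three pieces:
\begin{itemize}
\item[(1)] $\E_p|\Delta_j-\delta'_{r,j}|\le\Prob_p(D_r(e_j))$. This two-arm probability does not involve the ordering, takes finitely many values for edges away from $\partial B_r$ by quasi-transitivity, and tends to $0$ by uniqueness of the infinite cluster.
\item[(2)] Finite-range dependence gives $\Var_p\bigl(\sum_j(\delta'_{r,j})^2\bigr)\le C\,b_r\sup_x|B_{2m}(x)|$.
\item[(3)] Edges within distance $m$ of $\partial B_r$ cost $O(|B_r\setminus B_{r-m}|)$.
\end{itemize}
The paper takes $m=g(r)$ from Lemma~\ref{Growthlemma}. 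Your fixed $R$ also works. With $m=R$ fixed, the Chebyshev term vanishes as $r\to\infty$. Hence the $\limsup_r$ of the probability in (ii) is at most $C\varepsilon^{-1}$ times the largest of the finitely many values of $\Prob_p(D_r(e,R))$, and you then let $R\to\infty$.

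Either way, the boundary layer must be $o(|B_r|)$ for all large $r$. A subsequence, or ``most $r$'' as in your (c), would only give the CLT along that subsequence. The paper gets this from the F{\o}lner property of the full sequence of balls, imported from its reference into Lemma~\ref{Growthlemma}. Whatever you cite at that step must therefore cover every radius.
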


  Although the number of clusters in Theorem \ref{teo:perc} can be analyzed
  using a geometrically defined martingale, our approach to establishing a general
  CLT for stabilizing functionals relies on computing the asymptotic variance via
  the pointwise ergodic theorem. To ensure the strictly translation-invariant
  martingale structure needed for this method, we restrict our focus in our second
  result (extending \cite{Penrose2001CLT}) to amenable Cayley graphs admitting
  a finite-index left-orderable (LO) subgroup $H$.

  \begin{theorem}
    \label{TLC_func} Let $G$ be the Cayley graph of an amenable group $\Gamma$
    admitting a finite-index left-orderable subgroup $H$, with
    $n=[\Gamma:H]$. Consider Bernoulli bond percolation with parameter $p\in(
    0,1)$, an exhaustive F{\o}lner sequence $(A_{r})$, and let $\mathcal{A}$
    be its orbit under $\Gamma$. Then there exists an $H$-fundamental set of
    edges $\tilde E\subset E$ such that, if the stationary $\mathcal{A}$-functional
    $F$ stabilizes in sequence and satisfies a finite moment condition on $\tilde
    E$, then:
    \begin{enumerate}
      \item $\displaystyle \lim_{r\to\infty}\frac{1}{|A_{r}|}\Var_{p}\!\bigl
        (F(A_{r})\bigr)=\sigma^{2}$;

      \item $\displaystyle |A_{r}|^{-1/2}\bigl(F(A_{r})-\E_{p}[F(A_{r})]\bigr
        )\ \overset{d}{\Longrightarrow}\ \mathcal{N}(0,\sigma^{2})$,
    \end{enumerate}
    where
    \[
      \sigma^{2}= \frac{1}{2n}\sum_{e\in\tilde E}\E_{p}\!\left[\E_{p}\!\left
      [D_{e}^{\infty}\,\middle|\,\mathcal{F}_{e}\right]^{2}\right].
    \]
    Here $D_{e}^{\infty}$ is the limit in probability of $D_{e}(\omega,xA_{r}
    ):=F(\omega,xA_{r})-F(\omega^{e},xA_{r})$ as $r\to\infty$ (the limit
    does not depend on $x$), and $\mathcal{F}_{e}$ is the $\sigma$-algebra
    generated by the edges preceding or equal to $e$ in an $H$-invariant total order on
    $E$.
  \end{theorem}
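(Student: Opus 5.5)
We follow Penrose's martingale approach for stabilizing functionals, with the lattice lexicographic order replaced by a left-invariant order on $H$. First we build the order on edges. Fix a left-invariant total order $<_H$ on $H$ and coset representatives $t_1,\dots,t_n$ of $H\backslash\Gamma$ (right cosets $Ht_i$), so that every $g\in\Gamma$ is uniquely $g=ht_i$. Each edge $\{g,gs\}$ is then described by its lower endpoint, its coset label and its generator. We order edges as follows. Each edge $e$ is written as $h\cdot e_0$ with $h\in H$ and $e_0$ in a finite set $\tilde E$ of representatives (edges having an endpoint in $\{t_1,\dots,t_n\}$). Edges are ordered first by the $H$-coordinate $h$ under $<_H$, and then by a fixed total order on the finite set $\tilde E$. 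Ambiguities arise from edges representable with two $H$-coordinates, such as $\{g,gs\}$ seen from either endpoint. This is the source of the $h$ versus $h^{-1}$ remark and of the factor $\tfrac12$. We resolve them by choosing the minimal representation. The result is an $H$-invariant total order on $E$, and $\tilde E$ is an $H$-fundamental set in which each $H$-orbit of edges is met exactly twice or once in a controlled way. The counting factor $1/(2n)$ then comes from $|A_r|\approx n|A_r\cap H|$ together with this double counting.

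Next we set up the martingale decomposition. Enumerate the finitely many edges of $E(A_r)$ in the invariant order, $e_1<\dots<e_m$, and let $\mathcal F_{e}$ be generated by the edges $\le e$. Then
\[
F(A_r)-\E_p F(A_r)=\sum_{e\in E(A_r)}\Delta_e,\qquad \Delta_e=\E_p[F(A_r)\mid\mathcal F_e]-\E_p[F(A_r)\mid\mathcal F_{e^-}].
\]
The increment equals $\Delta_e=\E_p[D_e(\omega,A_r)\mid\mathcal F_e]$, because resampling $e$ and integrating out the later edges reproduces the conditional expectation with respect to the past. Edges outside $E(A_r)$ contribute zero. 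By $H$-invariance of the order and stationarity of $F$, the increment for $e=h e_0$ equals $\E_p[D_{e_0}(\tilde\varphi_{h}^{-1}\omega,h^{-1}A_r)\mid\mathcal F_{e_0}]$ composed with the shift. We then apply McLeish's martingale CLT, which requires two inputs:
\[
\text{(a) } \frac1{|A_r|}\max_e \Delta_e^2\to0 \text{ in } L^1, \qquad \text{(b) } \frac1{|A_r|}\sum_e\Delta_e^2\to\sigma^2 \text{ in } L^1.
\]
Condition (a) follows from the uniform $\gamma>2$ moment bound by a union bound: $\E\max_e\Delta_e^2\le(\sum_e\E|\Delta_e|^\gamma)^{2/\gamma}=O(|A_r|^{2/\gamma})=o(|A_r|)$. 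The variance statement (1) follows from (b) together with orthogonality of the increments and uniform integrability, which the $\gamma$-moments supply.

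The main obstacle is (b). We split the edges $e=he_0$ with $h\in H$ into two groups. \emph{Deep} edges are those for which $h^{-1}A_r$ contains a large ball around $e_0$. \emph{Boundary} edges are the rest, and they are $o(|A_r|)$ in number by the Følner property, since $\Gamma$ and $H$ are coarsely equivalent. The boundary edges contribute $o(1)$ after normalization, by the moment bound. For deep edges we want to replace $\Delta_e$ by $\E_p[D^\infty_{e}\mid\mathcal F_e]$, the limiting increment shifted by $h$. The difficulty is that sequential stabilization is stated only along the translates $xA_r$ of the fixed sequence, not for arbitrary sets containing big balls. We would handle this by stationarity: $D_{e_0}(\cdot,h^{-1}A_r)$ is precisely $D_{e_0}(\cdot,xA_r)$ with $x=h^{-1}$. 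What is needed is convergence uniform in $x$ over those $x$ with $x^{-1}e_0$ deep in $A_r$. To get it, we first try a truncation/coupling argument: convergence in probability together with the $\gamma$-moments gives $L^2$ convergence of $D_{e_0}(xA_r)\to D^\infty_{e_0}$, and we will argue that this is uniform once $e_0$ is at depth $R(r)\to\infty$ in $xA_r$. This holds because the exhaustive sequence makes $xA_r\supseteq B_R(e_0)$ for deep $x$, and then we compare against $yA_s$ for suitable $y,s$. If that fails, we fall back to averaging: it suffices to control $\frac1{|A_r|}\sum_x\|D_{e_0}(xA_r)-D^\infty_{e_0}\|_2^2$, which we bound by dominated convergence over the Følner average. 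Once the replacement is made, $\frac1{|A_r|}\sum_{h}\E[D^\infty_{he_0}\mid\mathcal F_{he_0}]^2$ is an ergodic average of a fixed $L^1$ function under the measure-preserving, mixing Bernoulli shift of $H$ along the Følner sets $A_r\cap H$. The mean ergodic theorem for amenable groups, with $L^1$ convergence along Følner sequences and no tempering needed, then gives convergence to $\frac1n\sum_{e_0}\E[\E[D^\infty_{e_0}\mid\mathcal F_{e_0}]^2]$, with the factor $\tfrac12$ from double counting. This yields (b), hence (1) and, by McLeish, (2).
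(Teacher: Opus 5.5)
Your outline matches the paper's proof in structure. Both build an $H$-invariant edge order from a left order on $H$ and coset representatives. Both write the increments as $\Delta_e=\E_p[D_e(A_r)\mid\mathcal F_e]$ and apply McLeish's theorem, with the $\gamma$-moments controlling the maximum. Both use stationarity to transport every edge to $\tilde E$, the mean ergodic theorem along F{\o}lner sets in $H$, and obtain the factor $1/(2n)$ from coset equidistribution and double counting.

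The genuine gap is the step you flag yourself in (b), and neither proposed fix closes it. Stabilization in sequence is a statement for each \emph{fixed} $x$ as $r\to\infty$, with no rate. Nothing in the hypotheses makes $D_e(\omega,A)$ depend only on $\omega$ and $A$ near $e$. So knowing $xA_r\supseteq B_R(e_0)$ tells you nothing about $\|D_{e_0}(xA_r)-D^\infty_{e_0}\|_2$, and there is no mechanism for ``comparing against $yA_s$''. The averaging fallback fails for the same reason. The quantity $a_r(x):=\|D_{e_0}(xA_r)-D^\infty_{e_0}\|_2^2$ is bounded and tends to $0$ for each fixed $x$. But the relevant $x$ form a set of size of order $|A_r|$ that moves with $r$. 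This is a genuine double limit, dominated convergence does not apply, and the average need not vanish.

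In fact the step cannot be justified from the stated hypotheses. Take $\mathbb{Z}$ with $H=\Gamma$, $n=1$, $S=\{\pm1\}$, $A_r=[-r,r]$, and set $F(\omega,xA_r):=\sum_{e\in E(xA_r)}c_r(e-x)\,\omega(e)$. Here $e-x$ is the translate of $e$, and $c_r(f)=1$ if $f\subseteq[-r/2,r/2]$, $c_r(f)=2$ otherwise.
\begin{itemize}
\item This $F$ is stationary and $|D_e|\le 2$.
\item For fixed $x$ one has $D_e(xA_r)=\omega(e)-\omega^e(e)$ for all large $r$, so $F$ stabilizes in sequence.
\item With $\tilde E=\{\{0,1\},\{0,-1\}\}$ and $\E_p[D^\infty_e\mid\mathcal F_e]=\omega(e)-p$, the formula gives $\sigma^2=p(1-p)$, which is the correct value when $c_r\equiv 1$.
\item However, $\Var_p(F(A_r))/|A_r|\to\tfrac{5}{2}\,p(1-p)$.
\end{itemize}

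The paper's proof makes the same leap. From $\delta^x_{r,j}\to\delta_j$ in $L^2$ for each fixed edge, it passes directly to $\frac{1}{|A_r|}\sum_{j\le b_r}\bigl((\delta^x_{r,j})^2-\delta_j^2\bigr)\to 0$ in $L^1$. So it does not supply the missing ingredient either.

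What closes the argument is a Penrose-type hypothesis: $D_e(A_k)\to D_e^\infty$ in probability along \emph{every} sequence $A_k\in\mathcal A$ with $\liminf_k A_k=V$. Arguing by contradiction, this gives for each $\varepsilon>0$ an $R$ with $\Prob_p(|D_e(A)-D_e^\infty|>\varepsilon)<\varepsilon$ for all $A\in\mathcal A$ containing $B_R(e)$. The uniform $\gamma$-moments upgrade this to uniform $L^2$ closeness. Your deep/boundary split then completes (b) as you intended, since the boundary edges number $o(|A_r|)$ by the F{\o}lner property and bounded degree.
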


  \begin{remark}
    An interesting observation is that, as we shall see in the proof, the random
    variable $|A_{r}|^{-1/2}\bigl(F(A_{r})-\E_{p}[F(A_{r})]\bigr)$ is decomposed
    into a sum over the edges in $A_{r}$, the total number of which we
    denote by $b_{r}$. If we instead normalize by $b_{r}^{1/2}$ and consider
    the random variable $b_{r}^{-1/2}\bigl(F(A_{r})-\E_{p}[F(A_{r})]\bigr)$,
    its asymptotic variance is given by
    \[
      \sigma^{2}=\frac{1}{n|S|}\sum_{e\in\tilde E}\E_{p}\!\left[\E_{p}\!\left
      [D_{e}^{\infty}\,\middle|\,\mathcal{F}_{e}\right]^{2}\right],
    \]
    which, given the fundamental set of edges $\tilde{E}$ to be introduced later,
    yields the average over the fundamental edges:
    \[
      \sigma^{2}=\frac{1}{|\tilde{E}|}\sum_{e\in\tilde E}\E_{p}\!\left[\E_{p}
      \!\left[D_{e}^{\infty}\,\middle|\,\mathcal{F}_{e}\right]^{2}\right].
    \]
    An analogous statement holds for Theorem \ref{TLC_hom}.
  \end{remark}

  While Theorem \ref{TLC_func} requires the existence of an LO subgroup, this
  condition is naturally satisfied in the presence of polynomial growth, as established
  below.

  \begin{theorem}
    \label{TLC_funcionais_polinomial} Let $\Gamma$ be an infinite, finitely
    generated group of polynomial growth. Then Theorem~\ref{TLC_func} applies
    to any Cayley graph $G=\Cay(\Gamma,S)$ associated with a finite symmetric
    generating set $S$.
  \end{theorem}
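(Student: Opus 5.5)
The hypotheses of Theorem~\ref{TLC_func} are amenability of $\Gamma$ and the existence of a finite-index left-orderable subgroup $H$, and the plan is to verify both for a group of polynomial growth. Neither condition depends on the choice of $S$: the growth type of $\Gamma$ is a quasi-isometry invariant, so it is the same for all finite symmetric generating sets, and amenability and the existence of $H$ are purely algebraic properties of $\Gamma$.

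For amenability, take $A_{r}=B_{r}$. Since $|B_{r}|\le Cr^{D}$, the ratio $|B_{r+1}|/|B_{r}|$ cannot stay above $1+\varepsilon$ for all large $r$: otherwise $|B_{r}|$ would grow exponentially, contradicting the polynomial bound. Hence $\liminf_{r}|\partial B_{r}|/|B_{r}|=0$, because $\partial B_{r}\subseteq B_{r}\setminus B_{r-1}$. Passing to a subsequence of radii gives a Følner sequence of balls. This sequence is increasing and exhausts $\Gamma$, so it is exhaustive. (Alternatively, one can quote the standard fact that subexponential growth implies amenability.)

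For the finite-index left-orderable subgroup:
\begin{enumerate}
  \item By Gromov's theorem, $\Gamma$ is virtually nilpotent. So there is a finite-index nilpotent subgroup $N$, which is finitely generated because it has finite index in a finitely generated group.
  \item A finitely generated nilpotent group has a torsion subgroup $T(N)$ that is finite. The quotient $N/T(N)$ is torsion-free nilpotent.
  \item Being finitely generated nilpotent, $N$ is residually finite. Choose a finite-index normal subgroup $M\le N$ with $M\cap T(N)=\{1\}$, which is possible since $T(N)$ is finite. Then $M$ is torsion-free, nilpotent, and of finite index in $\Gamma$.
  \item A torsion-free nilpotent group is left-orderable. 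It is even bi-orderable: its upper central series can be refined to a series with torsion-free abelian factors (Mal'cev), and one orders it lexicographically along that series.
\end{enumerate}
Take $H=M$; this gives $n=[\Gamma:H]<\infty$.

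With both hypotheses verified, Theorem~\ref{TLC_func} applies verbatim to $G=\Cay(\Gamma,S)$. The main obstacle is step 3, producing a torsion-free finite-index subgroup. I would rely on the standard facts that finitely generated nilpotent groups are polycyclic, hence residually finite and virtually torsion-free, and cite them rather than reprove them. The only genuinely deep input is Gromov's theorem, which I would invoke directly.
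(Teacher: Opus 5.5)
Your proposal is correct and follows the same skeleton as the paper's proof: Gromov's theorem, then a torsion-free nilpotent subgroup of finite index, then a left order on it. The only differences are in which standard facts you cite. You get torsion-freeness from finiteness of $T(N)$ plus residual finiteness instead of Baumslag's embedding into $A\times B$, and you quote bi-orderability of torsion-free nilpotent groups instead of building the positive cone from Mal'cev coordinates. You also explicitly check amenability via a F{\o}lner subsequence of balls, which the paper leaves implicit.
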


  As an application of Theorem \ref{TLC_func}, we establish a Central Limit Theorem
  for the Betti numbers of the associated random simplicial complexes.
  \begin{theorem}
    \label{TLC_hom} Let $G$ be the Cayley graph of an amenable group $\Gamma$
    admitting a finite-index left-orderable subgroup $H$, with
    $n=[\Gamma:H]$. Consider Bernoulli bond percolation on $E(G)$ with parameter
    $p\in(0,1)$ and an exhaustive F{\o}lner sequence $(A_{r})$. Fix a local
    graph simplicial complex rule $\Delta$, and let $\beta_{n}^{r}$ denote the
    $n$-th Betti number of the simplicial complex $\Delta(G(\omega;A_{r}))$.

    Let $\preccurlyeq$ and $(\mathcal{F}_{e})_{e\in E(G)}$ be as in Theorem~\ref{TLC_func}.
    If $\beta_{n}^{r}>0$ a.s.\ for some $r$, then there exists a finite $H$-fundamental
    set of edges $\tilde{E}\subset E(G)$ such that:
    \begin{enumerate}
      \item $\displaystyle \lim_{r\to\infty}\frac{1}{|A_{r}|}\Var_{p}(\beta
        _{n}^{r})=\sigma^{2}$;

      \item $\displaystyle |A_{r}|^{-1/2}\Bigl(\beta_{n}^{r}-\E_{p}[\beta_{n}
        ^{r}]\Bigr)\ \overset{d}{\Longrightarrow}\ \mathcal{N}(0,\sigma^{2}
        )$.
    \end{enumerate}
    Moreover, the asymptotic variance is given by
    \[
      \sigma^{2}=\frac{1}{2n}\sum_{e\in\tilde{E}}\E_{p}\!\left[\E_{p}\!\left
      ( D_{e}^{\infty}\,\middle|\, \mathcal{F}_{e}\right)^{2}\right ],
    \]
    where $D_{e}^{\infty}$ is the limit in probability, as $r\to\infty$, of
    the difference in $\beta_{n}^{r}$ caused by perturbing the state of $e$.
  \end{theorem}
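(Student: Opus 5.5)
The plan is to deduce Theorem~\ref{TLC_hom} from Theorem~\ref{TLC_func} applied to the functional $F(\omega,A):=\beta_n(\Delta(G(\omega;A)))$ on the orbit $\mathcal{A}$. This requires three facts: stationarity of $F$, sequential stabilization of $D_e$ on the fundamental set $\tilde E$ supplied by Theorem~\ref{TLC_func}, and the finite moment condition there. Stationarity follows from the equivariance property $\Delta(\varphi(H))=\varphi(\Delta(H))$ with $\varphi=\varphi_u$. Indeed, $G(\tilde\varphi_u\omega;uA)=\varphi_u(G(\omega;A))$, so the two complexes are isomorphic and have equal Betti numbers. Note that $\tilde E$ can be taken finite, since $H$ has finite index and $S$ is finite, so $E$ has finitely many $H$-orbits.

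\textbf{Key step: a deterministic single-edge bound.} Let $e=\{x,y\}$. Changing $\omega(e)$ only alters simplices $\sigma$ whose membership depends on $e$. By confinement and monotonicity, any simplex of $\Delta(H\cup\{e\})\setminus\Delta(H\setminus\{e\})$ lies in $\Delta(H_\sigma)$ with $H_\sigma$ of diameter at most $T$ containing $e$, so $\sigma\subseteq B_T(x)$. The number $M$ of such simplices of dimension $n$ and $n+1$ is therefore bounded by a constant depending only on $|B_T(x)|$, which is uniform by transitivity. Adding $k$ simplices of dimensions $n$ or $n+1$ to a complex changes $\beta_n$ by at most $k$ (each simplex changes $\dim\ker\partial_n$ or $\operatorname{rank}\partial_{n+1}$ by at most one). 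Hence $|D_e(\omega,A)|\le M$ uniformly in $A\in\mathcal{A}$ and $\omega$. This gives the finite moment condition for every $\gamma>2$.

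\textbf{Main obstacle: stabilization.} We must show that $D_e(\omega,xA_r)$ converges in probability to a limit independent of $x$. Write $D_e$ as a change of ranks. Adding the local simplices near $e$ changes $\beta_n$ by an amount determined by whether the new $n$-simplices create cycles, i.e. whether their boundaries are already boundaries or cycles in the window, and whether the new $(n+1)$-simplices kill existing classes. These are questions of the form ``is a fixed chain $c$ supported in $B_T(x)$ in $\operatorname{im}\partial_{n+1}$ of $\Delta(G(\omega;A_r))$?''

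By monotonicity in $r$ (the $A_r$ are increasing and exhaustive), the subspaces $\operatorname{im}\partial_{n+1}^{r}$ intersected with the finite-dimensional space of chains supported near $e$ form an increasing sequence of subspaces of a fixed finite-dimensional space. They therefore stabilize almost surely at some random $r_0$. The same holds for the relevant kernel data, because cycles supported near $e$ are fixed. Using the explicit rank formula $D_e=\Delta\dim\ker\partial_n-\Delta\operatorname{rank}\partial_{n+1}$, expressed through these local subspaces (a Mayer--Vietoris/rank-nullity bookkeeping with the finite set of new simplices), $D_e(\omega,A_r)$ is eventually constant almost surely. Its limit is determined by the infinite complex $\Delta(G(\omega))$ with finite-support boundaries, and hence does not depend on $x$. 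For translates $xA_r$ the same argument applies, since $(xA_r)_r$ is also increasing and exhausts $V$, and the limit is intrinsic to $\Delta(G(\omega))$. I expect writing $D_e$ carefully as a function of these stabilizing local subspaces to be the delicate part.

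Finally, the hypothesis $\beta_n^r>0$ is used only to ensure that $F$ is nondegenerate. The conclusions (1)--(2) and the formula for $\sigma^2$ then follow directly from Theorem~\ref{TLC_func}.
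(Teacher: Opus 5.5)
Your proposal is correct and follows the paper's route. Both reduce to Theorem~\ref{TLC_func}, use confinement and monotonicity to localize the effect of $e$ to simplices in $B_T(u)$, and get stabilization from increasing chains of subspaces of a fixed finite-dimensional space. The paper phrases this through the long exact sequence of the pair $(X_r,\tilde X_r)$ and the kernels $Z_r$, $J_r$, whereas you do the equivalent rank bookkeeping directly, which also yields a cleaner deterministic bound on $|D_e|$.

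Your justification that the limit does not depend on $x$ is the right one: the windows $xA_r$ exhaust $\Delta(G(\omega))$, so the stabilized value is intrinsic to the infinite complex. The paper's remark that $D_e^\infty$ depends only on $\omega$ near $e$ already fails for $\beta_0$, where it depends on whether the endpoints of $e$ are connected in $G(\omega)\setminus\{e\}$, a global event.
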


  \begin{remark}
    For $n=0$, the Betti number $\beta_{0}^{r}$ counts the number of
    connected components of the simplicial complex $\Delta(G(\omega;A_{r}))$.
    Under the connectivity assumption on $\Delta$, this coincides with the
    number of connected components of the underlying percolation subgraph. Thus,
    Theorem \ref{TLC_hom} provides an analogue of Theorem \ref{teo:perc} for
    F{\o}lner sequences, albeit under more restrictive algebraic assumptions
    on the graph (requiring a Cayley graph with an LO subgroup, rather than mere
    quasi-transitivity and subexponential growth).
  \end{remark}

  \section{Central Limit Theorem for the Number of Clusters}
  \label{sec:clusters}

  \begin{proof}[Proof of Theorem \ref{teo:perc}]
    Fix a total ordering $e_{1}, e_{2}, \dots$ of the edge set $E$ such that
    for all $r \geq 1$, $E_{r}:= E(B_{r}) = \{e_{1}, \dots, e_{b_r}\}$, and for
    any $j \geq 1$, the subgraph induced by $\{e_{1}, \dots, e_{j}\}$ on its
    vertex set $V_{j}$ is connected. Write $\omega_{j}:= \omega(e_{j})$. We
    define the filtration $\{\mathcal{F}_{j}\}_{j \geq 0}$, where $\mathcal{F}
    _{0}:= \{\varnothing,\Omega\}$ is the trivial $\sigma$-algebra and $\mathcal{F}
    _{j}:= \sigma(\omega_{1}, \dots, \omega_{j})$ for $j \geq 1$.

    For $r \geq 1$, define the sequence $\{\delta_{r,j}\}_{j=1}^{b_r}$ by
    \[
      \delta_{r,j}:= \E_{p}[K_{r}\mid\mathcal{F}_{j}] - \E_{p}[K_{r}\mid\mathcal{F}
      _{j-1}].
    \]
    This is a martingale difference sequence with respect to
    $(\Omega, \mathcal{F}, \{\mathcal{F}_{j}\}_{j\geq 0}, \Prob_{p})$. Since
    $K_{r}$ is $\mathcal{F}_{b_r}$-measurable, we have the telescoping sum
    \[
      K_{r}- \E_{p}[K_{r}] = \sum_{j=1}^{b_r}\delta_{r,j}.
    \]
    Let $X_{r,j}:= \delta_{r,j}/ \sqrt{\Var_{p}(K_{r})}$. The collection $\{X
    _{r,j}\}_{r \geq 1, 1 \leq j \leq b_r}$ forms a martingale difference array,
    and
    \[
      \frac{K_{r}- \E_{p}[K_{r}]}{\sqrt{\Var_{p}(K_{r})}}= \sum_{j=1}^{b_r}
      X_{r,j}.
    \]
    We apply the following Martingale Central Limit Theorem (cf.\cite{mcleish1974dependent}).

    \begin{theorem}[McLeish, 1974]
      \label{convergencia_martingais_TLC} Let $\{X_{r,j}\}_{r \geq 1, 1 \leq
      j \leq b_r}$ be a martingale difference array with respect to filtrations
      $\{\mathcal{F}_{r,j}\}$, satisfying:
      \begin{enumerate}
        \item $\max_{1 \leq j \leq b_r}|X_{r,j}|$ is uniformly bounded in
          $L_{2}$-norm;

        \item $\max_{1 \leq j \leq b_r}|X_{r,j}|$ converges in probability
          to $0$; and

        \item $\sum_{j=1}^{b_r}X^{2}_{r,j}$ converges in probability to $1$.
      \end{enumerate}
      Then, denoting $S_{r}= \sum_{j=1}^{b_r}X_{r,j}$, $S_{r}$ converges in
      distribution to $\mathcal{N}(0,1)$.
    \end{theorem}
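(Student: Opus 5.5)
The plan is to prove this martingale CLT directly with McLeish's characteristic-function argument. By Lévy's continuity theorem it suffices to show that, for each fixed $t\in\R$,
\[
\E\bigl[e^{itS_{r}}\bigr]\to e^{-t^{2}/2}.
\]
The basic tool is the elementary identity
\[
e^{ix}=(1+ix)\exp\bigl(-x^{2}/2+\rho(x)\bigr),
\]
valid for real $x$ with $|\rho(x)|\le |x|^{3}$ for $|x|$ small. Applying it to each $x=tX_{r,j}$ gives
\[
e^{itS_{r}}=T_{r}\,U_{r},\qquad T_{r}:=\prod_{j=1}^{b_r}(1+itX_{r,j}),\qquad U_{r}:=\exp\Bigl(-\tfrac{t^{2}}{2}\sum_{j}X_{r,j}^{2}+\sum_{j}\rho(tX_{r,j})\Bigr).
\]

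\textbf{Step 1: truncate.} First I reduce to an array with $\sum_{j}X_{r,j}^{2}$ controlled. Let $J_{r}:=\min\{j:\sum_{i\le j}X_{r,i}^{2}>2\}$, with $J_{r}=b_r$ if no such $j$ exists, and set $X'_{r,j}:=X_{r,j}\mathbbm{1}\{j\le J_{r}\}$.
\begin{itemize}
\item Since $\{j\le J_{r}\}\in\mathcal{F}_{r,j-1}$, the $X'_{r,j}$ are still martingale differences.
\item By condition~3, $\Prob(X'_{r,\cdot}\neq X_{r,\cdot})\le\Prob\bigl(\sum_{j}X_{r,j}^{2}>2\bigr)\to0$. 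So it suffices to prove the CLT for the primed array.
\item The primed array inherits conditions~1--3, and in addition $\sum_{j<J_{r}}X'^{2}_{r,j}\le 2$.
\end{itemize}

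\textbf{Step 2: control $T'_{r}$.} By the martingale property, conditioning successively on $\mathcal{F}_{r,b_r-1},\mathcal{F}_{r,b_r-2},\dots$ gives $\E[T'_{r}]=1$. This needs integrability of the partial products, which the next bound provides. Using $|1+ix|^{2}=1+x^{2}\le e^{x^{2}}$,
\[
|T'_{r}|\le \exp\Bigl(\tfrac{t^{2}}{2}\sum_{j<J_{r}}X'^{2}_{r,j}\Bigr)\,\bigl(1+|t|\,|X'_{r,J_{r}}|\bigr)\le e^{t^{2}}\bigl(1+|t|\max_{j}|X_{r,j}|\bigr).
\]
By condition~1, $(T'_{r})$ is therefore bounded in $L^{2}$, hence uniformly integrable.

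\textbf{Step 3: show $U'_{r}\to e^{-t^{2}/2}$ in probability.} By condition~3, $\sum_{j}X'^{2}_{r,j}\to1$ in probability. By condition~2, $\max_{j}|X'_{r,j}|\to0$ in probability, so with probability tending to one every $|tX'_{r,j}|$ is in the range where the bound on $\rho$ holds. On that event
\[
\Bigl|\sum_{j}\rho(tX'_{r,j})\Bigr|\le |t|^{3}\max_{j}|X'_{r,j}|\sum_{j}X'^{2}_{r,j}\to0.
\]
Hence $U'_{r}\to e^{-t^{2}/2}$ in probability.

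\textbf{Step 4: conclude.} Write
\[
\E\bigl[e^{itS'_{r}}\bigr]-e^{-t^{2}/2}=\E\bigl[T'_{r}\bigl(U'_{r}-e^{-t^{2}/2}\bigr)\bigr],
\]
which uses $\E[T'_{r}]=1$. The random variable $T'_{r}(U'_{r}-e^{-t^{2}/2})$ tends to $0$ in probability. It is also uniformly integrable, because $|T'_{r}U'_{r}|=|e^{itS'_{r}}|=1$ and $T'_{r}$ is uniformly integrable. So its expectation tends to $0$.

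The main obstacle is Step 2. Without truncation, $T_{r}$ need not be uniformly integrable, and the cancellation $\E[T_{r}]=1$ would be useless. The stopping-time truncation at $J_{r}$ is the device that makes condition~1 enter exactly at the single overshooting term $X'_{r,J_{r}}$.
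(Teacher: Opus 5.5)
Your proof is correct and is essentially McLeish's original argument: the factorization $e^{ix}=(1+ix)\exp(-x^{2}/2+\rho(x))$, the truncation $X_{r,j}\mathbbm{1}\{\sum_{i<j}X_{r,i}^{2}\le 2\}$ that puts $T'_{r}=\prod_{j}(1+itX'_{r,j})$ in $L^{2}$ uniformly via condition~1, and the uniform-integrability step for $\E[T'_{r}(U'_{r}-e^{-t^{2}/2})]$. The paper gives no proof of this statement and only cites \cite{mcleish1974dependent}, so your reconstruction follows the intended route; the one step you leave implicit, that $T'_{r}(U'_{r}-e^{-t^{2}/2})\to 0$ in probability, follows from the tightness of $T'_{r}$ given by its $L^{2}$ bound.
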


    To verify conditions (1) and (2), we first bound $\delta_{r,j}$. Note that,
    by the Law of Total Expectation, we have
    \begin{align*}
      \delta_{r,j}(\omega) = \sum_{c_k,c_{k+1}, \dots,c_{b_r} \in \{0,1\}} & [K_{r}(\omega_{1},\dots,\omega_{j-1},\omega_{j}, c_{j+1},\dots, c_{b_r})-K_{r}(\omega_{1},\dots,\omega_{j-1},c_{j}, c_{j+1},\dots, c_{b_r})] \\
                                         & \times\mathbb{P}(\omega_{j}= c_{j}, \omega_{j+1}= c_{j+1}, \dots, \omega_{b_r}=c_{b_r})
    \end{align*}

    The difference in $K_{r}$ induced by flipping the state of a single edge
    $e_{j}$ (from 0 to 1) is at most 1 (it either connects two existing clusters
    or does not). Thus, $|\delta_{r,j}|$ is uniformly bounded by $1$. We require
    a lower bound on $\Var_{p}(K_{r})$. We denote
    $\mathcal{C}_{B_r}(\omega;x)$ simply as $\mathcal{C}_{r}(\omega;x)$ and
    use the identity
    \[
      K_{r}= \sum_{x \in B_r}|\mathcal{C}_{r}(x)|^{-1}.
    \]
    Since $\omega \mapsto |\mathcal{C}_{r}(x;\omega)|^{-1}$ is non-increasing
    with respect to the configuration ordering, the FKG inequality yields:
    \[
      \Var_{p}(K_{r}) \geq \sum_{x \in B_r}\Var_{p}( |\mathcal{C}_{r}(x)|^{-1}
      ).
    \]
    Since $1 \leq |\mathcal{C}_{r}(x)| \leq |B_{r}|$, we have
    \[
      \E_{p}[|\mathcal{C}_{r}(x)|^{-1}] = \Prob_{p}(|\mathcal{C}_{r}(x)|=1)
      + \sum_{k=2}^{|B_r|}\frac{1}{k}\Prob_{p}(|\mathcal{C}_{r}(x)|=k),
    \]
    then
    \[
      \E_{p}[|\mathcal{C}_{r}(x)|^{-1}] \leq \Prob_{p}(|\mathcal{C}_{r}(x)|
      = 1) + \frac{1}{2}\Prob_{p}(|\mathcal{C}_{r}(x)| \ge 2) = 1 - \frac{1}{2}
      \Prob_{p}(|\mathcal{C}_{r}(x)| \ge 2).
    \]
    Thus,
    $1 - \E_{p}[|\mathcal{C}_{r}(x)|^{-1}] \ge \frac{1}{2}\Prob_{p}(|\mathcal{C}
    _{r}(x)| \ge 2)$. Since the random variable $|\mathcal{C}_{r}(x)|^{-1}$ only
    takes values in $\{1, \frac{1}{2}, \frac{1}{3}, \dots\}$, the variance
    is bounded below by
    \begin{align*}
      \Var_{p}( |\mathcal{C}_{r}(x)|^{-1}) & \ge (1 - \E_{p}[|\mathcal{C}_{r}(x)|^{-1}])^{2}\Prob_{p}(|\mathcal{C}_{r}(x)| = 1) \\
                         & \ge \frac{1}{4}\Prob_{p}^{2}[|\mathcal{C}_{1}(x)| \ge 2] \cdot (1-p)^{|N_G(x)|},
    \end{align*}
    where the last inequality holds since
    $\mathcal{C}_{1}(x) \subseteq \mathcal{C}_{r}(x)$ (for $r \ge 1$) and
    $N_{G[B_r]}(x) \leq N_{G}(x)$. By quasi-transitivity, $V$ partitions
    into finitely many orbits. Within each orbit $\mathcal{O}_{i}$,
    $\Prob_{p}[|\mathcal{C}_{1}(x)| \ge 2] = c_{i}$ and $N_{G}(x) = d_{i}$
    are constant. Since $G$ is locally finite, connected, and $p \in (0,1)$,
    we have $c_{i}> 0$ and $d_{i}< \infty$ for all $i$. Let $c_{\min}:= \min_{i}
    c_{i}> 0$ and $d_{\max}:= \max_{i}d_{i}< \infty$. Then, for all $x \in V$,
    \[
      \Var_{p}( |\mathcal{C}_{r}(x)|^{-1}) \ge \frac{1}{4}c_{\min}^{2}(1-p
      )^{d_{\max}}=: \tau(p) > 0.
    \]
    Then, there exists $\tau(p) > 0$ such that $\Var_{p}(K_{r}) \geq \tau(p)|
    B_{r}|$. Since $|\delta_{r,j}| \leq 1$, we have
    \[
      |X_{r,j}| \leq 1 / \sqrt{\tau(p) |B_{r}|}.
    \]
    This implies
    \[
      \max_{j}|X_{r,j}| \leq 1 / \sqrt{\tau(p) |B_{r}|},
    \]
    which converges to 0 as $r \to \infty$ (since $G$ is infinite),
    satisfying (2). It also implies
    \[
      \E_{p}[\max_{j}|X_{r,j}|^{2}] \leq 1 / (\tau(p) |B_{r}|),
    \]
    which satisfies (1).

    For condition (3), we must show
    \[
      \sum_{j=1}^{b_r}X^{2}_{r,j}= \frac{\sum_{j=1}^{b_r}\delta^{2}_{r,j}}{\Var_{p}(K_{r})}
      \to 1 \quad \text{in probability.}
    \]
    Recalling the following identity for martingales
    $\Var_{p}(K_{r}) = \sum_{j=1}^{b_r}\E_{p}[\delta_{r,j}^{2}]$, then we have
    to show
    \[
      \frac{\sum_{j=1}^{b_r}(\delta^{2}_{r,j}- \E_{p}[\delta^{2}_{r,j}])}{\Var_{p}(K_{r})}
      \to 0 \quad \text{in probability.}
    \]
    Since $G$ is locally finite and quasi-transitive,
    $b_{r}\leq d_{\max}|B_{r}|$. Combined with $\Var_{p}(K_{r}) \ge \tau (p )
    |B_{r}|$, we have
    \[
      \Var_{p}(K_{r}) \ge (\tau(p)/d_{\max}) b_{r}=: c b_{r}
    \]
    for some $c>0$. It thus suffices to show that for any $\varepsilon > 0$,
    \[
      \lim_{r\to\infty}\Prob_{p}\left( \left| \sum_{j=1}^{b_r}(\delta^{2}_{r,j}
      - \E_{p}[\delta^{2}_{r,j}]) \right| > \varepsilon c b_{r}\right) = 0.
    \]

    Let $e \in E$. We denote its endpoints by $x_{1}(e)$ and $x_{2}(e)$, ordered
    such that $d(x_{1}(e), o) \leq d(x_{2}(e), o)$. For a configuration
    $\omega \in \Omega$, $m \ge 0$, and $j \in \{1,2\}$, we define $\mathcal{C}
    '_{m}(\omega; x_{j}(e))$ as the connected component of $x_{j}(e)$ within
    the subgraph induced by $B_{m}(x_{1}(e))$ using only the open edges in $E
    (\omega) \setminus \{e\}$. We write $B_{m}(e)$ for $B_{m}(x_{1}(e))$. Formally,
    $y \in \mathcal{C}'_{m}( \omega;x_{j}(e))$ if and only if
    $y \in B_{m}(e)$ and there exists a path $(v_{0}, \dots, v_{n})$ such
    that:
    \begin{itemize}
      \item $v_{0}= x_{j}(e)$ and $v_{n}= y$;

      \item $\{v_{i}\}_{i=0}^{n}\subset B_{m}(e)$;

      \item $\{v_{i}, v_{i+1}\} \in E(\omega) \setminus \{e\}$ for all $i \in
        \{0, \dots, n-1\}$.
    \end{itemize}

    Let $e \in E_{r}:= E(G[B_{r}])$. We define $D_{r}(e,m)$ as the event that
    the endpoints of $e$ are disconnected in $B_{m}(e)$ considering only edges
    in $E_{r}(\omega )\setminus \{e\}$, and both of their respective
    clusters reach the boundary of $B_{m}(e)$.

    More precisely, $D_{r}(e,m)$ occurs if and only if both of the following
    conditions hold:
    \begin{enumerate}
      \item[(i)] $\mathcal{C}'_{m}(x_{1}(e)) \cap \mathcal{C}'_{m}(x_{2}(e)
        ) = \varnothing$;

      \item[(ii)] $\exists u \in \mathcal{C}'_{m}(x_{1}(e))$ and $\exists v
        \in \mathcal{C}'_{m}(x_{2}(e))$ such that $d(u,x_{1}(e)) = d(v,x_{2}
        (e)) = m$.
    \end{enumerate}

    This event is illustrated in Figure \ref{example_event_D}.

    \begin{figure}[h!]
      \centering
      \begin{tikzpicture}[>=Stealth, scale=0.8]
        \draw[gray!15, thin] (0,0) grid (8,8);
        \foreach \x in {0, ..., 8} \foreach \y in {0, ..., 8}
        \fill[gray!50] (\x,\y) circle (1.5pt);

        \coordinate (x1) at (4,4);
        \coordinate (x2) at (5,4);

        \draw[line width=1pt, black, dashed]
          (4,8) --
          (8,4) --
          (4,0) --
          (0,4) --
          cycle;

        \node[font=\tiny, fill=white, inner sep=0.5pt]
          at
          (5.5, 0.5)
          {$\partial B_{4}(x_{1}(e))$};

        \draw[line width=2pt, black] (x1) -- (x2);

        \begin{scope}[
          color=red!80!black,
          line width=1.2pt,
          line cap=round,
          line join=round
        ]
          \draw (x1) -- (3,4) -- (3,5) -- (3,3) -- (1,3);
          \draw (3,4) -- (2,4) -- (2,6);
          \draw (2,3) -- (2,2) -- (3,2);
        \end{scope}

        \begin{scope}[
          color=blue!80!black,
          line width=1.2pt,
          line cap=round,
          line join=round
        ]
          \draw (x2) -- (5,7);
          \draw (5,6) --
            (6,6) --
            (6,3) --
            (6,2) --
            (5,2) --
            (5,3) -- (6,3);
          \draw (5,5) -- (4,5);
        \end{scope}

        \foreach \p in {x1, x2}
        { \fill[white] (\p) circle (2.2pt); \fill[black] (\p) circle (1.8pt); }

        \node[font=\scriptsize] at (3.75, 3.75) {$x_{1}(e)$};
        \node[font=\scriptsize] at (5.25, 3.75) {$x_{2}(e)$};
        \node[font=\small, above=2pt] at ($(x1)!0.5!(x2)$) {$e$};

        \node[text=red!80!black, font=\scriptsize]
          at
          (0.9,6)
          {$\mathcal{C}'_{4}(x_{1}(e))$};
        \node[text=blue!80!black, font=\scriptsize]
          at
          (7.1,6)
          {$\mathcal{C}'_{4}(x_{2}(e))$};
      \end{tikzpicture}
      \caption{Example of $D_{r}(e,4)$ event in $\mathbb{Z}^{2}$.}
      \label{example_event_D}
    \end{figure}
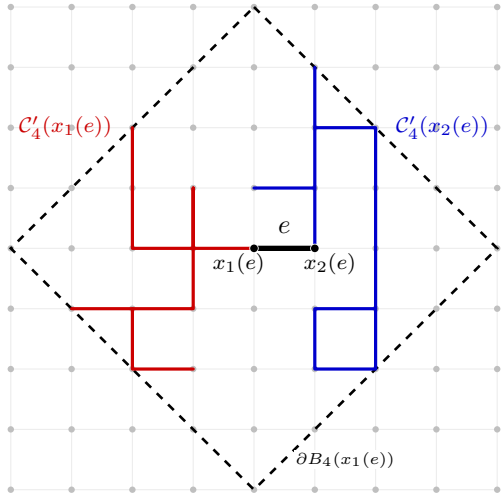

    Now, we need the following lemma.

    \begin{lemma}
      \label{Growthlemma} Let $f$ be the growth function of a quasi-transitive
      graph. The function $f$ has subexponential growth if and only if
      there exists a function $g:\mathbb{N}\to\mathbb{N}$ satisfying the
      following conditions:
      \begin{enumerate}
        \item \label{itm:g_infty} $\lim\limits_{r\to\infty}g(r)=\infty$;

        \item \label{itm:g_bound} $0<g(r)<r$ for all sufficiently large $r$;

        \item \label{itm:g_limit} $\displaystyle \lim\limits_{r\to\infty}
          \frac{f(r)-f(r-g(r))}{f(r)}=0$.

        \item \label{itm:f(2g)} For any fixed $k \geq 0$,
          $\lim\limits_{r\to\infty}\frac{f(2g(r)+k)}{f(r)}= 0,$
      \end{enumerate}
    \end{lemma}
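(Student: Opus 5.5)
The converse direction is the easy one, so I would do it first. Suppose $g$ satisfies (1)--(4). Since $f$ is nondecreasing and $1\le g(r)<r$ for large $r$, we have $f(r-g(r))\le f(r-1)\le f(r)$. Condition~(3) then forces $f(r-1)/f(r)\to 1$, that is, $\log f(r)-\log f(r-1)\to 0$. A Cesàro argument applied to the telescoping sum $\log f(r)=\log f(0)+\sum_{s\le r}(\log f(s)-\log f(s-1))$ gives $\frac1r\log f(r)\to 0$, which is subexponential growth. Quasi-transitivity is used only to make $f$ independent of the base vertex up to bounded shifts.

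For the forward direction, write $h(r):=\log f(r)$, so that $h(r)/r\to 0$ and $h$ is nondecreasing with $h(r)\to\infty$ (the graph is infinite). I would construct $g$ as the minimum of two slowly growing functions, one for each of conditions (3) and (4).

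\emph{Condition (4).} Let $g_{4}(r):=\max\{m\le \sqrt r:\ f(3m)\le f(r)^{1/2}\}$. Since $f(r)\to\infty$ and $f(3m)$ is finite for each fixed $m$, we get $g_{4}(r)\to\infty$. For fixed $k$ and $r$ large, $2g_{4}(r)+k\le 3g_{4}(r)$, hence
\[
\frac{f(2g_{4}(r)+k)}{f(r)}\le f(r)^{-1/2}\to 0.
\]
Both this property and condition~(2) survive if $g_{4}$ is replaced by any smaller function that still tends to infinity.

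\emph{Condition (3).} Here I need a $g_{3}\to\infty$ with $h(r)-h(r-g_{3}(r))\to 0$. It suffices to show $\delta(r):=h(r)-h(r-1)\to 0$, because then I can choose $g_{3}(r)\to\infty$ slowly enough that $g_{3}(r)\max_{s\in[r/2,r]}\delta(s)\to 0$. Finally I set $g:=\min(g_{3},g_{4})$.

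The step $\delta(r)\to 0$ is where I expect the real difficulty. It says exactly that $|B_{r}\setminus B_{r-1}|/|B_{r}|\to 0$ along \emph{all} $r$, i.e.\ that the full sequence of balls is Følner. Subexponential growth alone only gives that the averages of $\delta$ tend to $0$, so there is a subsequence of radii along which $\delta(r)\to 0$ (and balls are Følner). My first attempt would be to exploit the sub-multiplicativity that quasi-transitivity provides, $f(a+b)\le C f(a)f(b)$, to rule out persistent jumps of $h$. I would try to combine this with the monotonicity of $h$, arguing that if $\delta(r_{i})\ge\eta$ on a sequence $r_{i}$, then covering $B_{r_{i}}$ by translated smaller balls propagates the jump and contradicts $h(r)=o(r)$.

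If this fails (and for groups of intermediate growth I suspect it may be genuinely open), the fallback is to prove the lemma along a subsequence $r_{i}\to\infty$ on which the averaging argument gives $\delta\to 0$ uniformly on windows $[r_{i}-g(r_{i}),r_{i}]$. I would then check that the proof of Theorem~\ref{teo:perc} only uses conditions (3) and (4) along such radii. For polynomial growth this problem disappears, since $f(r)\asymp r^{D}$ there and $\delta(r)=O(1/r)$ follows directly.
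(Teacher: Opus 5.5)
Your converse direction is correct: Ces\`aro applied to $\log f(r)-\log f(r-1)\to0$ gives $\frac1r\log f(r)\to0$ directly, which is cleaner than the paper's contradiction argument. Your $g_4$ handles condition (4) without needing the paper's bounds $f(m)\le c^m$ and $f(r)\ge r$. Your reduction of (3) to $\delta(r)\to0$ is the same as the paper's use of $M(\lfloor r/2\rfloor)$.

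The genuine gap is the step you flag yourself: nothing in the proposal proves $\delta(r)\to0$ along \emph{all} $r$. No cleverer choice of $g$ avoids it. Any $g$ satisfying (1)--(3) gives $f(r-g(r))\le f(r-1)\le f(r)$ for large $r$, hence $f(r-1)/f(r)\to1$. So the forward implication is equivalent to $|B_r\setminus B_{r-1}|/|B_r|\to0$, i.e.\ to the whole sequence of balls being F{\o}lner.

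Your planned attack via submultiplicativity cannot succeed by itself. Take $f(0)=1$ and $f(r)=2^{\lceil\log_2 r\rceil+1}$ for $r\ge1$. This $f$ is nondecreasing, satisfies $f(a+b)\le f(a)f(b)$ and $2r\le f(r)\le 4r$, yet $f(2^k+1)=2f(2^k)$ for every $k$.

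The same example shows your polynomial-growth remark is too quick. Two-sided bounds $f\asymp r^D$ do not control $f(r)/f(r-1)$. There you need Pansu's asymptotics $f(r)\sim cr^D$ or Tessera's sphere estimate $|B_r\setminus B_{r-1}|\le Cr^{-\varepsilon}|B_r|$ for doubling spaces.

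Your fallback proves a different statement, and it does not rescue Theorem~\ref{teo:perc}. In that proof the boundary-layer term $(|B_r|-|B_{r-g(r)}|)/|B_r|$ must vanish along every $r$, so you would only get the CLT along a subsequence of radii.

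The paper supplies exactly this step by asserting that in a quasi-transitive graph of subexponential growth the balls form a F{\o}lner sequence (citing Garrido, Thm.~3.8), and then reading off $\nabla(r)\to0$. After that, its construction of $g$ is essentially yours. Relative to the paper, the ingredient you are missing is this F{\o}lner property for the \emph{entire} sequence of balls.

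Be aware, though, that the standard argument behind the cited fact only shows $\liminf_r f(r+1)/f(r)=1$. That yields a F{\o}lner \emph{subsequence} of balls, not the full sequence. The full-sequence statement is known under polynomial growth (as above) but, to my knowledge, is open for groups of intermediate growth.

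Your diagnosis is therefore more accurate than the paper's proof. As written, the forward direction of the lemma, and with it Theorem~\ref{teo:perc}, is only justified for polynomial growth or for graphs in which the full sequence of balls is already known to be F{\o}lner.
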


    \begin{proof}
      First, we prove the existence of such a function $g$ assuming $f$
      has subexponential growth. The growth function $f$ of an infinite quasi-transitive
      graph with subexponential growth has the following properties:
      \begin{enumerate}
        \item \label{prop:monotonicity} {Monotonicity:} The function $f$
          is non-decreasing and $f(r) \to \infty$ as $r \to \infty$,
          since $G$ is infinite and connected.

        \item \label{prop:folner} If a quasi-transitive graph has subexponential
          growth then the sequence of its balls $(B_{r})$ is a F\o
          lner sequence. (\cite{garrido2013introduction}, Theorem 3.8).

          This implies that the boundary of the balls is
          asymptotically small relative to their volume. As a direct consequence,
          the relative growth rate
          \begin{equation*}
            \nabla(r):= \frac{f(r)-f(r-1)}{f(r-1)}. \label{def:delta}
          \end{equation*}
          satisfies
          \[
            \lim_{r\to\infty}\nabla(r) = \lim_{r\to\infty}\frac{f(r)-f(r-1)}{f(r-1)}
            = 0.
          \]
      \end{enumerate}

      \textit{ Construction of $g(r)$.} We first define a tail function
      $M(R)$ that measures the supremum of the growth rate from an index
      $R$ onward:
      \begin{equation*}
        \label{def:M}M(R):= \sup_{r\ge R}\nabla(r),\qquad R\in\mathbb{R}.
      \end{equation*}
      By definition, $M(R)$ is non-increasing, and by \eqref{prop:folner},
      $M(R)\downarrow 0$ as $R\to\infty$.

      Now, for each $r\ge 2$, we define $g(r)$ adaptively, based on the
      convergence speed of $M(R)$:
      \begin{equation*}
        g(r):= \min\!\left\{\left\lfloor \frac{r}{2}\right\rfloor,\left\lfloor
        \frac{1}{4}\log_{c}(r)\right\rfloor,\ \left\lfloor\, M\!\Big(\left
        \lfloor\frac{r}{2}\right\rfloor\Big)^{-\frac{1}{2}}\right\rfloor\right
        \},\label{def:g}
      \end{equation*}
      where $c = \max\{2,|N(x)| : x\in V\}$.

      From the definition, $0<g(r)\leq \lfloor r/2\rfloor<r$, yielding \eqref{itm:g_bound}.
      Since $\lim_{r\to\infty}\lfloor r/2 \rfloor = \infty$, we have that
      $\lim_{r\to\infty}M(\lfloor r/2 \rfloor) = 0$, which implies
      \[
        \lim_{r\to\infty}\lfloor M(\lfloor r/2 \rfloor)^{-1/2}\rfloor = \infty
        .
      \]
      Also, we have that
      $\lim_{r\to\infty}\left\lfloor \frac{1}{4}\log_{c}(r)\right\rfloor =
      \infty$. Since $g(r)$ is the minimum of three sequences that diverge
      to infinity, it also diverges, proving \eqref{itm:g_infty}.

      \textit{Verification of the Convergence Condition.} To prove \eqref{itm:g_limit},
      we express the difference as a sum of increments:
      \[
        f(r)-f(r-g(r))=\sum_{k=0}^{g(r)-1}\big(f(r-k)-f(r-k-1)\big).
      \]
      Dividing by $f(r)$, Property~\ref{prop:monotonicity} implies $f(r) \ge
      f(r-k-1)$, and thus
      \[
        \frac{f(r)-f(r-g(r))}{f(r)}\leq \sum_{k=0}^{g(r)-1}\frac{f(r-k)-f(r-k-1)}{f(r-k-1)}
        = \sum_{k=0}^{g(r)-1}\nabla(r-k).
      \]
      For each term $\nabla(r-k)$ in the sum, the index $m = r-k$ satisfies
      $r-g(r)+1 \leq m \leq r$. Since $g(r)\leq \lfloor r/2\rfloor$, the smallest
      index satisfies $m \ge r-(g(r)-1) \ge r-(\lfloor r/2\rfloor-1) \ge \lceil
      r/2 \rceil$. Thus, $m \ge \lfloor r/2 \rfloor$ for all $k$ in the sum.
      By the definition of $M(R)$ in \eqref{def:M}:
      \[
        \nabla(r-k)\ \leq \ M\!\Big(\Big\lfloor\frac{r}{2}\Big\rfloor\Big
        ) \quad\text{for all }k=0,\dots,g(r)-1.
      \]
      Substituting this uniform bound into the sum, which has $g(r)$ terms:
      \[
        \frac{f(r)-f(r-g(r))}{f(r)}\leq g(r)\cdot M\!\Big(\Big\lfloor\frac{r}{2}
        \Big\rfloor\Big).
      \]
      Then, we use the definition of $g(r)$ from \eqref{def:g}. By the definition
      of $g(r)$ we have
      \[
        g(r)\cdot M\!\Big(\Big\lfloor\frac{r}{2}\Big\rfloor\Big) \leq \left
        \lfloor M\!\Big(\Big\lfloor\frac{r}{2}\Big\rfloor\Big)^{-\frac{1}{2}}
        \right\rfloor \cdot M\!\Big(\Big\lfloor\frac{r}{2}\Big\rfloor\Big
        ).
      \]
      For $x>0$, the inequality
      $\lfloor x^{-1/2}\rfloor \cdot x \leq x^{-1/2}\cdot x = x^{1/2}$
      holds. With $x=M(\lfloor r/2\rfloor)$, we obtain:
      \[
        \frac{f(r)-f(r-g(r))}{f(r)}\ \leq \ M\!\Big(\Big\lfloor\frac{r}{2}
        \Big\rfloor\Big)^{1/2}.
      \]
      Since $M(R)\to 0$ as $R\to\infty$, the right-hand side tends to 0 as
      $r\to\infty$.

      Finally, considering the properties of the graph $G$, the volume of
      the ball must satisfy $c^{r}\geq f(r) \ge r$. Therefore,
      \[
        0 \leq \lim_{r\to\infty}\frac{f(2g(r) + k)}{f(r)}\leq \lim_{r\to\infty}
        \frac{f(3g(r))}{f(r)}\leq \lim_{r\to\infty}\frac{c^{\frac{3}{4}\log_c(r)}}{r}
        = \lim_{r\to\infty}\frac{r^{3/4}}{r}=0.
      \]

      Conversely, suppose that $f$ has exponential growth. To obtain a contradiction,
      assume there exists a function $g$ satisfying the desired conditions.
      By the definition of exponential growth, there exist
      $R_{0}\in \mathbb{N}$ and a constant $a > 1$ such that, for all $r \geq
      R_{0}$:
      \begin{equation*}
        f(r) > a^{r}.
      \end{equation*}

      By condition \ref{itm:g_limit}, we have:
      \[
        \lim_{r \to \infty}\frac{f(r - g(r))}{f(r)}= 1.
      \]

      Since $\lim_{r \to \infty}g(r) = \infty$, for sufficiently large $r$,
      we have $g(r) \geq 1$. Due to the monotonicity of $f$, it follows
      that $f(r - g(r)) \leq f(r - 1) \leq f(r)$. Dividing by $f(r)$, we
      obtain:
      \[
        \frac{f(r - g(r))}{f(r)}\leq \frac{f(r - 1)}{f(r)}\leq 1.
      \]

      So as $r \to \infty$, we conclude that:
      \begin{equation*}
        \lim_{r \to \infty}\frac{f(r - 1)}{f(r)}= 1 \implies \lim_{r \to
        \infty}\frac{f(r)}{f(r - 1)}= 1.
      \end{equation*}

      Let $\varepsilon = \frac{a - 1}{2}$. Since $a > 1$, it follows that $\varepsilon
      > 0$ and $1 + \varepsilon < a$. By the previous limit, there exists $R
      _{1}\in \mathbb{N}$ (with $R_{1}> R_{0}$) such that, for all
      $r \geq R_{1}$:
      \[
        \frac{f(r)}{f(r - 1)}< 1 + \varepsilon.
      \]

      For any $r > R_{1}$, we can express $f(r)$ as a telescoping product:
      \[
        f(r) = f(R_{1}) \prod_{s=R_1+1}^{r}\frac{f(s)}{f(s - 1)}.
      \]

      Applying the upper bound $(1 + \varepsilon)$ to each term of the product,
      we get:
      \[
        f(r) < f(R_{1}) (1 + \varepsilon)^{r - R_1}= \frac{f(R_{1})}{(1 +
        \varepsilon)^{R_1}}(1 + \varepsilon)^{r}.
      \]

      Combining this with the lower bound $f(r) > a^{r}$, we have:
      \[
        a^{r}< f(r) < C \cdot (1 + \varepsilon)^{r},
      \]
      where $C = \frac{f(R_{1})}{(1 + \varepsilon)^{R_1}}$ is a positive
      constant. Dividing the inequality by $a^{r}$ yields:
      \[
        1 < \frac{f(r)}{a^{r}}< C \left( \frac{1 + \varepsilon}{a}\right)
        ^{r}.
      \]

      Since $1 + \varepsilon < a$, the ratio $\frac{1 + \varepsilon}{a}$
      is strictly less than $1$. Therefore:
      \[
        \lim_{r \to \infty}C \left( \frac{1 + \varepsilon}{a}\right)^{r}=
        0.
      \]

      Taking $r \to \infty$, we obtain the contradiction $1 \leq 0$, establishing
      that no such function $g$ exists for graphs of exponential growth.
    \end{proof}

    Henceforth, we set $m = g(r)$ and write $D_{r}(e)$ for $D_{r}(e,g(r))$. The
    complementary event $D_{r}^{c}(e)$ occurs if and only if one of the following
    holds:
    \begin{itemize}
      \item there is a path between the endpoints of $e$ inside $B_{g(r)}(e
        )$ without passing through $e$;

      \item the cluster of one of its endpoints after the removal of $e$ remains
        completely in the interior of $B_{g(r)}(e)$.
    \end{itemize}
    We denote the first event by $J_{r}(e)$ and the second by $U_{r}(e)$, where
    \[
      J_{r}(e) = [\mathcal{C}'_{g(r)}(x_{1}(e)) \cap \mathcal{C}'_{g(r)}(x_{2}
      (e)) \ne \varnothing]
    \]
    and
    \[
      U_{r}(e) = [\mathcal{C}'_{g(r)}(x_{1}(e)) \cap \mathcal{C}'_{g(r)}(x_{2}
      (e)) = \varnothing] \cap [\exists i\in \{1,2\}\, \text{ s.t. }\,\mathcal{C}
      '_{g(r)}(x_{i}(e)) \subseteq B_{g(r)-1}(e)].
    \]

    Examples of these events are illustrated in Figures
    \ref{example_event_J} and \ref{example_event_H}, respectively.

    \begin{figure}[h!]
      \centering
      \begin{minipage}{0.48\textwidth}
        \centering
        \begin{tikzpicture}[>=Stealth, scale=0.45]
          \draw[gray!15, thin] (0,0) grid (8,8);
          \foreach \x in {0, ..., 8} \foreach \y in {0, ..., 8}
          \fill[gray!50] (\x,\y) circle (1.2pt);

          \coordinate (x1) at (4,4);
          \coordinate (x2) at (5,4);

          \draw[line width=0.8pt, black, dashed]
            (4,8) --
            (8,4) --
            (4,0) --
            (0,4) --
            cycle;
          \node[font=\tiny, fill=white, inner sep=0.5pt]
            at
            (5.5, 0.5)
            {$\partial B_{4}(x_{1}(e))$};

          \draw[line width=1.5pt, black] (x1) -- (x2);

          \begin{scope}[
            line width=1pt,
            line cap=round,
            line join=round
          ]
            \draw (x1) -- (3,4) -- (3,5) -- (3,3) -- (1,3);
            \draw (3,4) -- (2,4) -- (2,6);
            \draw (2,3) -- (2,2) -- (3,2);
          \end{scope}

          \begin{scope}[
            line width=1pt,
            line cap=round,
            line join=round
          ]
            \draw (x2) -- (5,7);
            \draw (5,6) --
              (6,6) --
              (6,3) --
              (6,2) --
              (5,2) --
              (5,3) -- (6,3);
            \draw (5,5) -- (3,5);
          \end{scope}

          \foreach \p in {x1, x2} { \fill[white] (\p) circle (1.8pt); \fill[black] (\p) circle (1.4pt); }

          \node[font=\tiny, above=1pt] at ($(x1)!0.5!(x2)$) {$e$};
          \node[font=\tiny] at (3.6,6) {$\mathcal{C}_{4}(x_{1}(e))$};
        \end{tikzpicture}
        \caption{Example of $J_{r}(e,4)$ event.}
        \label{example_event_J}
      \end{minipage}
      \hfill
      \begin{minipage}{0.48\textwidth}
        \centering
        \begin{tikzpicture}[>=Stealth, scale=0.45]
          \draw[gray!15, thin] (0,0) grid (8,8);
          \foreach \x in {0, ..., 8} \foreach \y in {0, ..., 8}
          \fill[gray!50] (\x,\y) circle (1.2pt);

          \coordinate (x1) at (4,4);
          \coordinate (x2) at (5,4);

          \draw[line width=0.8pt, black, dashed]
            (4,8) --
            (8,4) --
            (4,0) --
            (0,4) --
            cycle;
          \node[font=\tiny, fill=white, inner sep=0.5pt]
            at
            (5.5, 0.5)
            {$\partial B_{4}(x_{1}(e))$};
          \draw[line width=1.5pt, black] (x1) -- (x2);

          \begin{scope}[
            color=red!80!black,
            line width=1pt,
            line cap=round,
            line join=round
          ]
            \draw (x1) -- (3,4) -- (3,5) -- (3,3) -- (2,3);
            \draw (3,4) -- (1,4);
          \end{scope}

          \begin{scope}[
            color=blue!80!black,
            line width=1pt,
            line cap=round,
            line join=round
          ]
            \draw (x2) -- (5,7);
            \draw (5,6) --
              (6,6) --
              (6,3) --
              (6,2) --
              (5,2) --
              (5,3) -- (6,3);
            \draw (5,5) -- (4,5);
          \end{scope}

          \foreach \p in {x1, x2}
          { \fill[white] (\p) circle (1.8pt); \fill[black] (\p) circle (1.4pt); }

          \node[font=\tiny] at (3.7, 3.7) {$x_{1}(e)$};
          \node[font=\tiny] at (5.3, 3.7) {$x_{2}(e)$};
          \node[font=\tiny, above=1pt] at ($(x1)!0.5!(x2)$) {$e$};
          \node[text=red!80!black, font=\tiny]
            at
            (0.9,6.3)
            {$\mathcal{C}'_{4}(x_{1}(e))$};
          \node[text=blue!80!black, font=\tiny]
            at
            (7.1,6.3)
            {$\mathcal{C}'_{4}(x_{2}(e))$};
        \end{tikzpicture}
        \caption{Example of $U_{r}(e,4)$ event.}
        \label{example_event_H}
      \end{minipage}
    \end{figure}

    Then, $J_{r}(e)$ and $U_{r}(e)$ depend only on the state of the edges in
    $B_{g(r)}(e)$ and $D^{c}_{r}(e)$ is their disjoint union.

    Given a configuration $c =(c_{1}, \dots, c_{b_r})\in\{0,1\}^{b_r}$ of $G[
    B_{r}]$ we denote the variation in the number of clusters $K_{r}$ due to
    flipping the state of edge $e_{j}$ by
    \[
      \Delta_{r,j}(c) := K_{r}(c_{1},\dots,c_{j-1},c_{j},c_{j+1}, \dots,c_{b_r}
      )-K_{r}(c_{1},\dots,c_{j-1},\tilde{c}_{j},c_{j+1},\dots,c_{b_r}),
    \]
    where $\tilde{c}_{j}= 1 -c_{j}$. Note that $\delta_{r,j}$ is
    $\mathcal{F}_{j}$-measurable, i.e., it depends only on the status of the
    first $j$ edges. Given $c_{1}, \dots, c_{j}\in \{0,1\}$, we have
    \begin{equation}
      \label{eq:delta_def}\delta_{r,j}(c_{1},\dots,c_{j}) = \sum_{c_{j+1},
      \dots,c_{b_r}\in \{0,1\}}\Delta_{r,j}(c)\mathbb{P}_{p}(\tilde{c}_{j},
      c_{j+1}, \dots, c_{b_r}).
    \end{equation}
    We also define
    \begin{equation}
      \label{eq:deltap_def}\delta'_{r,j}(c_{1},\dots,c_{j}) = \sum_{c_{j+1},
      \dots,c_{b_r}\in \{0,1\}}\mathbbm{1}_{D^c_r(e_j)}(c_{1},\dots,c_{j-1}
      , c_{j+1}, \dots, c_{b_r})\Delta_{r,j}(c)\mathbb{P}_{p}(\tilde{c}_{j}
      , c_{j+1}, \dots, c_{b_r}),
    \end{equation}
    if $e_{j}\in B_{r-g(r)}$, and $\delta'_{r,j}=0$ otherwise.

    By the triangular inequality we have
    \begin{align}
      \frac{1}{\tau(p)b_{r}}\Biggl|\sum_{j=1}^{b_r}(\delta^{2}_{r,j}- \E_{p}[\delta^{2}_{r,j}])\Biggr| & \leq \frac{1}{\tau(p)b_{r}}\Biggl|\sum_{j=1}^{b_r}[(\delta'_{r,j})^{2}- \E_{p}[\delta'_{r,j}]^{2}]\Biggr|\label{eq:delta-1}   \\
                                                       & + \frac{1}{\tau(p)b_{r}}\Biggl|\sum_{j=1}^{b_r}[\E_{p}[\delta'_{r,j}]^{2}- \E_{p}[\delta^{2}_{r,j}]]\Biggl|\label{eq:delta-2} \\
                                                       & + \frac{1}{\tau(p)b_{r}}\Biggl|\sum_{j=1}^{b_r}(\delta^{2}_{r,j}- (\delta'_{r,j})^{2})\Biggl|\label{eq:delta-3}.
    \end{align}

    For the first term, by Chebyshev's inequality,
    \begin{align*}
      \mathbb{P}_{p}\Biggl(\frac{1}{\tau(p)b_{r}} & \Biggl|\sum_{j=1}^{b_r}[(\delta'_{r,j})^{2}- \E_{p}[\delta'_{r,j}]^{2}]\Biggl|> \varepsilon/3 \Biggr) \leq \frac{9}{(\varepsilon \tau(p)b_{r})^{2}}\Var\Biggl(\sum_{j=1}^{b_r}(\delta'_{r,j})^{2}\Biggl).
    \end{align*}
    Note that in the event $J_{r}(e_{j})$, the flipping of state of $e_{j}$
    does not change the number of clusters in $K_{r}$. In the event $U_{r}(e_{j}
    )$, it changes the number of clusters in $K_{r}$ by exactly one, as at least
    one of the clusters is not connected to the border of $B_{g(r)}(e)$.
    Moreover, if two edges $e_{j}, e_{k}$ satisfy $d(x_{1}(e_{j}),x_{1}(e_{k}
    ))> 2g(r)$, their associated variables are independent, and thus $\mathrm{Cov}
    \bigl((\delta'_{r,j})^{2}, (\delta'_{r,k})^{2}\bigr) = 0$. Using Hölder's
    inequality, we bound the variance term:
    \begin{align*}
      \Var\Biggl(\sum_{j=1}^{b_r}(\delta'_{r,j})^{2}\Biggl) & = \sum_{k=1}^{b_r}\sum_{j=1}^{b_r}\mathrm{Cov}\Bigl((\delta'_{r,j})^{2}, (\delta'_{r,k})^{2}\Bigl)                                \\
                                  & \leq \sum_{k=1}^{b_r}\sum_{j \,:\, d(x_1(e_j),x_1(e_k)) \leq 2g(r)}\mathrm{Cov}\Bigl((\delta'_{r,j})^{2}, (\delta'_{r,k})^{2}\Bigl)               \\
                                  & \leq \sum_{k=1}^{b_r}\sum_{j \,:\, d(x_1(e_j),x_1(e_k)) \leq 2g(r)}\E_{p}\Bigl[(\delta'_{r,j})^{2}(\delta'_{r,k})^{2}\Bigl]                   \\
                                  & \leq \sum_{k=1}^{b_r}\sum_{j \,:\, d(x_1(e_j),x_1(e_k)) \leq 2g(r)}\Bigl(\E_{p}\Bigl[(\delta'_{r,j})^{4}\Bigl]\E_{p}\Bigl[(\delta'_{r,k})^{4}\Bigl]\Bigl)^{1/2} \\
                                  & \leq \sum_{k=1}^{b_r}\sum_{j \,:\, d(x_1(e_j),x_1(e_k)) \leq 2g(r)}1                                              \\
                                  & \leq b_{r}\cdot \max_{x \in B_{r-g(r)}}b_{2g(r)}(x),
    \end{align*}
    where $b_{2g(r)}(x) = |\{e \in E: d(x_{1}(e),x) \leq 2g(r) \}|$.

    Since the graph is quasi-transitive, there exists $y \in V$ such that $b_{2g(r)}
    (y) = \sup_{x \in V}b_{2g(r)}(x)$. Let $\tilde{r}= d(y,o)$. Then for any
    $r > 2\tilde{r}$, we have
    \[
      b_{2g(r)+\tilde{r}}\geq b_{2g(r)}(y) \geq \sup_{x \in B_{r-g(r)}}b_{2g(r)}
      (x),
    \]
    so
    \begin{align*}
      \mathbb{P}_{p}\Biggl(\frac{1}{\tau(p)b_{r}}\Biggl|\sum_{j=1}^{b_r}[(\delta'_{r,j})^{2}- \E_{p}[\delta'_{r,j}]^{2}]\Biggl|> \varepsilon/3 \Biggl) & \leq \frac{9}{\varepsilon^{2}\tau^{2}(p)}\frac{b_{2g(r)+\tilde{r}}}{b_{r}}                        \\
                                                                               & \leq \frac{9c}{\varepsilon^{2}\tau^{2}(p)}\frac{|B_{2g(r)+\tilde{r}}|}{|B_{r}|}\underset{r \to \infty}{\longrightarrow}0,
    \end{align*}
    by Lemma \ref{Growthlemma}.

    For the second term \eqref{eq:delta-2} we note that
    \begin{align*}
      \frac{1}{\tau(p)b_{r}}\Biggl|\sum_{j=1}^{b_r}[\E_{p}[\delta'_{r,j}]^{2}- \E_{p}[\delta^{2}_{r,j}]]\Biggl| & \leq \frac{1}{\tau(p)b_{r}}\E_{p}\Biggl[\sum_{j=1}^{b_r}\bigl|\delta'_{r,j}- \delta_{r,j}\bigl|\cdot\bigl|\delta'_{r,j}+ \delta_{r,j}\bigl|\Biggl] \\
                                                            & \leq \frac{2}{\tau(p)b_{r}}\sum_{j=1}^{b_r}\E_{p}\bigl|\delta'_{r,j}- \delta_{r,j}\bigl|.
    \end{align*}
    We can separate this into two sums: one over the edges in $E_{r-g(r)}$, and
    the other over $E_{r}\setminus E_{r-g(r)}$, where
    $\delta'_{r,j}\equiv 0$. As $|\Delta_{r,j}(c)|\leq 1,$ for any $r, j$ and
    $c$, we have
    \[
      \E_{p}\bigl|\delta'_{r,j}- \delta_{r,j}\bigl| \leq \sum_{c \in \{0,1\}^{b_r}}
      \mathbbm{1}_{D_r(e_j)}(c)\mathbb{P}_{p}(c)=\mathbb{P}_{p}(D_{r}(e_{j}
      )).
    \]
    For the second sum (over $e_{j}\in E_{r}\setminus E_{r-g(r)}$), we have $\E
    _{p}\bigl|\delta'_{r,j}-\delta_{r,j}\bigl| = \E_{p}\bigl |\delta_{r,j}\bigl
    |\leq 1.$ Then there exist constants $a,b >0$ such that
    \[
      \frac{2}{\tau(p)b_{r}}\sum_{j=1}^{b_r}\E_{p}\bigl|\delta'_{r,j}- \delta
      _{r,j}\bigl| \leq a\sum_{i=1}^{n}\mathbb{P}_{p}(D_{r}(f_{i})) + b \frac{|B_{r}|-|B_{r-g(r)}|}{|B_{r}|}
      ,
    \]
    where for each $i$, $f_{i}$ is chosen as an edge in the $i$-th class
    under the $\Aut(G)$ action that minimizes the distance from the origin
    $o$. The right term of the sum goes to zero by Lemma \ref{Growthlemma}.

    It is well known that an infinite, connected, nonamenable graph must have
    exponential growth. Since $G$ is an amenable quasi-transitive graph, by \cite[Theorem
    7.6]{lyons2017probability}, there is at most one infinite cluster a.s. We
    show that this implies the first term $a\sum_{i=1}^{n}\mathbb{P}_{p}(D_{r}
    (f_{i}))$ also vanishes. The event $D_{r}(f_{i}) \cap [\omega(f_{i})=0]$
    implies the existence of two clusters of size at least $r$. Since this sequence
    of events is decreasing as $r\to\infty$, by the continuity of the
    probability measure and the independence between the events
    $D_{r}(f_{i})$ and $[\omega(f_{i})=0]$, we have:
    \begin{align*}
      \lim_{r\to\infty}\Prob_{p}(D_{r}(f_{i})) & = \frac{1}{\Prob_{p}(\omega(f_{i})=0)}\lim_{r\to\infty}\Prob_{p}(D_{r}(f_{i}) \cap [\omega(f_{i})=0]) \\
                           & =\frac{1}{1-p}\Prob_{p}\Biggl(\bigcap_{r=1}^{\infty}[D_{r}(f_{i})\cap [\omega(f_{i})=0]] \Biggr)    \\
                           & \leq \frac{1}{1-p}\Prob_{p}(\text{there exist at least two infinite clusters}) = 0.
    \end{align*}

    For the third term \eqref{eq:delta-3}, notice that
    \begin{align*}
      \mathbb{P}_{p}\Biggl(\frac{1}{\tau(p)b_{r}}\Biggl|\sum_{j=1}^{b_r}[\delta^{2}_{r,j}- (\delta'_{r,j})^{2}]\Biggl|> \varepsilon/3\Biggl) & \leq \frac{3}{\varepsilon}\E_{p}\Biggl[\frac{1}{\tau(p)b_{r}}\Biggl|\sum_{j=1}^{b_r}[\delta_{r,j}^{2}- (\delta'_{r,j})^{2}]\Biggl|\Biggl] \\
                                                                           & \leq \frac{3}{\varepsilon\tau(p)b_{r}}\E_{p}\Biggl[\sum_{j=1}^{b_r}|\delta^{2}_{r,j}- (\delta'_{r,j})^{2}|\Biggl],
    \end{align*}
    which is, up to constants, equal to an intermediate step of the second term.
  \end{proof}

  \section{ Functionals}
  \label{sec:functionals}

  Although the following theorem is originally stated in \cite{manfred259ergodic}
  in the context of topological groups and Haar measure, we present it here
  adapted to the setting of Cayley graphs. In this discrete and countable setting,
  the underlying group $\Gamma$ satisfies the original hypotheses, with its Haar
  measure simply coinciding with the counting measure.

  \begin{theorem}[\cite{manfred259ergodic}, Theorem 8.13]
    Let $G = \Cay(\Gamma, S)$ be an infinite, amenable Cayley graph, equipped
    with an independent Bernoulli bond percolation $(\Omega, \mathcal{F}, \Prob
    _{p})$. Then, for any F{\o}lner sequence $(A_{r})_{r\geq 1}$ in $V$ and any
    $L^{2}$ random variable $X:\Omega\to \mathbb{R}$, we have:
    \[
      \frac{1}{|A_{r}|}\sum_{x\in A_r}X\circ\varphi_{x^{-1}}\underset{L^2}{\longrightarrow}
      \E_{p}[X],
    \]
    as $r \to \infty$. \label{ergodic_amenable_theorem}
  \end{theorem}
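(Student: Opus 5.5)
We would follow von Neumann's Hilbert-space proof of the mean ergodic theorem, combined with ergodicity of the Bernoulli shift. The key steps are a Koopman representation, a splitting of $L^{2}$, and a triviality argument for invariant functions.

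\emph{Step 1: the Koopman representation.} On $L^{2}(\Omega,\Prob_{p})$ define $U_{x}X:=X\circ\tilde\varphi_{x^{-1}}$. Because $\Prob_{p}$ is a product measure invariant under any edge permutation induced by an automorphism, each $U_{x}$ is unitary. A direct check,
\[
(U_{x}U_{y}X)(\omega)=X(\tilde\varphi_{y^{-1}}\tilde\varphi_{x^{-1}}\omega)=X(\tilde\varphi_{(xy)^{-1}}\omega),
\]
shows that $x\mapsto U_{x}$ is a unitary representation of $\Gamma$. Let $T_{r}:=|A_{r}|^{-1}\sum_{x\in A_{r}}U_{x}$, so that $\|T_{r}\|\le 1$.

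\emph{Step 2: the splitting of $L^{2}$.} Write $L^{2}=\mathcal{I}\oplus\overline{\mathcal{B}}$, where:
\begin{itemize}
\item $\mathcal{I}$ is the space of $\Gamma$-invariant vectors;
\item $\mathcal{B}=\operatorname{span}\{Y-U_{g}Y: Y\in L^{2},\, g\in\Gamma\}$.
\end{itemize}
This is a genuine orthogonal decomposition. Indeed, $Z\perp (Y-U_{g}Y)$ for all $Y$ is equivalent to $U_{g}^{*}Z=Z$, that is, $U_{g^{-1}}Z=Z$; so $\mathcal{B}^{\perp}=\mathcal{I}$.

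On $\mathcal{I}$ we have $T_{r}Z=Z$. On a generator $Y-U_{g}Y$ of $\mathcal{B}$,
\[
T_{r}(Y-U_{g}Y)=\frac{1}{|A_{r}|}\Bigl(\sum_{x\in A_{r}}U_{x}Y-\sum_{x\in A_{r}g}U_{x}Y\Bigr),
\]
whose norm is at most $|A_{r}\triangle A_{r}g|\,\|Y\|/|A_{r}|$.

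This is where the form of the Følner condition matters. The paper's Følner sequences are defined through the graph boundary. Edges of $\Cay(\Gamma,S)$ join $h$ and $hs$, so every $h\in A_{r}$ with $hs\notin A_{r}$ lies in $\partial A_{r}$. This gives $|A_{r}s\triangle A_{r}|\le 2|\partial A_{r}|$, which is a \emph{right} Følner condition for generators. Writing a general $g$ as a word $s_{1}\cdots s_{k}$ and using
\[
|A_{r}g\triangle A_{r}|\le\sum_{i}|A_{r}s_{1}\cdots s_{i}\triangle A_{r}s_{1}\cdots s_{i-1}|=\sum_{i}|A_{r}s_{i}\triangle A_{r}|,
\]
extends it to all $g$. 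Hence $T_{r}\to 0$ on $\mathcal{B}$. Since the operators $T_{r}$ are uniformly bounded, the convergence extends to $\overline{\mathcal{B}}$. Consequently $T_{r}X\to PX$ in $L^{2}$, where $P$ is the orthogonal projection onto $\mathcal{I}$.

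\emph{Step 3: invariant functions are constant.} It remains to show that $\mathcal{I}$ consists of constants; then $PX=\E_{p}[X]$. It suffices to show that every invariant event $A$ has probability $0$ or $1$.
\begin{itemize}
\item Approximate $A$ by a cylinder event $C$ depending on a finite edge set $F$, with $\Prob_{p}(A\triangle C)<\varepsilon$.
\item Since $\Gamma$ is infinite and $G$ is locally finite, there is $x$ with $\varphi_{x}(F)\cap F=\varnothing$. Then $C$ and $\tilde\varphi_{x}C$ are independent.
\item By invariance of $A$ and of the measure, $\Prob_{p}(A\triangle\tilde\varphi_{x}C)<\varepsilon$.
\item Combining these gives $|\Prob_{p}(A)-\Prob_{p}(A)^{2}|\le 4\varepsilon$.
\end{itemize}
Letting $\varepsilon\to0$ gives $\Prob_{p}(A)\in\{0,1\}$. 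An invariant $L^{2}$ function has invariant level sets, so it is a.s.\ constant.

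\emph{Main obstacle.} The step we expect to need the most care is the orientation bookkeeping in Step 2. One must confirm that the averaging $X\circ\varphi_{x^{-1}}$ over $x\in A_{r}$ pairs with right translates $A_{r}g$, which is exactly what the graph-boundary Følner condition controls. Once the conventions are fixed so that this holds, the ergodicity step in Step 3 is routine.
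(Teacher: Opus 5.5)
Your proof is correct, and it gives an argument where the paper gives none: the paper only quotes the mean ergodic theorem for amenable groups from Einsiedler--Ward. Your Step~2, the splitting $L^{2}=\mathcal I\oplus\overline{\mathcal B}$, is the standard Hilbert-space argument behind that cited result, so there the two routes coincide.

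Your write-up adds two things the citation does not supply.
\begin{itemize}
\item Step~3, the $0$--$1$ law for the Bernoulli action, is exactly what is needed to replace the projection $PX$ by $\E_p[X]$. The paper's adaptation takes this step without comment.
\item You check that the graph-boundary F{\o}lner condition from the preliminaries is the right-sided one, via $|A_rs\triangle A_r|\le 2|\partial A_r|$. This is what makes averages of $X\circ\tilde\varphi_{x^{-1}}=U_xX$ work. With only the left-sided algebraic condition $|gA_r\triangle A_r|=o(|A_r|)$ quoted for Cayley graphs, your coboundary estimate would not go through, so this check matters.
\end{itemize}

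There is one small slip. The equality $|A_rs_1\cdots s_i\triangle A_rs_1\cdots s_{i-1}|=|A_rs_i\triangle A_r|$ is false in a non-abelian group. Put $w=s_1\cdots s_{i-1}$; right-multiplying both sets by $w^{-1}$ turns the left side into $|A_rws_iw^{-1}\triangle A_r|$.

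The inequality $|A_rg\triangle A_r|\le\sum_i|A_rs_i\triangle A_r|$ is still true. Telescope through the suffixes $A_rs_is_{i+1}\cdots s_k$ instead, using $|A_rs_iw'\triangle A_rw'|=|A_rs_i\triangle A_r|$ for $w'=s_{i+1}\cdots s_k$.

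Equivalently, $Y-U_{s_1\cdots s_k}Y=\sum_{i=1}^{k}(Z_i-U_{s_i}Z_i)$ with $Z_i=U_{s_{i+1}\cdots s_k}Y$. So $\mathcal B$ is already spanned by coboundaries of generators, and only $g\in S$ needs to be handled.
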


  \begin{proof}[Proof of Theorem \ref{TLC_func}]
    Since $H$ is an LO subgroup, we fix a total ordering $\leq_{H}$ on $H$. We
    extend this to a total order on $\Gamma$ lexicographically: for
    $g, g' \in \Gamma$, let $g = hx_{i}$ and $g' = \bar{h}x_{j}$ be their unique
    decompositions with $h, \bar{h}\in H$. We set $g \leq g'$ if $i < j$, or
    if $i = j$ and $h \leq_{H}\bar{h}$. This vertex ordering induces a total
    order $\preccurlyeq$ on $E$. For each edge $e=\{u,v\} \in E$, let
    $\min(e)$ and $\max(e)$ denote the smaller and larger endpoints of $e$
    with respect to $\leq$, respectively. We define $e \preccurlyeq f$ if
    $(\min(e), \max(e)) \leq_{\text{lex}}(\min(f), \max(f))$. It follows from
    the left-invariance of $\leq_{H}$ that the orderings $\leq$ on $V$ and $\preccurlyeq$
    on $E$ are left-invariant under $H$ action.

    Let $E_{r}$ and $E_{r}(x)$ denote the sets of edges with endpoints in $A_{r}$
    and $xA_{r}$, respectively.

    Define the filtration $(\mathcal{F}_{e})_{e \in E}$ by setting
    $\mathcal{F}_{0}= \{\varnothing, \Omega\}$ and
    $\mathcal{F}_{e}= \sigma(\{ \omega_{e'}: e' \preccurlyeq e \})$. For a fixed $r
    > 0$, let $E_{r}= \{e_{1}, \dots, e_{b_r}\}$ be the set of edges with
    both endpoints in $A_{r}$, ordered according to the restriction of $\preccurlyeq.$
    To simplify notation, we write $D_{j}, \omega^{j}$, and
    $\mathcal{F}_{j}$ for $D_{e_j}, \omega^{e_j}$, and $\mathcal{F}_{e_j}$,
    respectively.

    \begin{figure}[htbp]
      \centering
      \begin{tikzpicture}[
        scale=0.6,
        every node/.style={font=\small},
        vtx/.style={circle,inner sep=0pt,minimum size=3.2pt,fill=black},
        revealed/.style={blue!65!black,line width=1.2pt,},
        current/.style={red,line width=2.0pt},
        unrevealed/.style={gray!55,line width=0.6pt},
        axislabel/.style={font=\footnotesize,gray!70}
      ]
        \def\xmin{-3} \def\xmax{4} \def\ymin{-3} \def\ymax{3} \def\a{1} \def\b{1}
        \pgfmathtruncatemacro{\xmaxmone}{\xmax-1} \pgfmathtruncatemacro{\ymaxmone}{\ymax-1}
        \pgfmathtruncatemacro{\amone}{\a-1} \pgfmathtruncatemacro{\bmone}{\b-1}
        \foreach \x in {\xmin,...,\xmaxmone} { \foreach \y in {\ymin,...,\ymax} { \draw[unrevealed] (\x,\y) -- (\x+1,\y); } }
        \foreach \x in {\xmin,...,\xmax} { \foreach \y in {\ymin,...,\ymaxmone} { \draw[unrevealed] (\x,\y) -- (\x,\y+1); } }
        \foreach \x in {\xmin,...,\amone} { \foreach \y in {\ymin,...,\ymax} { \draw[revealed] (\x,\y) -- (\x+1,\y); } }
        \foreach \x in {\xmin,...,\amone} { \foreach \y in {\ymin,...,\ymaxmone} { \draw[revealed] (\x,\y) -- (\x,\y+1); } }
        \foreach \y in {\ymin,...,\ymaxmone} { \draw[revealed] (\a,\y) -- (\a,\y+1); }
        \foreach \y in {\ymin,...,\b} { \draw[revealed] (\a,\y) -- (\a+1,\y); }
        \foreach \y in {\ymin,...,\bmone} { \draw[revealed] (\a+1,\y) -- (\a+1,\y+1); }
        \foreach \x in {\xmin,...,\xmax} { \foreach \y in {\ymin,...,\ymax} { \node[vtx] at (\x,\y) {}; } }
        \coordinate (L) at (\a,\b);
        \coordinate (R) at (\a+1,\b);
        \coordinate (M) at ($(L)!0.5!(R)$);
        \draw[current] (L) -- (R);
        \node[red, below=3pt] at (M) {$e_{j}$};
        \def\stabr{2.35}
        \draw[dashed]
          ($(M)+(\stabr,0)$) --
          ($(M)+(0,\stabr)$) --
          ($(M)+(-\stabr,0)$) --
          ($(M)+(0,-\stabr)$) --
          cycle;
        \node[above=9pt]
          at
          ($(M)+(0,\stabr)$)
          {\footnotesize stabilization};
        \begin{scope}[shift={(5.95,2.15)}]
          \draw[black!40] (0,-0.8) rectangle (7.6,1.55);
          \draw[revealed] (0.15,0.9) -- (0.55,0.9);
          \node[anchor=west]
            at
            (0.7,0.9)
            {Preceding edges ($e \prec e_{j}$)};
          \draw[unrevealed] (0.15,0.2) -- (0.55,0.2);
          \node[anchor=west]
            at
            (0.7,0.2)
            {Succeeding edges ($e \succ e_{j}$)};
          \draw[current] (0.15,-0.4) -- (0.55,-0.4);
          \node[anchor=west] at (0.7,-0.4) {Edge $e_{j}$};
        \end{scope}
        \begin{scope}[
          draw=gray!55,
          line width=0.6pt,
          dash pattern=on 2pt off 2pt,
          line cap=round
        ]
          \def\stub{1} \foreach \y in {\ymin,...,\ymax} { \draw (\xmax,\y) -- ++(\stub,0); }
          \foreach \x in {\xmin,...,\xmax} { \draw (\x,\ymax) -- ++(0,\stub); }
          \foreach \x in {\xmin,...,\xmax} { \draw (\x,\ymin) -- ++(0,-\stub); }
          \foreach \y in {\ymin,...,\ymax} { \draw (\xmin,\y) -- ++(-\stub,0); }
        \end{scope}
        \node[axislabel] at (\xmax+0.9,\ymax+0.45) {$\cdots$};
        \node[axislabel] at (\xmin-0.9,\ymax+0.45) {$\cdots$};
        \node[axislabel] at (\xmax+0.9,\ymin-0.45) {$\cdots$};
        \node[axislabel] at (\xmin-0.9,\ymin-0.45) {$\cdots$};
        \node at (\xmin-0.3, \ymax+0.95) {$\mathbb{Z}^{2}$};
      \end{tikzpicture}
      \caption{Lexicographic martingale filtration on $\mathbb{Z}^{2}$: the
      preceding edges (blue) represent the history $\mathcal{F}_{j-1}$,
      the red edge is the current edge $e_{j}$, and the gray edges are the
      succeeding. The dashed diamond is the $d_{G}$-ball (graph-distance ball)
      around $e_{j}$, depicting the stabilization neighborhood.}
      \label{fig:filtration_stabilization}
    \end{figure}
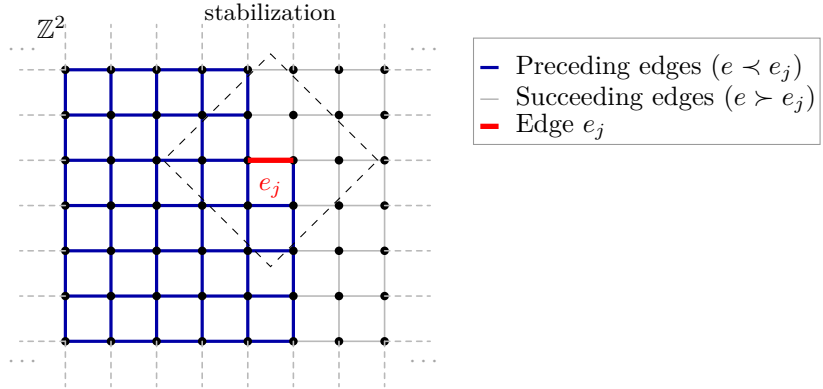

    Note that for each $r > 0$, since $F(A_{r})$ depends only on the edges
    within $A_{r}$, it is independent of any edges outside $A_{r}$ that lie between
    $e_{j-1}$ and $e_{j}$ in the global ordering $\preccurlyeq$. So, the sequence
    \[
      \delta_{r,j}:= \E_{p}[ F(A_{r}) \mid \mathcal{F}_{j}] - \E_{p}[ F(A_{r}
      ) \mid \mathcal{F}_{j-1}], \quad j = 1, \dots, b_{r},
    \]
    forms a martingale difference sequence with respect to the filtration $\{
    \mathcal{F}_{j}\}$. By the telescoping property, we have that $F(A_{r}) -
    \E_{p}[F(A_{r})] = \sum_{j=1}^{b_r}\delta_{r,j}$. Hence, by the orthogonality
    of martingale differences, we obtain
    \[
      \operatorname{Var}_{p}(F(A_{r})) = \sum_{j=1}^{b_r}\E_{p}[\delta_{r,j}
      ^{2}].
    \]

    By applying the martingale central limit theorem \cite{mcleish1974dependent},
    it suffices to verify the following conditions as $r \to \infty$:
    \begin{enumerate}
      \item $\sup_{r\geq 1}\E_{p}\left[ \max_{1\leq j \leq b_r}|A_{r}|^{-1}
        \delta_{r,j}^{2}\right] < \infty,$

      \item $\max_{1\leq j \leq b_r}|A_{r}|^{-1/2}|\delta_{r,j}| \xrightarrow
        {\Prob_p}0,$

      \item $|A_{r}|^{-1}\sum_{j=1}^{b_r}\delta_{r,j}^{2}\xrightarrow{\Prob_p}
        \sigma^{2}.$
    \end{enumerate}

    Let $S$ be the finite, symmetric generating set such that $G = \Cay (\Gamma
    , S)$.

    To construct an $H$-fundamental set of edges, fix
    $X = \{x_{1}, \dots, x_{n}\}$ a set of right coset representatives of
    $H$ in $\Gamma$. Define the map $\phi: H \times X \times S \to E$ by $\phi
    (h, x_{i}, s) = \{hx_{i}, hx_{i}s\}$, where $E$ denotes the edge set of
    $\Cay(\Gamma, S)$. This map is well-defined and surjective. For any
    $e = \{g, gs\} \in E$, the unicity of the representation $g = hx_{i}$ yields
    $e = \phi(h, x_{i}, s)$.

    Each edge $e \in E$ has exactly two distinct pre-images. Indeed, if $e =
    \{hx_{i}, hx_{i}s\}$, it can also be represented by the vertex $hx_{i}s$.
    Letting $\bar{h}x_{j}$ be the unique representation of $hx_{i}s$ in $Hx_{j}$,
    for some $x_{j}$, we have $e = \{ \bar{h}x_{j}, \bar{h}x_{j}s^{-1}\} = \phi
    (\bar{h}, x_{j}, s^{-1})$. These pre-images $p_{1}= (h, x_{i}, s)$ and
    $p_{2}= (\bar{h}, x_{j}, s^{-1})$ are distinct, otherwise $s = s^{-1}$ and
    $hx_{i}s = hx_{i}$, implying $s = 1 \notin S$.

    Furthermore, any pre-image $(\tilde{h}, x_{k}, t)$ must satisfy either $(
    \tilde{h}, x_{k}) = (h, x_{i})$ and $t = s$, or
    $(\tilde{h}, x_{k}) = (\bar{h}, x_{j})$ and $t = s^{-1}$. By the uniqueness
    of the coset representation, we have that $\phi$ is 2-to-1.

    Thus, the set
    \[
      \tilde{E}= \phi(1, X, S) = \{ \{x_{i}, x_{i}s\} : x_{i}\in X, s \in S
      \}
    \]
    forms an $H$-fundamental set of edges on $E$. More specifically, under
    the action of $H$, each unoriented edge has exactly two representations in
    the parametrization
    \[
      (h,x_{i},s)\mapsto \{hx_{i},hx_{i}s\}
    \], corresponding to the two orientations.

    For each $1 \leq j \leq b_{r}$, we fix an element $h_{j}\in H$ such that
    $e_{j}= \{h_{j}^{-1}x_{i}, h_{j}^{-1}x_{i}s\}$ for some $x_{i}\in X, s \in
    S$, and let $\varphi_{j}$ denote the translation $\varphi_{h_j}$. Since $\tilde
    {E}$ is finite, $\tilde{E}\subseteq E_{r}$ for all sufficiently large
    $r$. Under the restriction of $\preccurlyeq$ to $E_{r}$, we denote by
    $\varphi(j)$ the index of the fundamental edge associated with $e_{j}$.

    For notational simplicity, we suppress the $r$-dependence of $h_{j}, \varphi
    _{j}$, and $\varphi(j)$.

    Let $\omega \in \Omega$. The martingale difference can be expressed as:
    \begin{align*}
      \delta_{r,j}(\omega) & = \sum_{c_j, c_{j+1}, \dots, c_{b_r} \in \{0,1\}}[F((\omega_{1}, \dots, \omega_{j},c_{j+1}, \dots, c_{b_r}),A_{r}) - F((\omega_{1}, \dots, c_{j},c_{j+1}, \dots, c_{b_r}),A_{r})] \\
                 & \quad \times \Prob_{p}(c_{j}, c_{j+1}, \dots, c_{b_r})                                                              \\
                 & = \sum_{c_j, c_{j+1}, \dots, c_{b_r} \in \{0,1\}}D_{j}((\omega_{1}, \dots, \omega_{j},c_{j+1}, \dots, c_{b_r}),A_{r})\Prob_{p}(c_{j}, c_{j+1}, \dots, c_{b_r})          \\
                 & = \E_{p}[D_{j}(A_{r})\mid\mathcal{F}_{j}](\omega).
    \end{align*}

    For the first condition, it follows from conditional Jensen's inequality,
    the tower property, and the fact that every vertex has degree $|S|$, that
    \begin{align*}
      |A_{r}|^{-1}\E_{p}\biggl[\max_{1\leq j\leq b_r}\delta^{2}_{r,j}\biggr] & \leq \frac{|S|}{b_{r}}\sum_{j=1}^{b_r}\E_{p}[\delta^{2}_{r,j}]              \\
                                           & \leq \frac{|S|}{b_{r}}\sum_{j=1}^{b_r}\E_{p}[\E^{2}_{p}[D_{j}(A_{r})\mid\mathcal{F}_{j}]] \\
                                           & \leq \frac{|S|}{b_{r}}\sum_{j=1}^{b_r}\E_{p}[D^{2}_{j}(A_{r})].
    \end{align*}

    Now, fix $e_{j}\in E_{r}$. By the stationarity of the functional $F$, we
    have
    \begin{align*}
      D^2_{j}(\omega,A_{r}) & = [F(\omega,A_{r}) - F(\omega^{j},A_{r})]^2                                                  \\
                & = [F(\tilde{\varphi}_{j}(\omega),\varphi_{j}(A_{r})) - F((\tilde{\varphi}_{j}(\omega))^{\varphi(j)},\varphi_{j}(A_{r}))]^2 \quad \text{a.s.} \\
                & = D^2_{\varphi(j)}(\tilde{\varphi}_{j}(\omega),h_{j}A_{r}).
    \end{align*}
  Since $\tilde{\varphi}_{j}:\Omega\to\Omega$ is measure-preserving under
$\Prob_p$, we have
\[
\E_{p}[D^2_{j}(A_{r})] =
\E_{p}[D^2_{\varphi(j)}(\varphi_{j}(A_{r}))].
\]for
    $e_{\varphi(j)}\in \tilde{E}$. Then,
    \begin{align*}
      |A_{r}|^{-1}\E_{p}\biggl[\max_{1\leq j\leq b_r}\delta^{2}_{r,j}\biggr] & \leq |S|\max_{e \in \tilde{E}}\sup_{A\in \mathcal{A}}\E_{p}[D^{2}_{e}(A)] < \infty,
    \end{align*}
    by the finite moment assumption.

    Now, we verify condition 2. Let $\varepsilon > 0$. By Markov's
    inequality, we have
    \begin{align*}
      \Prob_{p}\biggl(\max_{1\leq j \leq b_r}|A_{r}|^{-1/2}|\delta_{r,j}| \geq \varepsilon\biggr) & \leq \sum_{j=1}^{b_r}\Prob_{p}\Bigl(|\delta_{r,j}|\geq\varepsilon |A_{r}|^{1/2}\Bigl)                       \\
                                                    & \leq \frac{1}{\varepsilon^{\gamma}|A_{r}|^{\gamma/2}}\sum_{j=1}^{b_r}\E_{p}\Bigl[|\delta_{r,j}|^{\gamma}\Bigl]          \\
                                                    & \leq \frac{1}{\varepsilon^{\gamma}|A_{r}|^{\gamma/2}}\sum_{j=1}^{b_r}\E_{p}\Bigl[|D_{j}(A_{r})|^{\gamma}\Bigl]          \\
                                                    & \leq \frac{1}{\varepsilon^{\gamma}|A_{r}|^{-1+\gamma/2}}\max_{e \in \tilde{E}}\sup_{A\in \mathcal{A}}\E_{p}[|D_{e}(A)|^{\gamma}],
    \end{align*}
    which vanishes as $r \to \infty$ for some $\gamma > 2$ given the finite moment
    condition.

    It remains to verify the third condition. Recall that for each
    $e_{j}\in E$, the sequence of random variables $(D_{j}(A_{r}))_{r\in\mathbb{N}}$
    is a.s. equal to a sequence of the form $D_{\varphi(j)}(h_{j}A_{r})$, where
    $e_{\varphi(j)}\in \tilde{E}$. By the sequential stability condition, this
    sequence converges in probability to a limiting random variable $D_{\varphi(j)}
    ^{\infty}$. Hence, there exists a random variable $D_{j}^{\infty}$ such that
    $D_{j}(A_{r}) \xrightarrow{\Prob_p}D_{j}^{\infty}$ as $r \to \infty$.

    For $x \in V, r \geq 1$ and $e_{j}\in E_{r}(x)$, let
    \[
      \delta^{x}_{r,j}(\omega)= \E_{p}[D_{j}(xA_{r})\mid\mathcal{F}_{j}](\omega
      )
    \]
    and
    \[
      \delta_{j}(\omega)= \E_{p}[D_{j}^{\infty}\mid\mathcal{F}_{j}](\omega)
      .
    \]

    Let $\mathfrak{F}_{j}= \{e \in E :e \prec e_{j}\}$. For any
    $\rho \in \{0,1\}^{E_r(x)\setminus \mathfrak{F}_{j-1}}$, let $\omega\mathbbm
    {1}_{E_r(x)\cap\mathfrak{F}_j}+\rho$ denote the configuration that coincides
    with $\omega$ on $E_{r}(x) \cap \mathfrak{F}_{j}$ and with $\rho$ on the
    remaining edges of $E_{r}(x)$. Furthermore, let $\omega\mathbbm{1}_{E_r(x)\cap\mathfrak{F}_{j-1}}
    +\rho$ denote the configuration that coincides with $\omega$ on $E_{r}(x)
    \cap \mathfrak{F}_{j-1}$ and with $\rho$ on the remaining edges of $E_{r}
    (x)$. Note that the only difference between
    $\omega\mathbbm{1}_{E_r(x)\cap\mathfrak{F}_j}+\rho$ and
    $\omega\mathbbm{1}_{E_r(x)\cap\mathfrak{F}_{j-1}}+\rho$ is that the
    former takes the value $\omega_{j}$ at $e_{j}$, while the latter takes the
    value $\rho_{j}$ at $e_{j}$. Then, the conditional expectation admits following
    the representation:
    \begin{align*}
      \E_{p}[D_{j}(xA_{r})\mid\mathcal{F}_{j}](\omega) & = \sum_{\rho \in \{0,1\}^{E_r(x)\setminus \mathfrak{F}_{j-1}}}[F(\omega\mathbbm{1}_{E_r(x)\cap\mathfrak{F}_j}+\rho,xA_{r})-F(\omega\mathbbm{1}_{E_r(x)\cap\mathfrak{F}_{j-1}}+\rho,xA_{r})]\Prob_{p}(\rho).
    \end{align*}
    Since $\varphi_{j}$ is a translation by an element of $H$, we have $e \prec
    e_{j}\iff \varphi_{j}(e) \prec \varphi_{j}(e_{j}) = e_{\varphi(j)}$.
    It follows that
    \begin{align*}
      \varphi_{j}(\mathfrak{F}_{j}) & = \{\varphi_{j}(e) \in E: e \prec e_{j}\}             \\
                      & = \{e\in E: \varphi^{-1}_{j}(e) \prec e_{j}\}           \\
                      & = \{e\in E: e \prec \varphi_{j}(e_{j})\}              \\
                      & = \{e\in E: e \prec e_{\varphi(j)}\} = \mathfrak{F}_{\varphi(j)}.
    \end{align*}

    Thus, for any $\omega \in \Omega$, the transformation $\tilde{\varphi}_{j}$
    yields:
   Let \(\tilde{\varphi}_j:=\tilde{\varphi}_{h_j}\). Since \(\varphi_j\) preserves the
order on edges, we have
\[
e \preccurlyeq e_j \iff \varphi_j(e) \preccurlyeq e_{\varphi(j)}.
\]
Hence
\[
\tilde{\varphi}_j^{-1}(\mathcal F_j)=\mathcal F_{\varphi(j)}.
\]

Moreover, by stationarity of \(F\),
\[
D_j(A_r)\circ \tilde{\varphi}_j^{-1}
=
D_{\varphi(j)}(h_jA_r).
\]
Since \(\tilde{\varphi}_j\) is measure-preserving under \(\Prob_p\), the conditional
expectation transforms according to
\[
\bigl(\E_p[D_j(A_r)\mid \mathcal F_j]\bigr)\circ \tilde{\varphi}^{-1}_{j}
=
\E_p[D_{\varphi(j)}(h_jA_r)\mid \mathcal F_{\varphi(j)}].
\]
Therefore,
\[
\delta_{r,j}\circ \tilde{\varphi}^{-1}_{j}
=
\E_p[D_{\varphi(j)}(h_jA_r)\mid \mathcal F_{\varphi(j)}]
=
\delta^{h_j}_{r,\varphi(j)}.
\]

    To obtain an analogous result for $\delta_{j}$, recall that
    $D_{j}(\omega,A_{r}) = D_{\varphi(j)}(\tilde{\varphi}_{j}(\omega),h_{j}A_{r}
    )$, where $e_{\varphi(j)}\in \tilde{E}$. The finite moment condition
    implies the existence of $\gamma > 2$ and $C > 0$ such that
    \[
      \sup_{r \geq 1}\E_{p}[|D_{j}(xA_{r})|^{\gamma}] = \sup_{r \geq 1}\E_{p}
      [|D_{\varphi(j)}((h_{j}x)A_{r})|^{\gamma}] < C,
    \]
    yielding uniform integrability of the sequence
    $(D_{j}(xA_{r}))_{r \geq 1}$. Since $D_{j}(xA_{r}) \xrightarrow{\Prob_p}D
    _{j}^{\infty}$, the sequence converges in $L^{1}$.

    Hence, by the conditional Jensen inequality and the tower property, we
    have
    \begin{align*}
      \E_{p}[|\delta^{x}_{r,j}-\delta_{j}|] & = \E_{p}[|\E_{p}[D_{j}(xA_{r}) - D_{j}^{\infty}\mid \mathcal{F}_{j}]|] \\
                          & \leq \E_{p}[|D_{j}(xA_{r}) - D_{j}^{\infty}|] \to 0
    \end{align*}
    as $r \to \infty$, which shows that $\delta_{r,j}^{x}\xrightarrow{\Prob_p}
    \delta_{j}$. Finally, since
    \[
      \delta_{r,j}\circ \tilde{\varphi}_{j}^{-1}\xrightarrow{\Prob_p}\delta
      _{j}\circ \tilde{\varphi}_{j}^{-1}
    \]
    and
    \[
      \delta_{r,\varphi(j)}^{h_j}\xrightarrow{\Prob_p}\delta_{\varphi(j)}.
    \]
    The identity
    \[
      \delta_{r,j}\circ \tilde{\varphi}_{j}^{-1}= \delta_{r,\varphi(j)}^{h_j}
      \quad\text{a.s.}
    \]
    implies that
    \[
      \delta_{j}\circ \tilde{\varphi}_{j}^{-1}= \delta_{\varphi(j)}\quad \text{a.s.}
    \]

    Next, we show that $(\delta_{r,j}^{x})^{2}$ converges to $\delta_{j}^{2}$
    in $L^{1}(\Prob_{p})$. By the Cauchy-Schwarz inequality,
    \begin{align*}
      \E_{p}[|(\delta_{r,j}^{x})^{2}-\delta^{2}_{j}|] & = \E_{p}[|\delta_{r,j}^{x}-\delta_{j}| \cdot |\delta_{r,j}^{x}+\delta_{j}|]  \\
                              & \leq \|\delta_{r,j}^{x}-\delta_{j}\|_{2}\|\delta_{r,j}^{x}+\delta_{j}\|_{2}.
    \end{align*}
    Observe that, by Jensen's inequality and the finite moment condition,
    \begin{align*}
      \E_{p}[|\delta_{r,j}^{x}+\delta_{j}|^{2}] & =\E_{p}\bigl[ \E_{p}^{2}[D_{j}(xA_{r})+D_{j}^{\infty}\mid\mathcal{F}_{j}]\bigr] \\
                            & \leq \E_{p}[(D_{j}(xA_{r})+D_{j}^{\infty})^{2}]                 \\
                            & = \E_{p}[(D_{\varphi(j)}((h_{j}x)A_{r})+D_{\varphi(j)}^{\infty})^{2}] < \infty.
    \end{align*}
    It remains to show that $\|\delta_{r,j}^{x}- \delta_{j}\|_{2}\to 0$ as $r
    \to \infty$. By the contractivity of the conditional expectation in
    $L^{2}$, we have
    \begin{align*}
      \|\delta_{r,j}^{x}-\delta_{j}\|_{2} & =\|\E_{p}[D_{j}(xA_{r})-D_{j}^{\infty}\mid \mathcal{F}_{j}]\|_{2} \\
                        & \leq \|D_{j}(xA_{r})-D_{j}^{\infty}\|_{2}.
    \end{align*}
    
    By the finite moment assumption, there exists $\gamma > 2$ such that
    \[
      \sup_{r\geq 1}\mathbb{E}_{p}[|D_{j}(xA_{r})|^{\gamma}] < \infty.
    \]
    Denoting $\phi(a) = a^{\gamma/2}$, we have
    \[
      \sup_{r\geq 1}\mathbb{E}_{p}[\phi(|D_{j}(xA_{r})|^{2})] < \infty.
    \]
    Thus, by \cite[Theorem 4.6.2]{durrett2019probability}, the sequence $|D_{j}
    (xA_{r})|^{2}$ is uniformly integrable.

    Since $D_{j}(xA_{r}) \xrightarrow{\Prob_p}D_{j}^{\infty}$, by
    \cite[Theorem 2.3.4]{durrett2019probability}, we have $|D_{j}(xA_{r}) - D
    _{j}^{\infty}|^{2}\xrightarrow{\Prob_p}0$. Next, we show that the sequence
    $(|D_{j}(xA_{r}) - D_{j}^{\infty}|^{2})_{r \geq 1}$ is uniformly integrable.
    Furthermore, the inequality 
\[
|D_j(xA_r)-D_j^\infty|^2 \le 2|D_j(xA_r)|^2 + 2|D_j^\infty|^2
\]
shows that, as the family $\bigl(|D_j(xA_r)|^2\bigr)_{r\ge1}$ is uniformly integrable and $|D_j^\infty|^2 \in L^1(\Prob_p)$, the family of squared differences 
\[
\bigl(|D_j(xA_r)-D_j^\infty|^2\bigr)_{r\ge1}
\]
is also uniformly integrable. In view of the fact that 
\[
|D_j(xA_r)-D_j^\infty|^2 \xrightarrow[r\to\infty]{\Prob_p} 0,
\]
by \cite[Theorem 4.6.3]{durrett2019probability} we conclude that $\E_p\left[|D_j(xA_r)-D_j^\infty|^2\right] \to 0$, which is to say that 
\[
D_j(xA_r) \xrightarrow[r \to \infty]{L^2(\Prob_p)} D_j^\infty.
\]
Hence,
    \[
      \frac{1}{|A_{r}|}\sum_{j=1}^{b_r}((\delta_{r,j}^{x})^{2}-\delta_{j}^{2}
      ) \xrightarrow{L^1}0.
    \]

    For $x_{i}\in X$ and $s$, let
    \[
      E_{i,s}:= \phi(H,x_{i},s) = \{\{hx_{i},hx_{i}s\}: h \in H\}
    \]
    denote the orbit of the edge $\{x_{i}, x_{i}s\}$ by $H$. Given a subset $A
    \subseteq V$, we define the restriction of $E_{i,s}$ to $A$ as
    \[
      E_{i,s}(A) = \{\{hx_{i},hx_{i}s\} \in E_{i,s}: hx_{i}\in A, hx_{i}s \in
      A\}.
    \]
    Also $H_{i,s}(A)$ be the set of group elements whose associated edges
    are contained in $A$, namely,
    \[
      H_{i,s}(A) = \{h \in H : \phi(h,x_{i},s) \subseteq A\}.
    \]

    Thus, we observe that the fundamental set of edges decomposes a F{\o}lner
    sequence into a finite family of F{\o}lner sequences.
    \begin{lemma}
      For any $(x_{i},s)\in X\times S$, the sequence
      $(H_{i,s}(A_{r}))_{r\ge 1}$ is a F{\o}lner sequence in $H$.
    \end{lemma}

    \begin{proof}
      Let
      \[
        Y:=S^{-1}\ \cup\ \{x_{i}^{-1}x_{j}:\ 1\le i,j\le n\}.
      \]
      The set $Y$ encapsulates both the local geometry of the graph and the
      algebraic transitions between the right cosets of $H$, thereby
      allowing for a comparison between the sizes of the intersections of
      the sequence with different cosets.

      Since $\Gamma$ is amenable and $Y$ is finite, we may fix a F{\o}lner
      sequence $(A_{r})$ in $\Gamma$ such that
      \begin{equation}
        \label{eq:right-inv-Y}\frac{|A_{r}y\triangle A_{r}|}{|A_{r}|}\longrightarrow
        0\qquad\text{for all }y\in Y.
      \end{equation}
      Indeed, for each $m\in\mathbb{N}$ there exists a finite set
      $A\subset\Gamma$ with $|gA\triangle A|<\frac{1}{m}|A|$ and
      $|Ag\triangle A|<\frac{1}{m}|A|$ for all $g\in Y$; choosing $A=A_{m}$
      gives \eqref{eq:right-inv-Y}.

      For each $i$ set
      \[
        K_{i,r}:=\{h\in H:\ hx_{i}\in A_{r}\}, \qquad\text{so that}\qquad
        |K_{i,r}|=|A_{r}\cap Hx_{i}| \ \ \text{and}\ \ |A_{r}|=\sum_{i=1}
        ^{n}|K_{i,r}|.
      \]
      For $y_{ij}:=x_{i}^{-1}x_{j}$, right-multiplication by $y_{ij}$ is a
      bijection $Hx_{i}\to Hx_{j}$. Hence
      \[
        |K_{i,r}|=|A_{r}\cap Hx_{i}|=\bigl|(A_{r}y_{ij})\cap Hx_{j}\bigr
        |.
      \]
      It follows from,
      \[
        \bigl||K_{i,r}|-|K_{j,r}|\bigr| \le |A_{r}y_{ij}\triangle A_{r}|
        =o(|A_{r}|),
      \]
      that \eqref{eq:right-inv-Y}. Since $\sum_{i=1}^{n}|K_{i,r}|=|A_{r}|$,
      it follows that
      \begin{equation}
        \label{eq:Ki-asymp}\frac{|K_{i,r}|}{|A_{r}|}\longrightarrow \frac{1}{n}
        \qquad (1\le i\le n).
      \end{equation}

      Now recall
      \[
        H_{i,s}(A_{r})=\{h\in H:\ hx_{i}\in A_{r},\ hx_{i}s\in A_{r}\}.
      \]
      If $h\in K_{i,r}\setminus H_{i,s}(A_{r})$, then $hx_{i}\in A_{r}$
      and $hx_{i}s\notin A_{r}$, i.e.
      $hx_{i}\in A_{r}\setminus A_{r}s^{-1}\subset A_{r}s^{-1}\triangle A_{r}$.
      Using the bijection $h\mapsto hx_{i}$ from $H$ onto $Hx_{i}$, we get
      \[
        |K_{i,r}\setminus H_{i,s}(A_{r})| =\bigl|\{hx_{i}:\ h\in K_{i,r}\setminus
        H_{i,s}(A_{r})\}\bigr| \le |A_{r}s^{-1}\triangle A_{r}| =o(|A_{r}
        |)
      \]
      by \eqref{eq:right-inv-Y}. Together with \eqref{eq:Ki-asymp} this yields
      \begin{equation}
        \label{eq:His-size}\frac{|H_{i,s}(A_{r})|}{|A_{r}|}\longrightarrow
        \frac{1}{n}.
      \end{equation}

      Finally, fix $k\in H$. If
      $h\in kH_{i,s}(A_{r})\triangle H_{i,s}(A_{r})$, then by unwinding the
      definition of $H_{i,s}(\cdot)$ one has
      \[
        hx_{i}\in kA_{r}\triangle A_{r}\quad\text{or}\quad hx_{i}s\in kA_{r}
        \triangle A_{r}.
      \]
      Using again the bijection $h\mapsto hx_{i}$,
      \[
        |kH_{i,s}(A_{r})\triangle H_{i,s}(A_{r})| \le |kA_{r}\triangle A_{r}
        |+|(kA_{r}\triangle A_{r})s^{-1}| =2|kA_{r}\triangle A_{r}|.
      \]
      Divide by $|H_{i,s}(A_{r})|$ and use \eqref{eq:His-size} and the (left)
      F{\o}lner property of $(A_{r})$ (applied to $k$) to obtain
      \[
        \frac{|kH_{i,s}(A_{r})\triangle H_{i,s}(A_{r})|}{|H_{i,s}(A_{r})|}
        \longrightarrow 0.
      \]
      Hence $(H_{i,s}(A_{r}))$ is F{\o}lner in $H$.
    \end{proof}
    Note that for an edge $e_{j}$, we previously denoted its associated fundamental
    edge by $e_{\varphi(j)}$. However, the fundamental edge associated with a
    given edge is determined not by the index $j$, but by the two classes
    $E_{i,s}$ to which $e_{j}$ belongs. Accordingly, for $e_{j}\in E_{i,s}$,
    we shall henceforth denote the index of its associated fundamental edge simply
    as $(i,s)$.

    Applying Theorem \ref{ergodic_amenable_theorem}, we obtain
    \begin{equation*}
      \frac{1}{|E_{i,s}(A_{r})|}\sum_{j : e_j \in E_{i,s}(A_r)}\delta_{j}^{2}
      = \frac{1}{|H_{i,s}(A_{r})|}\sum_{h_j \in H_{i,s}(A_r)}\delta_{(i,s)}
      ^{2}\circ \tilde{\varphi}_{h_j}^{-1}\xrightarrow{L^1}\E_{p}[\delta_{(i,s)}
      ^{2}].
    \end{equation*}

    The collection $E_{r}$ can be decomposed into the union of sets $E_{i,s}(
    A_{r})$ for $(x_{i},s) \in X \times S$, where each edge is indexed
    exactly twice. It follows that
    \begin{align*}
      \frac{1}{|A_{r}|}\sum_{j=1}^{b_r}\delta_{j}^{2} & = \frac{1}{2|A_{r}|}\sum_{(x_i,s) \in X \times S}\sum_{j : e_j \in E_{i,s}(A_r)}\delta^{2}_{j}                                                  \\
                              & = \frac{1}{2}\sum_{x_i \in X}\sum_{s \in S}\frac{|H_{i,s}(A_{r})|}{|A_{r}|}\left( \frac{1}{|H_{i,s}(A_{r})|}\sum_{h_j \in H_{i,s}(A_r)}\delta^{2}_{(i,s)}\circ \tilde{\varphi}^{-1}_{h_j}\right).
    \end{align*}

    Let
    \[
      a_{r}= \frac{|H_{i,s}(A_{r})|}{|A_{r}|}
    \]
    and
    \[
      Y_{r}= \frac{1}{|H_{i,s}(A_{r})|}\sum_{h_j\in H_{i,s}(A_r)}\delta^{2}
      _{(i,s)}\circ\tilde{\varphi}^{-1}_{h_j}.
    \]

    Setting $a = \frac{1}{n}$ and $Y = \E_{p}[\delta_{(i,s)}^{2}]$, we have
    $a_{r}\to a$ and, by Theorem~\ref{ergodic_amenable_theorem},
    $Y_{r}\xrightarrow{L^1(\Prob_p)}Y$. Moreover, by invariance,
    \[
      \E_{p}[|Y_{r}|]=\E_{p}[Y_{r}]=\E_{p}[\delta_{(i,s)}^{2}]<\infty \qquad
      \text{for all }r.
    \]
    Hence
    \begin{equation*}
      \E_{p}[|a_{r}Y_{r}- a Y|] \leq |a_{r}- a| \E_{p}[|Y_{r}|] + a \E_{p}[
      |Y_{r}- Y|] \to 0,
    \end{equation*}
    as $r \to \infty$. This establishes the $L^{1}$ convergence
    \begin{equation*}
      \frac{|H_{i,s}(A_{r})|}{|A_{r}|}\frac{1}{|H_{i,s}(A_{r})|}\sum_{h_j
      \in H_{i,s}(A_r)}\delta^{2}_{(i,s)}\circ \tilde{\varphi}^{-1}_{h_j}\xrightarrow
      {L^1}\frac{1}{n}\E_{p}[\delta_{(i,s)}^{2}],
    \end{equation*}
    which implies
    \begin{equation*}
      \frac{1}{|A_{r}|}\sum_{j=1}^{b_r}\delta_{j}^{2}\xrightarrow{L^1}\frac{1}{2n}
      \sum_{e \in \tilde{E}}\E_{p}[\delta_{e}^{2}].
    \end{equation*}
    Therefore, for any $x \in V$, we conclude that
    \begin{equation*}
      \frac{1}{|A_{r}|}\sum_{j=1}^{b_r}(\delta_{r,j}^{x})^{2}\xrightarrow{L^1}
      \frac{1}{2n}\sum_{e \in \tilde{E}}\E_{p}[\delta_{e}^{2}].
    \end{equation*}
    Observe that
    \begin{equation*}
      \frac{1}{|A_{r}|}\sum_{j=1}^{b_r}\delta_{r,j}^{2}\xrightarrow{\Prob_p}
      \frac{1}{2n}\sum_{e \in \tilde{E}}\E_{p}\left[ \E_{p}^{2}[D_{e}^{\infty}
      \mid \mathcal{F}_{e}] \right] = \sigma^{2}.
    \end{equation*}
    This completes the proof.
  \end{proof}

  We can also formulate an alternative result. By a standard double-counting
  argument on the $|S|$-regular graph $G$, we have $2 b_{r}= |S||A_{r}| - |\partial
  _{E}A_{r}|$, so that
  \begin{equation*}
    \frac{b_{r}}{|A_{r}|}= \frac{1}{2}\left( |S| - \frac{|\partial_{E}A_{r}|}{|A_{r}|}
    \right).
  \end{equation*}
  Since $(A_{r})$ is a F{\o}lner sequence, this boundary term vanishes asymptotically.
  Thus, the variance of
  \[
    \frac{F(A_{r})-\mathbb{E}_{p}[F(A_{r})]}{b_{r}^{1/2}}
  \]
  converges to
  \begin{align*}
    \lim_{r\to\infty}\frac{1}{b_{r}}\sum_{j=1}^{b_r}\delta_{r,j}^{2} & =\lim_{r\to\infty}\frac{|A_{r}|}{b_{r}}\frac{1}{|A_{r}|}\sum_{j=1}^{b_r}\delta_{r,j}^{2}=\frac{1}{n|S|}\sum_{e \in \tilde{E}}\E_{p}\left[ \E_{p}^{2}[D_{e}^{\infty}\mid \mathcal{F}_{e}] \right] \\
                                     & = \frac{1}{|\tilde{E}|}\sum_{e \in \tilde{E}}\E_{p}\left[ \E_{p}^{2}[D_{e}^{\infty}\mid \mathcal{F}_{e}] \right],
  \end{align*}
  noting that the final equality holds due to the bijection between $\tilde{E}$
  and $X \times S$.

  Since any Cayley graph is vertex-transitive, the assumption of
  subexponential growth implies that the sequence of metric balls
  $\{B_{n}\}_{n \in \mathbb{N}}$ constitutes a F{\o}lner sequence (see, e.g.,
  \cite[Theorem 3.8]{garrido2013introduction}). This leads to the following
  corollary.

  \begin{coro}
    \label{TLC_func_B_r} Assume $\Gamma$ has subexponential growth. Since
    the sequence of metric balls $(B_{r})$ is an exhaustive F{\o}lner
    sequence in any vertex-transitive graph, the conclusions of Theorem \ref{TLC_func}
    apply to $(B_{r})$. Specifically, under the same stabilization and moment
    conditions, we have
    \begin{enumerate}
      \item $\lim_{r\to\infty}|B_{r}|^{-1}\Var_{p}(F(B_{r})) = \sigma^{2}$,
        and

      \item $|B_{r}|^{-1/2}(F(B_{r}) - \E_{p}[F(B_{r})]) \overset{d}{\Longrightarrow}
        \mathcal{N}(0, \sigma^{2})$,
    \end{enumerate}
    where $\sigma^{2}$ is the asymptotic variance defined in Theorem
    \ref{TLC_func}.
  \end{coro}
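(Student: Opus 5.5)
The corollary is a direct specialization of Theorem~\ref{TLC_func} with $A_{r}:=B_{r}$. The only work is checking that the balls satisfy the hypotheses placed on the F{\o}lner sequence. The underlying hypotheses on $\Gamma$ (amenability and a finite-index left-orderable subgroup $H$) are carried over from Theorem~\ref{TLC_func} under ``the same conditions''. Amenability is in any case automatic, since a group of subexponential growth is amenable. I read ``under the same stabilization and moment conditions'' as: $F$ is a stationary $\mathcal{A}$-functional, with $\mathcal{A}=\{vB_{r}\}$, that stabilizes in sequence and satisfies the finite moment condition on the fundamental set $\tilde E$.

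The plan has three steps.

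\textbf{Step 1: the balls are F{\o}lner.} The Cayley graph $G$ is vertex-transitive, hence quasi-transitive. By \cite[Theorem 3.8]{garrido2013introduction} (already used in the proof of Lemma~\ref{Growthlemma}), subexponential growth gives $|\partial B_{r}|/|B_{r}|\to0$.

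Theorem~\ref{TLC_func} also uses the algebraic F{\o}lner property $|gA_{r}\triangle A_{r}|/|A_{r}|\to 0$, and the right-invariant version for finitely many $y$ in the lemma on $H_{i,s}(A_{r})$. I will derive both from the boundary condition:
\begin{itemize}
\item For the left version, write $g$ as a word of length $k=d_G(1,g)$. Each point of $gB_{r}\triangle B_{r}$ lies in $B_{r+k}\setminus B_{r-k}$. Iterating the ratio $f(r+1)/f(r)\to1$ (established in Lemma~\ref{Growthlemma}) gives $|B_{r+k}\setminus B_{r-k}|=o(|B_{r}|)$.
\item The right version holds because $B_{r}$ centered at $1$ satisfies $B_{r}^{-1}=B_{r}$, since $S$ is symmetric. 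Therefore $|B_{r}y\triangle B_{r}|=|y^{-1}B_{r}\triangle B_{r}|$, which reduces to the left case.
\end{itemize}

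\textbf{Step 2: exhaustiveness.} This is immediate: $B_{r}\subseteq B_{r+1}$, and $\bigcup_{r}B_{r}=V$ because $G$ is connected.

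\textbf{Step 3: apply the theorem.} Invoke Theorem~\ref{TLC_func} with $A_{r}=B_{r}$ and its orbit $\mathcal{A}=\{vB_{r}\}=\{B_{r}(v)\}$. This yields both conclusions with the same $\sigma^{2}$ formula, where $\tilde E$ and the $H$-invariant order are the ones built in that proof.

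The only point needing care is Step~1's passage from the vertex-boundary F{\o}lner condition to the algebraic left/right invariance actually used in the proof. This reduces to the thin-shell estimate $|B_{r+k}|/|B_{r-k}|\to1$ for fixed $k$, which follows from $\nabla(r)\to0$.
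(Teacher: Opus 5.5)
Your route is the paper's own. The paper's entire justification is the sentence before the corollary: balls are F{\o}lner by \cite[Theorem 3.8]{garrido2013introduction}, so Theorem~\ref{TLC_func} applies. Your Steps 2--3 match it.

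The problem is the single input everything rests on, which neither you nor the paper establishes. You need $|\partial B_r|/|B_r|\to0$ along \emph{all} radii, equivalently $f(r+1)/f(r)\to1$ for $f(r)=|B_r|$. Your shell bound $|B_{r+k}\setminus B_{r-k}|=o(|B_r|)$ is just this statement iterated.

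Subexponential growth only gives $\liminf_{r\to\infty}f(r+1)/f(r)=1$. If $f(r+1)\ge(1+\varepsilon)f(r)$ held for all large $r$, growth would be exponential. But nothing rules out $f(r+1)/f(r)>1+\varepsilon$ along a sparse set of radii. Accordingly, the standard proof that subexponential growth implies amenability produces only a F{\o}lner \emph{subsequence} of balls. To my knowledge, it is not known whether the full sequence of balls is F{\o}lner in every group of intermediate growth.

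Citing Lemma~\ref{Growthlemma} does not close this, since its proof derives $\nabla(r)\to0$ from the same citation. So you have inherited the paper's leap rather than introduced a new one, but Step~1 as written is unproved beyond polynomial growth.

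Here is what your argument does give.
\begin{itemize}
\item For polynomial growth (the setting of Theorem~\ref{TLC_funcionais_polinomial}), $|B_r|\sim c\,r^{D}$ (Pansu) yields $f(r+1)/f(r)\to1$, and your Steps 1--3 go through verbatim.
\item For general subexponential growth, $\liminf_r f(r+2k)/f(r)=1$ for each fixed $k$. So you can choose $r_1<r_2<\cdots$ with $f(r_k+k)/f(r_k-k)<1+1/k$. Then $(B_{r_k})$ is nested and exhaustive, $|\partial B_{r_k}|/|B_{r_k}|<1/k$, and your shell estimate holds for every fixed $g$.
\item The stabilization and moment hypotheses on the orbit of $(B_r)$ restrict to the orbit of $(B_{r_k})$. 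Hence Theorem~\ref{TLC_func} gives both conclusions, with the same $\sigma^2$, along $r=r_k$.
\end{itemize}
The statement for every $r\to\infty$ needs $f(r+1)/f(r)\to1$ as an additional hypothesis.

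Separately, your observation $B_r^{-1}=B_r$ is a real improvement over the paper. For balls, it supplies both invariances used in the lemma on $H_{i,s}(A_r)$: left invariance under $k\in H$ and right invariance under the finite set $Y$. The paper only gets these by re-choosing a two-sided F{\o}lner sequence inside that lemma.
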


  \begin{proof}[Proof of Theorem \ref{TLC_funcionais_polinomial}]
    We shall demonstrate that infinite, finitely generated groups of
    polynomial growth contain a left-orderable (LO) subgroup.

    \begin{theorem}[\cite{baumslag1971lecture}, Theorem 2.1]
      Let $\Gamma$ be a finitely generated nilpotent group. Then $\Gamma$
      is isomorphic to a finite-index subgroup of the direct product
      $D = A \times B$, where $A$ is finite and $B$ is torsion-free.
      \label{grupo_nilpotente_fatorado}
    \end{theorem}

    \begin{coro}
      Every finitely generated nilpotent group $\Gamma$ possesses a torsion-free,
      nilpotent subgroup of finite index. \label{grupo_nilpotente_torção}
    \end{coro}

    \begin{proof}
      By Theorem \ref{grupo_nilpotente_fatorado}, $\Gamma$ is, up to isomorphism,
      a finite-index subgroup of $D=A\times B$, where $A$ is finite and $B$
      is torsion-free. Let $H = \Gamma \cap B$. Note that $H$ is the intersection
      of two subgroups of $D$, hence it is a subgroup of $D$. Since $H \subseteq
      \Gamma$, it is a subgroup of $\Gamma$. Being a subgroup of $B$, $H$
      is torsion-free. Furthermore, since $\Gamma$ is nilpotent, $H$ is also
      nilpotent.

      We observe that $[D:B] = |A|$. Thus, the index of $H$ in $\Gamma$ satisfies:
      \[
        [\Gamma:H] = [D\cap\Gamma:B\cap\Gamma] \leq [D:B] = |A| < \infty.
      \]
    \end{proof}

    Recall that a central series of $\Gamma$ is a sequence of normal subgroups
    from $\Gamma$ down to the trivial group $\{1\}$, where each factor is
    the quotient of two consecutive subgroups. Our construction relies on the
    following structural property of these factors.

    \begin{theorem}[\cite{kargapolov1979fundamentals}, Theorem 17.2.2]
      Every finitely generated, torsion-free nilpotent group possesses a
      central series with infinite cyclic factors, i.e., factors isomorphic
      to $\mathbb{Z}$. \label{teorema de Malcev}
    \end{theorem}

    Given a set $A$, a (finite) coordinate system on $A$ is a collection of functions
    $f_{i}: A \to \mathbb{Z}$, with $i \in \{1,\dots, n\}$, such that the map
    $a \mapsto (f_{1}(a), \dots, f_{n}(a))$ is an injection from $A$ into $\mathbb{Z}
    ^{n}$.

    Let $\Gamma$ be a finitely generated, torsion-free nilpotent group and
    \[
      \{1\} = H_{n}\vartriangleleft H_{n-1}\vartriangleleft\dots\vartriangleleft
      H_{0}= \Gamma
    \]
    its central series with infinite cyclic factors. For each $i \in \{1, \dots
    , n\}$, the factor $H_{i-1}/H_{i}\simeq \mathbb{Z}$. Thus, there exists
    an element $a_{i}\in H_{i-1}$ such that its class $[a_{i}] = a_{i}H_{i}$
    is a generator of infinite order for the quotient.

    Hence, for any element $g\in H_{i-1}$, there exists a unique integer $k$
    such that $gH_{i}= a_{i}^{k}H_{i}$, implying $ga_{i}^{-k}\in H_{i}$.
    Thus, $H_{i-1}= \langle a_{i}, H_{i}\rangle$.

    By induction, we have:
    \begin{align*}
      H_{n-1}    & = \langle a_{n}, H_{n}\rangle = \langle a_{n}, \{1\}\rangle = \langle a_{n}\rangle, \\
      H_{n-2}    & = \langle a_{n-1}, H_{n-1}\rangle = \langle a_{n-1}, a_{n}\rangle,          \\
               & \vdots                                        \\
      \Gamma = H_{0} & =\langle a_{1}, \dots, a_{n}\rangle.
    \end{align*}

    Thus, each $g \in \Gamma$ can be uniquely expressed as
    \[
      g = a_{1}^{f_1(g)}a_{2}^{f_2(g)}\dots a_{n}^{f_n(g)}, \quad f_{i}(g)
      \in \mathbb{Z},
    \]
    where the sequence $a =(a_{1}, \dots, a_{n})$ is a \emph{Mal'cev basis}
    and the integers $f_{1}(g), \dots, f_{n}(g)$ are the \emph{Mal'cev
    coordinates} of $g$ relative to this basis.

    \begin{coro}
      Given a finitely generated group $\Gamma$ of polynomial growth,
      there exists a normal, finite-index, LO subgroup $H$. \label{coro_subgrupo_OE}
    \end{coro}

    \begin{proof}
      Let $\Gamma$ be a finitely generated group of polynomial growth. By
      Gromov's Theorem \cite{gromov1981groups}, $\Gamma$ has a nilpotent subgroup
      of finite index which, by Corollary \ref{grupo_nilpotente_torção}, contains
      a torsion-free, nilpotent normal subgroup $H \leq \Gamma$ of finite
      index.

      By Theorem \ref{teorema de Malcev}, there exists a Mal'cev basis $\{a
      _{1}, \dots, a_{n}\}$ for $H$. We define the positive cone
      $P \subset H$ as follows: an element $u \in H$, with $u \neq 1$, belongs
      to $P$ (denoted $u > 1$) if and only if, in its decomposition
      $u = \prod_{i=1}^{n}a_{i}^{u_i}$, the first non-vanishing coordinate
      $u_{i}$ is positive.

      Define the relation $u \leq v$ if and only if $u^{-1}v \in P \cup \{1
      \}$. We verify that this defines a left-invariant total order:
      \begin{enumerate}
        \item \textit{Reflexivity:} $u^{-1}u = 1$, hence $u \leq u$.

        \item \textit{Antisymmetry:} If $u \leq v$ and $v \leq u$, then $u
          ^{-1}v \in P \cup \{1\}$ and $v^{-1}u = (u^{-1}v)^{-1}\in P \cup
          \{1\}$. Due to the uniqueness of the Mal'cev basis, if the first
          non-vanishing coordinate of $x$ is positive, that of
          $x^{-1}$ is negative. Thus, the only possibility for both to
          be in $P \cup \{1\}$ is $u^{-1}v = 1$, implying $u=v$.

        \item \textit{Transitivity:} If $u \leq v$ and $v \leq w$, then $x
          = u^{-1}v \in P \cup \{1\}$ and $y = v^{-1}w \in P \cup \{1\}$.
          We wish to show $xy = u^{-1}w \in P \cup \{1\}$. The product
          of two elements whose first non-vanishing coordinates are positive
          results in an element with the same property (due to the
          structure of the central series where the quotients are
          abelian). Thus, $u \leq w$.

        \item \textit{Left-Invariance:}
          \begin{align*}
            u \leq v & \iff u^{-1}v \in P \cup \{1\} \iff u^{-1}w^{-1}wv \in P \cup \{1\} \\
                 & \iff (wu)^{-1}(wv) \in P \cup \{1\} \iff wu \leq wv.
          \end{align*}
      \end{enumerate}
      Therefore, $H$ is left-orderable.
    \end{proof}

    Hence, $\Gamma$ admits a finite-index LO subgroup $H$, and the result follows
    from Theorem \ref{TLC_func}.
  \end{proof}

  \section{Homology}
  \label{sec:homology}

  \begin{proof}[Proof of Theorem \ref{TLC_hom}]
    We start with a lemma on how cycles decompose across connected components.

    \begin{lemma}
      \label{lemma_connected_generator} If a disconnected cycle represents
      a nonzero homology class, it can be written as a sum
      $z=\sum_{i=1}^{k}z_{i}$ of chains with pairwise disjoint supports, each
      supported in a single connected component of $K$; each $z_{i}$ is a
      cycle; and at least one $z_{i}$ represents a nonzero homology class.
    \end{lemma}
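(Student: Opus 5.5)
The plan is to use the fact that the simplicial chain complex splits as a direct sum over connected components. Let $K$ be the finite subgraph in question, with connected components $K_{1},\dots,K_{k}$.

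\textbf{Splitting the chain groups.} By the connectivity axiom of a local graph simplicial complex rule, every simplex $\sigma\in\Delta(K)$ has all its vertices in a single component $K_{i}$. Each oriented $n$-simplex therefore belongs to exactly one $K_{i}$, and we get a direct-sum decomposition
\[
S_{n}(K)=\bigoplus_{i=1}^{k}S_{n}^{(i)},
\]
where $S_{n}^{(i)}$ is spanned by the $n$-simplices of $\Delta(K)$ with vertices in $K_{i}$.

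\textbf{Compatibility with the boundary.} Each face of a simplex has vertices in the same component as the simplex. Hence $\partial_{n}\bigl(S_{n}^{(i)}\bigr)\subseteq S_{n-1}^{(i)}$, so $\partial$ respects the decomposition.

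\textbf{Decomposing the cycle.} Given a cycle $z$, write $z=\sum_{i}z_{i}$ with $z_{i}\in S_{n}^{(i)}$, grouping the terms of $z$ by the component containing each simplex. The $z_{i}$ have pairwise disjoint supports. Moreover
\[
0=\partial z=\sum_{i}\partial z_{i}\quad\text{with}\quad \partial z_{i}\in S_{n-1}^{(i)},
\]
and the sum $\bigoplus_{i}S_{n-1}^{(i)}$ is direct, so $\partial z_{i}=0$ for each $i$. Thus every $z_{i}$ is a cycle. Discarding the indices with $z_{i}=0$ keeps only the components that actually carry $z$; since $z$ is disconnected, at least two summands remain.

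\textbf{Some $z_{i}$ is nontrivial in homology.} Suppose, for contradiction, that every $z_{i}$ were a boundary in $\Delta(K)$, say $z_{i}=\partial w_{i}$. Then
\[
z=\partial\Bigl(\sum_{i}w_{i}\Bigr)
\]
would be a boundary, contradicting the assumption that $z$ represents a nonzero class. So at least one $z_{i}$ represents a nonzero homology class.

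As a byproduct, projecting each $w_{i}$ onto its $i$-th summand shows that $H_{n}(K)\cong\bigoplus_{i}H_{n}^{(i)}$, which is what later arguments will use.

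\textbf{Main obstacle.} The only delicate point is justifying that faces stay within one component. This holds because each face of $\sigma$ is a subset of $\sigma$, hence its vertices lie in the same component as those of $\sigma$, and because $\Delta(K)$ is closed under taking faces. The rest is linear algebra.
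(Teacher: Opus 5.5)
Your proof is correct and follows essentially the same route as the paper: group the terms of $z$ by connected component, use the connectivity axiom (so faces of a simplex stay in its component) to see that the $\partial z_i$ cannot cancel one another, and conclude that if every $z_i$ were a boundary then $z$ would be one too. Your direct-sum splitting of the chain complex is a cleaner packaging of the paper's contradiction argument, which instead looks for an $(n-1)$-simplex shared between $\partial z_l$ and $\sum_{i\neq l}\partial z_i$.
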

    \begin{proof}
      Let $z = \sum_{j \in J}\rho_{j}\sigma_{j}$ be a cycle such that its homology
      class $[z] \neq 0$ in $H_{n}^{r}(K)$, where $K$ is a subgraph of $G$.
      Let $\{J_{i}\}_{i=1}^{k}$ be a partition of the index set $J$ such that
      each set of indices $J_{i}$ refers to simplices belonging to the same
      connected component of $K$. Define, for each $1\le i\le k$,
      \[
        z_{i}:= \sum_{j \in J_i}\rho_{j}\sigma_{j}.
      \]

      Suppose that
      \[
        \partial_{n}z_{l}= \sum_{j \in J_l}\rho_{j}\partial_{n}\sigma_{j}
        \neq 0
      \]
      for some $l$. Since $z$ is a cycle, we have
      \[
        \partial_{n}z = \sum_{i=1}^{k}\partial_{n}z_{i}= 0,
      \]
      which implies
      \[
        \sum_{i \neq l}\partial_{n}z_{i}= -\partial_{n}z_{l}\neq 0.
      \]
      This would imply the existence of an $(n-1)$-simplex in the support of
      $\partial_{n}z_{l}$ that also appears in the support of $\sum_{i
      \neq l}\partial_{n}z_{i}$. However, such a simplex would necessarily
      be a face of $n$-simplices belonging to distinct connected components,
      which contradicts the connectivity property of local simplicial
      complexes. Hence $\partial_{n}z_{i}=0$ for all $1\le i\le k$, so
      each $z_{i}$ is a cycle.

      Moreover, if every $z_{i}$ were trivial in homology (i.e.,
      $z_{i}\in \im \partial_{n+1}^{r}$ for all $i$), then their sum
      $z = \sum_{i=1}^{k}z_{i}$ would also be a boundary, contradicting the
      hypothesis that $[z] \neq 0$. Therefore, at least one $z_{i}$ must represent
      a nonzero homology class.
    \end{proof}

    Next, we bound the effect of a single-edge perturbation on the $n$-th Betti number.
    Fix an edge $e=\{u,v\}$ and set $\tilde G:=G\setminus\{e\}$. For an increasing
    exhaustive sequence $(A_{r})_{r\ge1}$, write $X_{r}:=\Delta(G[A_{r}])$
    and $\tilde X_{r}:=\Delta(\tilde G[A_{r}])$. We compare $X_{r}$ and $\tilde
    X_{r}$ via the long exact sequence of the pair $(X_{r},\tilde X_{r})$,
    which separates the change in $H_{n}$ into classes killed and classes
    created when $e$ is added. By monotonicity,
    $\tilde X_{r}\subseteq X_{r}$. We will show in Lemma~\ref{var_local_homologia_por_aresta}
    that the relative chain complex $S_{\bullet}(X_{r},\tilde X_{r})$ is generated
    inside a fixed neighborhood of $u$. This implies that its relative
    homology groups are eventually independent of $r$, and that the difference
    $\beta_{n}(X_{r})-\beta_{n}(\tilde X_{r})$ stabilizes as $r\to\infty$.

    \begin{lemma}
      \label{var_local_homologia_por_aresta} Let $G=(V,E)$ be a locally finite,
      quasi-transitive graph and let $\Delta$ be a local simplicial
      complex rule on $G$ with basic diameter $T$. Fix an edge $e=\{u,v\}\in
      E$ and set $\tilde G=(V,E\setminus\{e\})$. Let $(A_{r})_{r\ge1}$ be
      an increasing exhaustive sequence and write
      \[
        X_{r}:=\Delta(G[A_{r}]),\qquad \tilde X_{r}:=\Delta(\tilde G[A_{r}
        ]).
      \]
      Then there exist $r_{0}\in\mathbb{N}$ and
      $R_{\mathrm{stab}}\ge r_{0}$ such that:
      \begin{enumerate}
        \item the relative chain complex $S_{\bullet}(X_{r},\tilde X_{r})$
          is generated by simplices with vertices in $B_{T}(u)$ and the
          relative groups $H_{k}(X_{r},\tilde X_{r})$ are independent of
          $r$ for $r\ge r_{0}$;

        \item the difference $\beta_{n}(X_{r})-\beta_{n}(\tilde X_{r})$ is
          constant for all $r\ge R_{\mathrm{stab}}$.
      \end{enumerate}
      Moreover, there is a constant $C=C(n,\Delta)$ such that for all
      $r\ge r_{0}$,
      \[
        |\beta_{n}(X_{r})-\beta_{n}(\tilde X_{r})|\le C.
      \]
    \end{lemma}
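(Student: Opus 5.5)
The plan is to describe exactly which simplices separate $X_r$ from $\tilde X_r$, and then read off all three claims from the long exact sequence of the pair.

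\textbf{Step 1: locating the relative simplices.} Let $\sigma\in X_r\setminus\tilde X_r$. By confinement there is a subgraph $H_\sigma\subseteq G[A_r]$ of $G$-diameter at most $T$ with $\sigma\in\Delta(H_\sigma)$. If $e\notin H_\sigma$, then $H_\sigma\subseteq\tilde G[A_r]$, and monotonicity would give $\sigma\in\tilde X_r$, a contradiction. Hence $e\in H_\sigma$, so $u\in V(H_\sigma)$. Since $\sigma\subseteq V(H_\sigma)$ (because $\Delta(H)\subseteq\mathcal P(W)$), every vertex of $\sigma$ lies in $B_T(u)$.

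Now choose $r_0$ with $B_T(u)\subseteq A_{r_0}$, using exhaustiveness and the fact that $B_T(u)$ is finite by local finiteness. For $r\ge r_0$ I claim the set $X_r\setminus\tilde X_r$ does not depend on $r$. The intended argument is that both $X_r\cap\mathcal P(B_T(u))$ and $\tilde X_r\cap\mathcal P(B_T(u))$ stabilize. Here lies a subtlety: a simplex on $B_T(u)$ may have its witness $H_\sigma$ leaving $B_T(u)$, though only within distance $2T$ of $u$. So I will instead take $r_0$ with $B_{2T}(u)\subseteq A_{r_0}$. The confinement witness then lies in $G[B_{2T}(u)]$, hence in $G[A_{r_0}]$, and monotonicity gives membership for all larger $r$. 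The reverse direction is immediate from monotonicity. Consequently, the relative chain complex $S_\bullet(X_r,\tilde X_r)=S_\bullet(X_r)/S_\bullet(\tilde X_r)$ has the same basis, namely the finite set $Q:=X_{r_0}\setminus\tilde X_{r_0}$, with the same induced boundary maps, because a face of a simplex in $Q$ either lies in $Q$ or in $\tilde X_r$, and that dichotomy is also $r$-independent. This gives the first item of the lemma.

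\textbf{Step 2: the uniform bound $C$.} The long exact sequence of the pair contains the segment
\[
H_{n+1}(X_r,\tilde X_r)\xrightarrow{\ \partial\ } H_n(\tilde X_r)\to H_n(X_r)\to H_n(X_r,\tilde X_r).
\]
Exactness gives $|\beta_n(X_r)-\beta_n(\tilde X_r)|\le \dim H_{n+1}(X_r,\tilde X_r)+\dim H_n(X_r,\tilde X_r)$. Each of these dimensions is at most the number of subsets of $B_T(u)$ of size $n+1$ or $n+2$. By quasi-transitivity, $|B_T(u)|\le \max_x|B_T(x)|<\infty$, so this yields a constant $C=C(n,\Delta)$ valid for every $r\ge r_0$.

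\textbf{Step 3: eventual constancy of the difference.} By rank–nullity applied to the exact sequence,
\[
\beta_n(X_r)-\beta_n(\tilde X_r)=\dim\operatorname{im}\bigl(H_n(X_r)\to H_n(X_r,\tilde X_r)\bigr)-\dim\operatorname{im}\bigl(\partial:H_{n+1}(X_r,\tilde X_r)\to H_n(\tilde X_r)\bigr).
\]
Both terms are integers in $[0,C]$. The main obstacle is to show that each of them is eventually constant.

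For the connecting map, $\partial$ sends a relative class to the class in $H_n(\tilde X_r)$ of a cycle supported near $u$. Its kernel consists of relative classes whose boundary cycle bounds in $\tilde X_r$. Since $\tilde X_r$ increases with $r$, the kernel increases with $r$; it sits inside the fixed finite-dimensional space $H_{n+1}(X_r,\tilde X_r)\cong H_{n+1}(Q)$. So the rank of $\partial$ is non-increasing in $r$ and bounded below by $0$, hence eventually constant.

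Similarly, the image of $H_n(X_r)\to H_n(X_r,\tilde X_r)$ consists of relative cycles that lift to absolute cycles in $X_r$. The set of such liftable classes grows as $r$ grows, so this rank is non-decreasing and bounded by $C$, hence eventually constant.

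Naturality of the long exact sequence under the inclusions $(X_r,\tilde X_r)\hookrightarrow(X_{r+1},\tilde X_{r+1})$, which induce the identity on relative homology for $r\ge r_0$, is what makes both monotonicity statements legitimate. Taking $R_{\mathrm{stab}}$ to be the larger of the two stabilization indices gives the second item of the lemma.
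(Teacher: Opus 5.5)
Your proposal is correct and follows essentially the paper's route: you use confinement plus monotonicity to place $X_r\setminus\tilde X_r$ inside $B_T(u)$, then the long exact sequence of the pair, with increasing kernel/image chains in a fixed finite-dimensional relative homology space. Your choice of $r_0$ with $B_{2T}(u)\subseteq A_{r_0}$ is in fact a needed repair. The paper only requires $B_T(u)\subseteq A_r$ and then asserts that the induced subcomplexes on $B_T(u)$ are constant, which fails in general: for unit-square plaquettes on $\mathbb{Z}^2$ with $A_r=B_r$ and $e=\{(0,0),(1,0)\}$, the simplex $\{(1,0),(1,1)\}$ lies in $X_2\setminus\tilde X_2$ but also in $\tilde X_3$. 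Your bound via $\max_x|B_T(x)|$ also makes $C$ uniform in $u$, which the paper's $\dim V_n+\dim V_{n+1}$ does not show.
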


    \begin{proof}
      Choose $r_{0}$ such that $B_{T}(u)\subseteq A_{r}$ for all $r\ge r_{0}$
      (possible since $(A_{r})$ is exhaustive), and fix $r\ge r_{0}$.

      \smallskip
      \noindent
      \textit{Step 1: the relative complex is local and stabilizes.} Let $\sigma
      \in X_{r}\setminus \tilde X_{r}$. Then $\sigma\in\Delta(G[A_{r}])$
      but $\sigma\notin\Delta(\tilde G[A_{r}])$. By the confinement property
      (basic diameter $T$), there exists a subgraph
      $\Lambda_{\sigma}\subseteq G[A_{r}]$ with
      \[
        \sup_{x,y\in V(\Lambda_\sigma)}d_{G}(x,y)\le T \quad\text{and}\quad
        \sigma\in\Delta(\Lambda_{\sigma}).
      \]
      If $e\notin E(\Lambda_{\sigma})$ then
      $\Lambda_{\sigma}\subseteq \tilde G[A_{r}]$ and, by monotonicity,
      $\sigma\in\Delta(\Lambda_{\sigma})\subseteq \Delta(\tilde G[A_{r}])=\tilde
      X_{r}$, a contradiction. Hence $e\in E(\Lambda_{\sigma})$. In particular,
      $u\in V(\Lambda_{\sigma})$, and for every $w\in V(\Lambda_{\sigma})$
      we have $d_{G}(w,u)\le T$. Therefore all vertices of $\sigma$ lie in
      $B_{T}(u)$.

      Thus every simplex representing a nonzero class in the quotient
      \[
        S_{k}(X_{r},\tilde X_{r}):=S_{k}(X_{r})/S_{k}(\tilde X_{r})
      \]
      has its vertices in $B_{T}(u)$. Equivalently,
      $S_{\bullet}(X_{r},\tilde X_{r})$ is generated by simplices
      contained in $B_{T}(u)$.

      Let $L_{r}$ and $\tilde L_{r}$ be the induced subcomplexes of
      $X_{r}$ and $\tilde X_{r}$ on the vertex set $B_{T}(u)$. The inclusion
      of pairs $(L_{r},\tilde L_{r})\hookrightarrow (X_{r},\tilde X_{r})$ induces
      an isomorphism of relative chain complexes, hence
      \[
        H_{k}(L_{r},\tilde L_{r})\cong H_{k}(X_{r},\tilde X_{r})\qquad\text{for
        all }k.
      \]
      Since $B_{T}(u)\subseteq A_{r}$ for all $r\ge r_{0}$, the induced subgraphs
      $G[A_{r}][B_{T}(u)]$ and $\tilde G[A_{r}][B_{T}(u)]$ do not depend on
      $r$ for $r\ge r_{0}$. By the locality of $\Delta$, the pairs $(L_{r},
      \tilde L_{r})$ are therefore constant for $r\ge r_{0}$, and so are
      the groups $H_{k}(X_{r},\tilde X_{r})$.

      \smallskip
      \noindent
      \textit{Step 2: eventual stabilization of the Betti variation.} Consider
      the long exact sequence of the pair $(X_{r},\tilde X_{r})$:
      \[
        \cdots \to H_{n+1}(X_{r},\tilde X_{r})\xrightarrow{\ \partial_{n+1}^r\ }
        H_{n}(\tilde X_{r}) \xrightarrow{\ i_*^r\ }H_{n}(X_{r})\xrightarrow
        {\ q_*^r\ }H_{n}(X_{r},\tilde X_{r}) \xrightarrow{\ \partial_n^r\ }
        H_{n-1}(\tilde X_{r})\to\cdots .
      \]
      Exactness gives
      \[
        \ker(i_{*}^{r})=\im(\partial_{n+1}^{r}), \qquad \coker(i_{*}^{r})
        \cong \im(q_{*}^{r})=\ker(\partial_{n}^{r})\subseteq H_{n}(X_{r},
        \tilde X_{r}),
      \]
      and hence, over $\R$,
      \[
        \beta_{n}(X_{r})-\beta_{n}(\tilde X_{r})=\dim\coker(i_{*}^{r})-\dim
        \ker(i_{*}^{r}).
      \]

      By Step~1, the relative group $H_{n}(X_{r},\tilde X_{r})$ is
      constant and finite-dimensional for $r\ge r_{0}$; fix identifications
      \[
        H_{n}(X_{r},\tilde X_{r})\cong V_{n},\qquad r\ge r_{0}.
      \]
      Let $Z_{r}:=\ker(\partial_{n}^{r})\subseteq V_{n}$. If
      $[x]\in Z_{r}$, then $\partial_{n}^{r}([x])=0$ in $H_{n-1}(\tilde X_{r}
      )$, i.e., a relative cycle representative $x$ has boundary
      $\partial_{n}x$ that bounds in $\tilde X_{r}$. Since $\tilde X_{r}\subseteq
      \tilde X_{r+1}$, the same bounding chain works in $\tilde X_{r+1}$, hence
      $[x]\in Z_{r+1}$. Therefore,
      \[
        Z_{r_0}\subseteq Z_{r_0+1}\subseteq Z_{r_0+2}\subseteq \cdots \subseteq
        V_{n}.
      \]
      Because $V_{n}$ is finite-dimensional, this ascending chain
      stabilizes: there exists $r_{1}\ge r_{0}$ such that $Z_{r}=Z_{r_1}$ for
      all $r\ge r_{1}$. In particular, $\dim\coker(i_{*}^{r})=\dim Z_{r}$ is
      constant for $r\ge r_{1}$.

      Similarly, Step~1 yields a fixed finite-dimensional space
      $V_{n+1}\cong H_{n+1}(X_{r},\tilde X_{r})$ for $r\ge r_{0}$. Set $J_{r}
      :=\ker(\partial_{n+1}^{r})\subseteq V_{n+1}$. If $[\xi]\in J_{r}$, then
      $\partial_{n+1}^{r}([\xi])=0$ in $H_{n}(\tilde X_{r})$, and by
      inclusion $\tilde X_{r}\subseteq \tilde X_{r+1}$ we also have
      $\partial_{n+1}^{r+1}([\xi])=0$, hence $J_{r}\subseteq J_{r+1}$.
      Thus $(J_{r})$ stabilizes at some $r_{2}\ge r_{0}$. Since
      \[
        \dim\ker(i_{*}^{r})=\dim\im(\partial_{n+1}^{r})=\dim V_{n+1}-\dim
        J_{r},
      \]
      it follows that $\dim\ker(i_{*}^{r})$ is constant for all $r\ge r_{2}$.
      With $R_{\mathrm{stab}}:=\max\{r_{1},r_{2}\}$, we conclude that $\beta
      _{n}(X_{r})-\beta_{n}(\tilde X_{r})$ is constant for all $r\ge R_{\mathrm{stab}}$.

      \smallskip
      \noindent
      \textit{Uniform bound.} For every $r\ge r_{0}$, we have $\dim\coker(i
      _{*}^{r})\le \dim V_{n}$ and $\dim\ker(i_{*}^{r})\le \dim V_{n+1}$,
      hence
      \[
        |\beta_{n}(X_{r})-\beta_{n}(\tilde X_{r})| \le \dim V_{n}+\dim V_{n+1}
        =:C,
      \]
      where $C$ depends only on $n$ and on the fixed local pair induced by
      $\Delta$ on $B_{T}(u)$.
    \end{proof}

    The remainder of the argument parallels Theorem~\ref{TLC_func},
    requiring only that we check the stability and moment conditions for
    $\beta_{n}$. By the hypotheses of Theorem \ref{TLC_hom}, let
    $H \leq \Gamma$ be the finite-index left-orderable subgroup. Let $S$ be the
    finite symmetric generating set of the Cayley graph, and let
    $X = \{x_{1}, \dots, x_{m}\}$ be a set of representatives of the right
    cosets of $H$ in $\Gamma$. We choose the fundamental set of edges
    \[
      \tilde{E}= \bigl\{\{x_{i},x_{i}s\}: x_{i}\in X,\, s \in S\bigr\}.
    \]
    Define the Betti functional $\beta_{n}:\Omega \times \mathcal{A}\to \mathbb{R}$
    by $\beta_{n}(\omega,A):=\dim H_{n}\bigl(\Delta(G(\omega;A))\bigr)$. To
    apply Theorem \ref{TLC_func}, it suffices to verify that $\beta_{n}$ is stationary,
    stabilizes in sequence, and satisfies the finite moment condition on $\tilde
    {E}$.

    For any $u \in \Gamma$ and $A \in \mathcal{A}$, the subgraph
    $G(\omega;A)$ is isomorphic to
    $G(\tilde{\varphi}_{u}(\omega);\varphi_{u}(A))$, so by the equivariance property
    of the associated simplicial complexes, the Betti functional preserves
    its value under the group action. Thus, the functional is stationary.

    Let us denote the effect of altering the state of edge $e$ on the Betti
    functional as:
    \[
      D_{n,e}(\omega, xA_{r}) = \beta_{n}(\omega, xA_{r}) - \beta_{n}(\omega
      ^{e}, xA_{r}),
    \]
    where the subscript $n$ will henceforth be omitted, and $xA_{r}$ denotes
    the translation $\varphi_{x}(A_{r})$.

    \begin{claim}
      Given $e \in \tilde{E}$, there exists a random variable
      $D_{e}^{\infty}$ such that, for any $x \in V$, the sequence
      $(D_{e}(xA_{r}))_{r\in\mathbb{N}}$ converges in probability to
      $D_{e}^{\infty}$.
    \end{claim}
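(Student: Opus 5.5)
The plan is to prove the claim \emph{pathwise}, using Lemma~\ref{var_local_homologia_por_aresta}. I will show that for every fixed $\omega$ the sequence $D_{e}(\omega,xA_{r})$ is eventually constant in $r$. I will then show that its eventual value can be expressed purely in terms of the infinite percolation graph, which makes it independent of $x$. Almost sure convergence then gives convergence in probability.

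\emph{Step 1: reduction to the lemma.} Fix $\omega$ and $x\in V$. Since left translation $\varphi_x$ is a bijection preserving inclusions, $(xA_{r})_{r\ge1}$ is again increasing and exhaustive. If $\omega(e)=\omega^{e}(e)$, then $D_e(\omega,xA_r)=0$ for all $r$ and there is nothing to prove. Otherwise $D_e(\omega,xA_r)=\pm\bigl(\beta_n(X_r)-\beta_n(\tilde X_r)\bigr)$, where $X_r=\Delta(G(\omega_+;xA_r))$ and $\tilde X_r=\Delta(G(\omega_-;xA_r))$, with $\omega_\pm$ the configurations having $e$ open or closed respectively. The sign is determined by $\omega(e)$ and $\omega^{e}(e)$.

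The proof of Lemma~\ref{var_local_homologia_por_aresta} only uses monotonicity, connectivity and confinement of $\Delta$, together with the inclusion $\tilde X_r\subseteq X_r$. It therefore applies verbatim with the ambient graph $G$ replaced by the open subgraph $G(\omega_+)$ and $\tilde G$ replaced by $G(\omega_-)$. Consequently $D_e(\omega,xA_r)$ is constant for $r\ge R_{\mathrm{stab}}(\omega,x)$, so it converges almost surely, and in particular in probability.

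\emph{Step 2: identifying the limit independently of $x$.} This is the main obstacle, since $R_{\mathrm{stab}}$ and a priori the stable value depend on $x$. I would define the infinite complexes
\[
X_\infty:=\bigcup_{K}\Delta(K),\qquad \tilde X_\infty:=\bigcup_{\tilde K}\Delta(\tilde K),
\]
where $K$ ranges over finite subgraphs of $G(\omega_+)$ and $\tilde K$ over finite subgraphs of $G(\omega_-)$. Monotonicity makes these unions consistent. Confinement shows that every simplex already appears in some $\Delta(G(\omega_\pm;xA_r))$ for $r$ large, whatever $x$ is, because $xA_r$ exhausts $V$.

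The relative pair $(L,\tilde L)$ on $B_T(u)$ from Step~1 of the lemma no longer depends on $r$ or $x$ once $B_T(u)\subseteq xA_r$. Hence $V_n$ and $V_{n+1}$ are fixed. Next consider the ascending chains $Z_r\subseteq V_n$ and $J_r\subseteq V_{n+1}$. Membership in them is witnessed by a \emph{finite} bounding chain in $\tilde X_r$. Any finite chain in $\tilde X_\infty$ lies in some $\tilde X_r$. Therefore
\[
\bigcup_r Z_r=Z_\infty:=\ker\bigl(H_n(L,\tilde L)\to H_{n-1}(\tilde X_\infty)\bigr),
\]
and similarly $\bigcup_r J_r=J_\infty$. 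Both are independent of the exhausting sequence $(xA_r)$.

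Hence the stable value of the difference is
\[
D^\infty_e=\pm\bigl(\dim Z_\infty-(\dim V_{n+1}-\dim J_\infty)\bigr),
\]
which depends only on $\omega$ and $\omega^{e}(e)$, and not on $x$.

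\emph{Step 3: measurability.} It remains to check that $D^\infty_e$ is a random variable. It is the pointwise limit of the measurable functions $D_e(\cdot,A_r)$, each of which depends on finitely many edge states. Hence it is measurable.
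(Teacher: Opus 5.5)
Your argument is correct. Step 1 is the paper's proof: for fixed $\omega$, apply Lemma~\ref{var_local_homologia_por_aresta} to the pair $G(\omega_+;xA_r)\supseteq G(\omega_-;xA_r)$ along the increasing exhaustive sequence $(xA_r)$. This shows $D_e(\omega,xA_r)$ is eventually constant, hence converges almost surely.

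The real difference is in how you get independence of $x$. The paper settles this in one sentence. It asserts that the stabilized value depends only on $\omega$ in a fixed neighbourhood of $e$, and that it is attained as soon as $B_T(u)\subseteq xA_r$. That assertion is not accurate. Only the relative groups $V_n,V_{n+1}$ are local. The kernels $Z_r,J_r$ of the connecting maps record whether local cycles bound in $\tilde X_r$, which is a global question. For example, for the clique complex and $n=0$, $\beta_0(X_r)-\beta_0(\tilde X_r)$ equals $-1$ if $u$ and $v$ are not joined by an open path in $G(\omega_-;xA_r)$ and $0$ otherwise, and such a path may have to leave any fixed ball around $e$. Indeed, the lemma itself only gives constancy from $R_{\mathrm{stab}}\ge r_0$ onwards.

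Your Step 2 closes exactly this gap. You identify $\bigcup_r Z_r$ with $\ker\bigl(H_n(L,\tilde L)\to H_{n-1}(\tilde X_\infty)\bigr)$, and similarly for $J_r$. In effect you pass to $H_\ast(\tilde X_\infty)=\varinjlim_r H_\ast(\tilde X_r)$, which is the same for every exhausting sequence. This makes the $x$-independence rigorous. It also exhibits $D_e^\infty$ as a function of $\omega|_{E\setminus\{e\}}$ together with the two states of $e$.

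One minor correction, inherited from the paper's lemma. The pair $(L,\tilde L)$ is only guaranteed to be constant once $B_{2T}(u)\subseteq xA_r$, not $B_T(u)$. The reason is that a simplex contained in $B_T(u)$ may enter $\tilde X_r$ through a witnessing subgraph of diameter $T$ that reaches distance $2T$ from $u$. This only enlarges $r_0$ and leaves your argument intact.
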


    \begin{proof}
      Let $e=\{u,v\} \in \tilde{E}$ and $x \in V$. Consider the increasing
      exhaustive sequence $(xA_{r})_{r\ge1}$. By exhaustivity, there
      exists $r_{0}=r_{0}(x)$ such that $B_{T}(u)\subseteq xA_{r}$ for all
      $r\ge r_{0}$. For each fixed configuration $\omega$, apply Lemma~\ref{var_local_homologia_por_aresta}
      to the pair of graphs that differ by the single edge $e$ inside the
      induced subgraphs on $xA_{r}$. It follows that there exists $R_{\mathrm{stab}}
      =R_{\mathrm{stab}}(\omega,e,x)\ge r_{0}$ such that, for all
      $r\ge R_{\mathrm{stab}}$,
      \[
        D_{e}(\omega, xA_{r}) = \beta_{n}(\omega, xA_{r}) - \beta_{n}(\omega
        ^{e}, xA_{r})
      \]
      is constant. Hence $(D_{e}(xA_{r}))_{r\ge1}$ is eventually constant
      $\Prob_{p}$-a.s., and thus converges a.s. (in particular, in probability)
      to a limit random variable, denoted by $D_{e}^{\infty}$.

      Moreover, the stabilized value depends only on the restriction of
      $\omega$ to a fixed neighborhood of the edge $e$ (within the basic diameter
      $T$). In particular, once $B_{T}(u)\subseteq xA_{r}$ the value does
      not depend on the choice of $x$ or on further enlargements of
      $xA_{r}$, so the limit $D_{e}^{\infty}$ is independent of $x$.
    \end{proof}

    Finally, for any subgraph $G(\omega)$, the basic diameter of the
    simplicial complex rule $\Delta(G(\omega))$ is at most the basic
    diameter of $\Delta(G)$. Thus, by Lemma
    \ref{var_local_homologia_por_aresta}, the local sensitivity of the $n$-th
    Betti number to a single-edge perturbation is uniformly bounded by a constant.
    It follows immediately that there exists $\gamma > 2$ such that
    \[
      \max_{e \in \tilde{E}}\sup_{A\in \mathcal{A}}\E_{p}[|D_{e}(A)|^{\gamma}
      ] < \infty.
    \]

    Since $\beta_{n}$ is stationary, stabilizes in sequence, and satisfies the
    finite moment condition on $\tilde{E}$, Theorem~\ref{TLC_func} applies
    under an $H$-invariant edge ordering and filtration
    $(\mathcal{F}_{e})_{e\in E}$ as defined in Section~\ref{sec:functionals},
    yielding the desired result.
  \end{proof}

  \bibliographystyle{plain}
  \bibliography{referencias}

@article{zhang2001martingale,
  author    = {Zhang, Yu},
  title     = {A Martingale Approach in the Study of Percolation Clusters on the $\mathbb{Z}^d$ Lattice},
  journal   = {Journal of Theoretical Probability},
  volume    = {14},
  pages     = {165--187},
  year      = {2001},
  doi       = {10.1023/A:1007877216583},
  publisher = {Springer}
}

@article{mcleish1974dependent,
  author    = {McLeish, Donald L.},
  title     = {Dependent central limit theorems and invariance principles},
  journal   = {The Annals of Probability},
  volume    = {2},
  number    = {4},
  pages     = {620--628},
  year      = {1974},
  publisher = {Institute of Mathematical Statistics},
  doi={10.1214/aop/1176996608}
}

@article{CoxGrimmett1984,
  author  = {Cox, D. R. and Grimmett, G. R.},
  title   = {Central limit theorems for associated random variables and the percolation model},
  journal = {The Annals of Probability},
  volume  = {12},
  pages   = {514--528},
  year    = {1984},
  doi={10.1214/aop/1176993303}
}

@book{GrimmettBook,
  author    = {Grimmett, Geoffrey R.},
  title     = {Percolation},
  series    = {Grundlehren der mathematischen Wissenschaften},
  volume    = {321},
  publisher = {Springer},
  address   = {Berlin, Heidelberg},
  year      = {1999},
  edition   = {2}
}

@article{KestenZhang1990,
  author  = {Kesten, Harry and Zhang, Yu},
  title   = {The probability of a large finite cluster in supercritical {B}ernoulli percolation},
  journal = {The Annals of Probability},
  volume  = {18},
  pages   = {537--555},
  year    = {1990}
}

@article{KestenZhang1997,
  author  = {Kesten, Harry and Zhang, Yu},
  title   = {A central limit theorem for critical first-passage percolation in two dimensions},
  journal = {Probability Theory and Related Fields},
  volume  = {107},
  pages   = {137--160},
  year    = {1997}
}

@article{Penrose2001CLT,
author = {Mathew D. Penrose},
title = {{A Central Limit Theorem With Applications to Percolation, Epidemics and Boolean Models}},
volume = {29},
journal = {The Annals of Probability},
number = {4},
publisher = {Institute of Mathematical Statistics},
pages = {1515 -- 1546},
year = {2001},
doi = {10.1214/aop/1015345760},
URL = {https://doi.org/10.1214/aop/1015345760}
}

@book{lyons2017probability,
  author    = {Lyons, Russell and Peres, Yuval},
  title     = {Probability on Trees and Networks},
  series    = {Cambridge Series in Statistical and Probabilistic Mathematics},
  volume    = {42},
  publisher = {Cambridge University Press},
  year      = {2017}
}

@article{KM13,
  author  = {Kahle, Matthew and Meckes, Elizabeth},
  title   = {Limit theorems for {B}etti numbers of random simplicial complexes},
  journal = {Homology, Homotopy and Applications},
  volume  = {15},
  number  = {1},
  pages   = {343--374},
  year    = {2013}
}

@article{YSA17,
  author  = {Yogeshwaran, D. and Subag, Eliran and Adler, Robert J.},
  title   = {Random geometric complexes in the thermodynamic regime},
  journal = {Probability Theory and Related Fields},
  volume  = {167},
  number  = {1--2},
  pages   = {107--142},
  year    = {2017},
  doi     = {10.1007/s00440-015-0678-9}
}

@article{LRS19,
  author  = {Lachi{\`e}ze-Rey, Rapha{\"e}l and Schulte, Matthias and Yukich, J. E.},
  title   = {Normal approximation for stabilizing functionals in stochastic geometry},
  journal = {The Annals of Applied Probability},
  volume  = {29},
  number  = {2},
  pages   = {931--993},
  year    = {2019}
}

@article{HST18,
  author  = {Hiraoka, Yasuaki and Shirai, Tomoyuki and Trinh, Khanh Duy},
  title   = {Limit theorems for persistence diagrams},
  journal = {The Annals of Applied Probability},
  volume  = {28},
  number  = {5},
  pages   = {2740--2780},
  year    = {2018}
}

@article{KH22,
  author  = {Krebs, Johannes and Hirsch, Christian},
  title   = {Functional central limit theorems for persistent {B}etti numbers on cylindrical networks},
  journal = {Scandinavian Journal of Statistics},
  volume  = {49},
  number  = {1},
  pages   = {427--454},
  year    = {2022},
  doi     = {10.1111/sjos.12524}
}

@article{KP25,
  author  = {Krebs, Johannes and Polonik, Wolfgang},
  title   = {On the asymptotic normality of persistent {B}etti numbers},
  journal = {Advances in Applied Probability},
  volume  = {57},
  number  = {2},
  pages   = {492--523},
  year    = {2025},
  doi     = {10.1017/apr.2024.61}
}

@article{OS25,
  author    = {Owada, Takashi and Samorodnitsky, Gennady},
  title     = {Limit theorems for high-dimensional {B}etti numbers in the multiparameter random simplicial complexes},
  journal   = {Stochastic Processes and their Applications},
  volume    = {186},
  pages     = {104641},
  year      = {2025},
  doi       = {10.1016/j.spa.2025.104641},
  publisher = {Elsevier}
}

@article{OT21,
  author  = {Owada, Takashi and Thomas, Andrew M.},
  title   = {Limit theorems for process-level {B}etti numbers for sparse and critical regimes},
  journal = {Advances in Applied Probability},
  volume  = {52},
  number  = {1},
  pages   = {1--31},
  year    = {2020},
  doi     = {10.1017/apr.2019.50}
}

@article{SugimineTakei2006,
  author  = {Sugimine, Nobuaki and Takei, Masato},
  title   = {Remarks on {C}entral {L}imit {T}heorems for the {N}umber of {P}ercolation {C}lusters},
  journal = {Publications of the Research Institute for Mathematical Sciences},
  volume  = {42},
  number  = {1},
  pages   = {101--116},
  year    = {2006},
  doi = {10.2977/PRIMS/1166642060}
}

@article{PenrosePeres2011,
  author  = {Penrose, Mathew D. and Peres, Yuval},
  title   = {Local {C}entral {L}imit {T}heorems in {S}tochastic {G}eometry},
  journal = {Electronic Journal of Probability},
  volume  = {16},
  number  = {91},
  pages   = {2509--2544},
  year    = {2011},
  doi     = {10.1214/EJP.v16-968}
}

@book{manfred259ergodic,
  author    = {Einsiedler, Manfred and Ward, Thomas},
  title     = {Ergodic Theory with a View Towards Number Theory},
  series    = {Graduate Texts in Mathematics},
  volume    = {259},
  publisher = {Springer},
  year      = {2011}
}

@article{gromov1981groups,
  author  = {Gromov, Michael},
  title   = {Groups of polynomial growth and expanding maps (with an appendix by {J}acques {T}its)},
  journal = {Publications Math{\'e}matiques de l'IH{\'E}S},
  volume  = {53},
  pages   = {53--78},
  year    = {1981}
}

@book{baumslag1971lecture,
  author    = {Baumslag, Gilbert},
  title     = {Lecture Notes on Nilpotent Groups},
  series    = {CBMS Regional Conference Series in Mathematics},
  number    = {2},
  publisher = {American Mathematical Society},
  year      = {1971}
}

@book{kargapolov1979fundamentals,
  author    = {Kargapolov, Mikhail I. and Merzljakov, Jurij I.},
  title     = {Fundamentals of the Theory of Groups},
  series    = {Graduate Texts in Mathematics},
  volume    = {62},
  publisher = {Springer},
  year      = {1979}
}

@misc{garrido2013introduction,
  author       = {Garrido, Alejandra},
  title        = {An introduction to amenable groups},
  note = {Lecture notes},
  year         = {2013},
  url={https://www.math.uni-duesseldorf.de/~garrido/amenable.pdf}
}

@book{woess2000random,
  author    = {Woess, Wolfgang},
  title     = {Random Walks on Infinite Graphs and Groups},
  series    = {Cambridge Tracts in Mathematics},
  number    = {138},
  publisher = {Cambridge University Press},
  year      = {2000}
}

@article{grigorchuk1985degrees,
  author  = {Grigorchuk, Rostislav I.},
  title   = {Degrees of growth of finitely generated groups, and the theory of invariant means},
  journal = {Mathematics of the USSR-Izvestiya},
  volume  = {25},
  number  = {2},
  pages   = {259--300},
  year    = {1985}
}

@article{milnor1968,
  author    = {Milnor, John},
  title     = {Problem No. 5603},
  journal   = {The American Mathematical Monthly},
  volume    = {75},
  number    = {6},
  pages     = {685--687},
  year      = {1968},
  doi       = {10.2307/2313822},
  publisher = {Mathematical Association of America}
}

@misc{clay2023orderable,
  author       = {Clay, Adam},
  title        = {Orderable Groups Minicourse Notes},
    note = {Lecture notes},
  year         = {2023},
  url          = {https://adamjclay.github.io/montreal_notes.pdf}
}

@book{durrett2019probability,
  author    = {Durrett, Rick},
  title     = {Probability: Theory and Examples},
  edition   = {5},
  series    = {Cambridge Series in Statistical and Probabilistic Mathematics},
  volume    = {49},
  publisher = {Cambridge University Press},
  year      = {2019}
}

@article{BenjaminiSchramm1996,
  author  = {Benjamini, Itai and Schramm, Oded},
  title   = {Percolation beyond $\mathbb{Z}^d$, many questions and a few answers},
  journal = {Electronic Communications in Probability},
  volume  = {1},
  pages   = {71--82},
  year    = {1996}
}

@article{AVY18,
  author  = {Reddy, Agnishom and Vadlamani, Sunderesh and Yogeshwaran, D.},
  title   = {Central limit theorem for exponentially quasi-local statistics of spin models on {C}ayley graphs},
  journal = {Journal of Statistical Physics},
  volume  = {173},
  number  = {1},
  pages   = {18--36},
  year    = {2018},
  doi     = {10.1007/s10955-018-2114-0}
}

@article{Kahle2009,
  author  = {Kahle, Matthew},
  title   = {Topology of random clique complexes},
  journal = {Discrete Mathematics},
  volume  = {309},
  number  = {6},
  pages   = {1658--1671},
  year    = {2009},
  doi ={10.1016/j.disc.2008.02.037}
}

@article{BobrowskiKahle2018,
  author  = {Bobrowski, Omer and Kahle, Matthew},
  title   = {Topology of random geometric complexes: a survey},
  journal = {Journal of Applied and Computational Topology},
  volume  = {1},
  number  = {3--4},
  pages   = {331--364},
  year    = {2018},
  doi     = {10.1007/s41468-017-0010-0}
}
\end{document}